\theoremstyle{definition}
\newtheorem{Definition}{Definition}
\newtheorem{Proposition}{Proposition}
\newtheorem{Lemma}{Lemma}
\newtheorem{Corollary}{Corollary}
\newtheorem{Theorem}{Theorem}
\let\bs\boldsymbol
\def\xx{{\bs x}}
\def\aa{{\bs a}}
\def\uu{{\bs u}}
\def\dd{{\bs d}}
\def\ww{{\bs w}}
\def\zz{{\bs z}}
\def\tt{{\bs t}}
\def\zzeta{{\bs \zeta}}
\def\mmu{{\bs \mu}}
\def\sym{\text{Sym}}
\newcommand{\fC}{\mathfrak{C}}
\newcommand{\bA}{\mathsf{A}}
\newcommand{\bT}{\mathsf{T}}
\newcommand{\bK}{\mathsf{K}}
\newcommand{\Lie}{\mathrm{Lie}}
\newcommand{\Pic}{\mathrm{Pic}}
\title{3d mirror symmetry of the cotangent bundle of the full flag variety}
\author{Hunter Dinkins}
\date{}
\begin{document}

\maketitle
\begin{abstract}
Aganagic and Okounkov proved that the elliptic stable envelope provides the pole cancellation matrix for the enumerative invariants of quiver varieties known as vertex functions. This transforms a basis of a system of $q$-difference equations holomorphic in $\zz$ with poles in $\aa$ to a basis of solutions holomorphic in $\aa$ with poles in $\zz$. The resulting functions are expected to be the vertex functions of the 3d mirror dual variety. In this paper, we prove that for the cotangent bundle of the full flag variety, the functions obtained in this way recover the vertex functions for the same variety under an exchange of the parameters $\aa \leftrightarrow \zz$. As a corollary of this, we deduce the expected 3d mirror relationship for the elliptic stable envelope.
\end{abstract}

\section{Introduction}
The goal of this paper is to discuss a specific example of 3d mirror symmetry (sometimes also called symplectic duality) from the perspective of enumerative geometry. 3d mirror symmetry originated in 3-dimensional supersymmetric gauge theories. The low energy dynamics of such a theory are governed by the geometry of the moduli space of vacua. The Higgs and Coloumb branches are algebraic varieties that give two components of the moduli space of vacua, see \cite{coul2}, \cite{nakcoul}. 3d mirror symmetry expects deep relationships between the geometry of the Higgs branch of a theory and the Coloumb branch of the dual theory.

One such expectation deals with the problem of curve counting. In many cases, the Higgs branch can be constructed as a Nakajima quiver variety $X$. Using the enumerative theory of stable quasimaps to a geometric invariant theory quotient from \cite{qm}, one can package the $K$-theoretic equivariant count of genus 0 curves in $X$ into an object known as the \textit{vertex function}, see \cite{pcmilect} Section 7. The vertex function is a power series
$$
V(\zz) \in K_{\bT}(X)[[\zz]]
$$
where $\bT$ is a torus that acts on $X$ and $\zz$ is a collection of variables inserted to keep track of the degrees of quasimaps. The variables $\zz$ are usually referred to as the K\"ahler parameters of $X$. In the case where $X$ has finitely many $\bT$-fixed points, one can restrict the vertex function to the fixed points to obtain a collection of vertex functions
$$
V_p(\aa,\zz) =\sum_{\dd} c_{\dd}(\aa) \zz^{\dd} \in K_{\bT}(pt)[[\zz]], \quad p \in X^{\bT}
$$
where $\aa$ stands for the equivariant parameters of the torus $\bT$. The vertex functions are known to satisfy certain $q$-difference equations in both the variables $\aa$ and $\zz$. In some cases, the Coloumb branch $X^{!}$ can also be constructed as a Nakajima quiver variety, in which case one can ask: \textit{what is the relationship between the vertex functions and difference equations of $X$ and $X^{!}$}? 

Another side of the story involves elliptic stable envelopes. Elliptic stable envelopes were defined in \cite{AOElliptic} for Nakajima quiver varieties in order to identify the monodromy of the difference equations. Let $\textbf{Stab}(p)$ be the elliptic stable envelope of $p\in X^{\bT}$ for a certain choice of polarization and chamber (to be explained below), see \cite{AOElliptic} and \cite{SmirnovElliptic} for precise definitions. The elliptic stable envelope $\textbf{Stab}(p)$ is a section of a certain line bundle over the extended elliptic cohomology scheme $E_{\bT}(X)$. If $X^{\bT}$ is finite, then one can restrict these sections to obtain a matrix
$$
\textbf{Stab}_{p,r}:=\textbf{Stab}(p)|_{r}
$$
The entries in this matrix are certain combinations of theta functions in the parameters $\aa$ and $\zz$. 

The vertex functions $V_p(\aa,\zz)$ are known to be holomorphic in $\zz$ with poles in $\aa$. In \cite{AOElliptic}, it is proven that, for the correct choice of polarization and chamber, the matrix of restrictions of the elliptic stable envelopes provides the pole cancellation matrix of $V_p(\aa,\zz)$ and gives another set of solutions to the same system of difference equations. In other words,
\begin{equation}\label{intro1}
B_p:=\sum_{r \in X^{\bT}} \textbf{Stab}_{p,r} V_r(\aa,\zz)
\end{equation}
has no poles in a certain neighborhood of a point on a toric compactification of $\bA$, where $\bA$ is the subtorus of $\bT$ that preserves the symplectic form of $X$. Hence we can expand $B_p$ as a power series
$$
B_p=\sum_{\dd} \gamma_{\dd}(\zz) \aa^{\dd}
$$
One expectation of 3d mirror symmetry is that, under the correct normalization, (\ref{intro1}) is the vertex function of $X^{!}$. For such a statement to make sense, we must have a bijection on the fixed point sets
$$
\mathsf{b}: X^{\bT} \longleftrightarrow {X^{!}}^{\bT{!}}
$$
as well as a way of exchanging equivariant parameters with the K\"ahler parameters:
\begin{align*}
\zz &\longleftrightarrow \aa^{!} \\
    \aa &\longleftrightarrow \zz^{!}
\end{align*}

This result has been proven in the case where $X$ is a hypertoric variety in \cite{mstoric}, as well as for the vertex function at a particular fixed point for the cotangent bundle of the Grassmannian in \cite{dinkms1}. As far as we are aware, this has not been proven for any other examples.

In this paper, we study the case when $X$ is the cotangent bundle of the full flag variety in $\mathbb{C}^n$. In \cite{MirSym2}, the authors study 3d mirror symmetry from the perspective of elliptic stable envelopes and prove that in this case
$$
X^{!} \cong X
$$
i.e. the cotangent bundle of the full flag variety is 3d mirror self-symmetric. A crucial part of their argument involves the elliptic weight functions introduced in \cite{RTV}. In \cite{MirSym2}, certain combinatorial properties of the weight functions are interpreted in light of 3d mirror symmetry as a relationship between the elliptic stable envelopes of $X$ and $X^{!}$. This example has also been studied from a physical perspective in \cite{CDZ}, where 3d mirror symmetry is studied in various limits.

Here we revisit the identification $X^{!}\cong X$ from the perspective of vertex functions as explained above. Fixed points on $X$ are naturally indexed by permutations $I=(I_1,\ldots,I_n)$ of $n$. The bijection on fixed points is given by 
$$
\mathsf{b}: I \longleftrightarrow I^{-1}=:I^{!}
$$
We define the identification $\kappa$ of the equivariant and K\"ahler parameters in (\ref{kap}) below. Then our main result is

\begin{Theorem}[Theorem \ref{mainthm}]
Let $X$ be the cotangent bundle of the full flag variety and let $\left(\textbf{Stab}_{I,J}\right)_{I,J\in X^{\bT}}$ be the restriction matrix of the elliptic stable envelope for the choice of polarization and chamber given by (\ref{pol}) and (\ref{chamb}), normalized as in Definition \ref{boldstab}. Then
$$
\widetilde{V}^{!}_{I^{!}}(\aa^{!},\zz^{!})=\kappa\left(\sum_{J \in X^{\bT}} \textbf{Stab}_{I,J} \widetilde{V}_{J}(\aa,\zz) \right)
$$
where $\widetilde{V}_J(\aa,\zz)$ is the normalization of the vertex function given by Definition \ref{vnorm}.
\end{Theorem}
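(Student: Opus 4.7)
The plan is to reduce the statement to an identity of integral (Jackson-type) representations of the vertex function, where the $\aa\leftrightarrow\zz$ symmetry becomes a combinatorial symmetry of the elliptic weight functions. First I would write down, for each fixed point $J\in X^{\bT}$, an explicit Jackson-integral formula
\[
\widetilde V_{J}(\aa,\zz)=\oint \Phi(\xx,\aa,\zz)\, W_{J}(\xx,\aa,\zz)\,d_{q}\xx,
\]
where $\Phi$ is a common ``measure'' built from the tautological bundles on the quiver variety and $W_{J}$ is the elliptic weight function of $I \neq J$ \cite{RTV}-type associated to the permutation $J$. The choice of contour and of the measure $\Phi$ has to be made precisely so that residues at the $\bT$-fixed points reproduce the standard quasimap vertex; this is essentially the content of Definition~\ref{vnorm}, and I would verify it by comparing poles in $\aa$ with the known expressions $c_{\dd}(\aa)$.

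Next, I would substitute this integral representation into the right-hand side of the theorem and commute the finite sum over $J$ with the integral:
\[
\sum_{J} \mathbf{Stab}_{I,J}\,\widetilde V_{J}(\aa,\zz)
=\oint\Phi(\xx,\aa,\zz)\,\Bigl(\sum_{J} \mathbf{Stab}_{I,J}\,W_{J}(\xx,\aa,\zz)\Bigr)\,d_{q}\xx.
\]
The central step is to identify the bracketed sum. Using the triangularity and normalization of the elliptic stable envelope together with the fact that the restriction $\mathbf{Stab}_{I,J}$ is itself essentially the weight function $W_{I}$ evaluated at the fixed point coordinates of $J$ (this is the RTV realization used in \cite{MirSym2}), the sum telescopes, via an orthogonality-type identity for weight functions, into the single ``mirror'' weight function $W^{!}_{I^{!}}(\xx,\zz,\aa)$ associated to the inverse permutation $I^{-1}$, with the roles of equivariant and K\"ahler variables swapped. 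This is the key manifestation of the self-symmetry $X^{!}\cong X$ at the level of weight functions; in concrete terms, it is a bilinear/completeness identity for elliptic weight functions of the type established in \cite{MirSym2}.

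Once the sum inside the integral has been replaced by $W^{!}_{I^{!}}(\xx,\zz,\aa)$, I would check that the measure $\Phi(\xx,\aa,\zz)$ is invariant under $\aa\leftrightarrow\zz$ up to exactly the normalization discrepancy tracked by $\kappa$ and by the tilde-normalization of Definition~\ref{vnorm}. The remaining task is a contour deformation: the original contour is chosen to expand residues in $\aa$, producing a power series in $\zz$; after the parameter swap, the same contour is now positioned to expand residues in $\zz^{!}=\aa$, producing a power series in $\aa^{!}=\zz$. By the Aganagic--Okounkov pole-cancellation theorem recalled in the introduction, the left-hand side is already known to be holomorphic in a neighborhood of the right toric boundary point, so the deformation is allowed and the resulting integral is exactly $\widetilde V^{!}_{I^{!}}(\aa^{!},\zz^{!})$.

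The main obstacle I expect is the precise combinatorial identification of $\sum_{J}\mathbf{Stab}_{I,J}\,W_{J}$ with the dual weight function $W^{!}_{I^{!}}$, which amounts to upgrading the fixed-point symmetry of elliptic stable envelopes from \cite{MirSym2} to a ``global'' identity of weight functions in the integration variables $\xx$, not just their specializations at fixed points. Once this is in place, matching the bijection $I\mapsto I^{-1}$, the explicit formula for $\kappa$ from (\ref{kap}), and the various prefactors in the definition of $\widetilde V$ should be a careful but essentially routine bookkeeping exercise; the genuine input is the weight-function self-duality together with the integral representation of the vertex function.
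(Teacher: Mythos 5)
Your proposal takes a genuinely different route from the paper, but its central step is a gap rather than a proof. The paper never uses an integral representation: it observes that both sides of the identity are joint eigenfunctions of the Macdonald operators $D_r(\uu;q,q/\hbar)$ (Propositions \ref{dopzeta} and \ref{dopu} together with the quasiperiodicity of $\textbf{Stab}_{I,J}$ in Lemma \ref{lem2}), invokes the uniqueness result of \cite{NSmac} for solutions in $\kappa^{-1}(\alpha_I^{!})\,\mathbb{C}_{q,\hbar}(\zz)[[\aa]]$ with prescribed leading coefficient, and then computes the leading coefficient of the right-hand side in the limit $\aa\to 0$ using the Aganagic--Okounkov pole cancellation theorem, the triangularity $I\prec J$, and the diagonal values of the stable envelope (Lemmas \ref{lem4} and \ref{lt}). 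No global identity of weight functions in the off-shell variables $\xx$ is ever needed.

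The step you flag as the ``main obstacle'' is in fact the whole difficulty, and as stated it does not make sense. The sum $\sum_{J}\mathbf{Stab}_{I,J}\,W_{J}(\xx,\aa,\zz)$ has coefficients independent of $\xx$, so it is again an elliptic cohomology class on $X$ in the variables $\xx$; it cannot literally equal a weight function of $X^{!}$, whose natural arguments are the Chern roots of the tautological bundles of $X^{!}$, without specifying a highly nontrivial identification of these variables. The result of \cite{MirSym2} is a statement about restrictions to fixed points (equivalently, about the mother function on $X\times X^{!}$ evaluated at fixed-point data), not a bilinear or completeness identity with $\xx$ left free, and this paper derives that statement as Corollary \ref{stabms} \emph{from} the theorem rather than having it available as input. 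In addition, the Jackson-integral representation of $\widetilde{V}_J$ with a contour whose residues reproduce the quasimap vertex is itself a substantial unproved input (Definition \ref{vnorm} is only a normalization by $\alpha_I$ and $\Phi((q-\hbar)T^{1/2}_IX)$, not an integral formula), and the final ``contour deformation'' would require tracking which poles in $\aa$ and $\zz$ are crossed, which the pole-cancellation theorem alone does not supply. If you want to salvage the strategy, the honest version of your key step is precisely the leading-coefficient computation the paper performs; I would recommend reorganizing the argument around the difference equations and uniqueness rather than around an integral kernel.
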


In other words, we prove that the elliptic stable envelope provides the transition matrix between the vertex functions of $X$ and $X^{!}\cong X$, where the roles of the equivariant and K\"ahler parameters are switched for $X^{!}$. 

Our proof relies on \cite{tRSKor} and \cite{KorZet}, in which the difference equations satisfied by the vertex functions in $\aa$ and $\zz$ are identified via the Macdonald operators (see Propositions \ref{dopzeta} and \ref{dopu} below). In \cite{NSmac}, the authors prove that solutions of these equations of a particular form are uniquely determined by their leading coefficient. In light of this, along with the results from \cite{AOElliptic}, in order to identify 
$$
\kappa\left(\sum_{J \in X^{\bT}} \textbf{Stab}_{I,J} \widetilde{V}_J(\aa,\zz) \right)
$$
with the vertex functions of $X^{!}$, it is sufficient to calculate the leading coefficient. This is done by using the known diagonal and quasiperiodicity properties of the elliptic stable envelope. Along the way, to connect our results with those of \cite{MirSym2}, we define the elliptic weight functions and identify precisely which choices of polarization and chamber are needed to relate the weight functions to the elliptic stable envelopes. 

As a corollary of our main Theorem, we obtain a simple proof of the main result of \cite{MirSym2}, which says that the inverse of the restriction matrix of the elliptic stable envelope of $X$ is equal to that of $X^{!}$ after applying $\kappa$ and appropriately permuting the rows and columns.

\subsection{Acknowledgements}
We would like to thank Andrey Smirnov for suggesting this project and for his guidance throughout.

\section{Description as a quiver variety}
\subsection{Definitions}
We construct the cotangent bundle of the full flag variety in $\mathbb{C}^n$ as a Nakajima quiver variety. Fix $n\in\mathbb{N}$. We consider the quiver with $n-1$ vertices, with dimension
$$
\mathsf{v}=(\mathsf{v}_1,\mathsf{v}_{2},\ldots,\mathsf{v}_{n-1})=(1,2,\ldots,n-1)
$$
and one framing of dimension of $n$ at vertex $n-1$.

\begin{figure}[ht]
\centering
\begin{tikzpicture}[roundnode/.style={circle,fill,inner sep=2.5pt},squarednode/.style={rectangle,fill,inner sep=3pt}] 
\node[squarednode,label=below:{$n$}](F1) at (6,-1.5){};
\node[roundnode,label=above:{1}](V1) at (1.5,0){};
\node[roundnode,label=above:{2}](V2) at (3,0){};
\node(V3) at (4.5,0){\ldots};
\node[roundnode,label=above:{$n-1$}](V4) at (6,0){};
\draw[thick, ->] (V4) -- (F1);
\draw[thick, ->] (V1) -- (V2);
\draw[thick, ->] (V2) -- (V3);
\draw[thick, ->] (V3) -- (V4);
\end{tikzpicture}
\caption{The quiver data for the cotangent bundle of the flag variety.} \label{dual}
\end{figure}
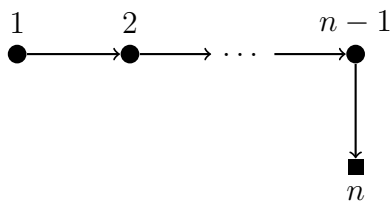
The Nakajima quiver variety associated to this data is a geometric invariant theory quotient with respect to the group
$$
G=\prod_{i=1}^{n-1} GL(V_i)
$$
where $V_i$ is a vector space of dimension $\mathsf{v}_i$. As the stability condition, we choose the $G$-character
$$
\theta: G \to \mathbb{C}^{\times}, \qquad (g_i)_i \mapsto \prod_{i=1}^{n-1} \det g_i
$$
By \cite{GinzburgLectures} Proposition 5.1.5, the $\theta$-semistable points consist of maps so that $V_{i} \to V_{i-1}$ are surjective for $i\in \{1,\ldots,n\}$, where $V_n=W$ is the framing vector space. Thus the quiver variety $X$ is the cotangent bundle of the full flag variety.

\subsection{Torus action and fixed points}
The maximal torus $\widetilde{\bA}$ of $GL(W)$ acts on $X$. After choosing a basis $\{e_i\}_{i \in \{1,\ldots,n\}}$ so that $W\cong \mathbb{C}^n$, this action is induced by the action of $\widetilde{\bA} \cong (\mathbb{C}^{\times})^n$ on $W$ given by
$$
(u_1,\ldots,u_n) \cdot (x_1,\ldots,x_n) = (u_1^{-1} x_1,\ldots,u_n^{-1} x_n)
$$
An additional torus $\mathbb{C}^{\times}_{\hbar}$ acts on $X$ by scaling the cotangent data, which is given by the maps $V_i \to V_{i-1}$ for $i\in \{1,\ldots,n\}$, by $\hbar^{-1}$. 

We define 
$$
\widetilde{\bT}:=\widetilde{\bA} \times \mathbb{C}^{\times}_{\hbar}
$$ The torus $\widetilde{\bA} \subset \widetilde{\bT}$ preserves the symplectic form of $X$ and $\widetilde{\bT}/\widetilde{\bA}$ scales it with character $\hbar$. Also, the action of $\widetilde{\bA}$ is not faithful, but has kernel $(u,u,\ldots,u)\in \widetilde{\bA}$. We write $\bA$ for the quotient by the diagonal subgroup, and denote 
$$
\bT=\bA \times \mathbb{C}^{\times}_{\hbar}
$$
Coordinates on $\bA$ will be denoted by $a_i=u_i/u_{i+1}$.

The $\bT$-fixed points on $X$ are given by data for which the vector spaces $V_i$ are spanned by coordinate vectors of $W$. Then $\ker( V_i \to V_{i-1})=\text{Span}_{\mathbb{C}}\{e_{I_i}\}$ where we assume that $V_0=0$, and the tuple $(I_1,\ldots,I_n)$ defines a permutation of $n$.

The vector spaces $V_i$ and $W$ descend to vector bundles $\mathcal{V}_i$ and $\mathcal{W}$ on $X$. It is known that $\Pic(X)\cong \mathbb{Z}^{n-1}$ is generated by the tautological line bundles
\begin{equation}\label{lb}
\mathcal{L}_i:=\det \mathcal{V}_i
\end{equation}
see \cite{kirv}. We define the K\"ahler torus 
$$
\bK:=\Pic(X) \otimes \mathbb{C}^{\times}
$$
and write $(z_1,\ldots, z_{n-1})$ for the coordinates on it. These coordinates are usually called K\"ahler parameters.

\subsection{Polarization and tangent space}
Elliptic stable envelopes were defined for Nakajima quiver varieties in \cite{AOElliptic}. The definition presented there depends on a choice of a polarization. The choice of polarization controls the $q$-periodicity of the stable envelopes, see \cite{AOElliptic} Section 3.3.

A polarization of $X$ is the choice of a $K$-theory class $T^{1/2}X$ so that the tangent bundle decomposes as
$$
TX= T^{1/2}X+\hbar^{-1} (T^{1/2}X)^{\vee} \in K_{\bT}(X)
$$
There is a natural choice of polarization for $X$, given in terms of the tautological bundles $\mathcal{V}_i$, $i=1,\ldots,n-1$ and $\mathcal{V}_n=\mathcal{W}$ by
\begin{equation}\label{pol}
T^{1/2}X = \sum_{i=1}^{n-1} \mathcal{V}_i^{\vee} \otimes \mathcal{V}_{i+1} -\sum_{i=1}^{n-1} \mathcal{V}_{i}^{\vee} \otimes \mathcal{V}_{i} \in K_{\bT}(X)
\end{equation}
In terms of the Chern roots $x^{(i)}_1,\ldots x^{(i)}_i$ of $\mathcal{V}_i$, this is given by
$$
T^{1/2}X = \sum_{i=1}^{n-1} \left( \sum_{j=1}^{i} \frac{1}{x^{(i)}_j}\right) \left( \sum_{k=1}^{i+1} x^{(i+1)}_k\right)  - \left( \sum_{j=1}^{i} \frac{1}{x^{(i)}_j}\right) \left( \sum_{k=1}^{i} x^{(i)}_k \right) \in K_{\bT}(X)
$$
At the fixed point given by a permutation $I$ of $\{1,2,\ldots, n\}$, the tangent space can be calculated by substituting for the Chern roots $x^{(i)}_j = u_{I_j}$. So in terms of the coordinates on $\widetilde{A}$,
\begin{align*}
T^{1/2}_I X &= \sum_{i=1}^{n-1} \sum_{j=1}^{i} \sum_{k=1}^{i+1} \frac{u_{I_k}}{u_{I_j}}  -  \sum_{j=1}^{i} \sum_{k=1}^{i} \frac{u_{I_k}}{u_{I_j}}  \\
&= \sum_{1\leq j < k \leq n} \frac{u_{I_k}}{u_{I_j}} \in K_{\bT}(pt)
\end{align*}
Thus
$$
T_I X =  \sum_{1\leq j < k \leq n} \frac{u_{I_k}}{u_{I_j}} + \hbar^{-1}  \sum_{1\leq j < k \leq n} \frac{u_{I_j}}{u_{I_k}} \in K_{\bT}(pt)
$$

\subsection{Chamber}
The elliptic stable envelope depends additionally on the choice of a chamber in $\Lie_{\mathbb{R}}\bA = \text{cochar}(\bA) \otimes_{\mathbb{Z}} \mathbb{R}$, where $\text{cochar}(\bA)$ denotes the cocharacter lattice of $\bA$. In the case of a variety with finitely many fixed points, the chamber controls the diagonal entries of the matrix of restrictions of the elliptic stable envelope, see \cite{AOElliptic} Section 3.3 and \cite{SmirnovElliptic} Section 2.13.

A chamber is a choice of connected component of
$$
\Lie_{\mathbb{R}}\bA -\bigcup_{w}\{\sigma \in \Lie_{\mathbb{R}}\bA \mid \langle \sigma, w \rangle=0\}
$$
where the union is taken over all $\bA$-weights of the tangent spaces at the fixed points and $\langle \cdot, \cdot \rangle$ denotes the natural pairing on characters and cocharacters. A choice of generic cocharacter $\sigma$ of $\bA$ gives a chamber $\mathfrak{C}$, and the dependence of the chamber on the cocharacter is locally constant. The tangent space at a fixed point decomposes into a direct sum of $\bA$-weight spaces
$$
T_I X= \bigoplus_{w\in Hom(\bA,\mathbb{C}^{\times})} V_I(w)
$$
A choice of chamber given by a cocharacter $\sigma$ decomposes the tangent space at a fixed point $I$ into attracting and repelling directions:
$$
T_I X= N^{+}_I+N^{-}_I
$$
where
\begin{align*}
    N_I^{+} &= \bigoplus_{\substack{w \\ \langle \sigma, w \rangle}>0} V_I(w) \\
     N_I^{-} &= \bigoplus_{\substack{w \\ \langle \sigma, w \rangle}<0} V_I(w) 
\end{align*}
Since the fixed point set is finite, every direction is either attracting or repelling. 

\begin{Definition}\label{ind}
The index bundle with respect to $\mathfrak{C}$ at a fixed point $I$, written $\text{ind}^{\mathfrak{C}}_I$ is the attracting part of the polarization restricted to $I$.
\end{Definition}

In our case of the cotangent bundle of the full flag variety, we fix once and for all the cocharacter
\begin{align}\label{chamb}
\sigma: u \mapsto (u^{-1},u^{-2},\ldots,u^{-n}), \qquad u \in \mathbb{C}^{\times}
\end{align}
and denote the corresponding chamber as $\mathfrak{C}$. With respect to this chamber, attracting weights look like $u_i/u_j$ where $i<j$, or equivalently, like monomials with positive powers in $a_i$. Explicitly, we have
$$
T_I X= N^{+}_I+N^{-}_I
$$
where
\begin{align*}
    N_I^- &=  \sum_{\substack{1\leq j < k \leq n \\ I_k > I_j}} \frac{u_{I_k}}{u_{I_j}} + \hbar^{-1}\sum_{\substack{1\leq j < k \leq n \\ I_k < I_j}} \frac{u_{I_j}}{u_{I_k}} \\
    N_I^+ &=  \sum_{\substack{1\leq j < k \leq n \\ I_k < I_j}} \frac{u_{I_k}}{u_{I_j}} + \hbar^{-1}\sum_{\substack{1\leq j < k \leq n \\ I_k > I_j}} \frac{u_{I_j}}{u_{I_k}}
\end{align*}

Given a permutation $I=(I_1,\ldots,I_n)$ of $n$, we define the ordered indices $i^{(k)}_1,\ldots,i^{(k)}_k$ so that
$$
\{i^{(k)}_1 < \ldots < i^{(k)}_k\}= \{I_1,\ldots,I_k\}
$$

\begin{Definition}\label{bruhat}
For permutations $I$ and $J$ with ordered indices $i^{(k)}_m$ and $j^{(k)}_m$, we define
\begin{equation}\label{bruhateq}
I\prec J \iff i^{(k)}_m<j^{(k)}_m \text{   for all   } k=1,\ldots,n-1 \text{  and  } m=1,\ldots,k
\end{equation}
In what follows, we will also denote by $\prec$ an arbitrary refinement of this partial order to a total order.
\end{Definition}

\subsection{Vertex functions}
The \textit{vertex function} is an important enumerative invariant of a Nakajima quiver variety $X$. The vertex function is defined as the generating function for a $K$-theoretic equivariant count of quasimaps from $\mathbb{P}^1$ to $X$. When restricted to a fixed point, the vertex function gives a power series in the K\"ahler parameters, with coefficients given by rational functions in the equivariant parameters and $q$, the coordinate on the torus $\mathbb{C}^\times_q$ which acts on the domain $\mathbb{P}^1$ of quasimaps. In the setting of elliptic cohomology, $q$ plays the role of the modular parameter of the elliptic curve $\mathbb{C}^{\times}/q^{\mathbb{Z}}$, and we assume that $0<|q|<1$.

A central property of the vertex functions is that, when normalized properly, they give a basis of solutions to a system of $q$-difference equations in both the equivariant and K\"ahler parameters, see \cite{OS}. The vertex functions are holomorphic in the K\"ahler parameters with poles in the equivariant parameters. Rather than review all the necessary theory here, we refer the reader to \cite{pcmilect} Section 7, \cite{qm}, and \cite{OkBethe} Section 1. 

For the specific case of the cotangent bundle of the full flag variety, the vertex functions have been studied in \cite{tRSKor} and \cite{KorZet}. In particular, explicit formulas were given for the vertex functions and the systems of $q$-difference equations solved by the vertex functions were identified. We review these results here.

For an indeterminate $x$, we define
$$
\varphi(x):=\prod_{i=0}^{\infty} (1-x q^i)
$$
The $q$-Pochammer symbol is given by
$$
(x)_d:=\frac{\varphi(x)}{\varphi(x q^d)}
$$
and the odd Jacobi theta function is defined by 
$$
\vartheta(x):=(x^{1/2}-x^{-1/2}) \varphi(qx) \varphi(q/x)
$$
This function satisfies 
\begin{equation}\label{theta}
\vartheta(1/x)=-\vartheta(x) \quad \text{and} \quad \vartheta(qx)= -\frac{1}{q^{1/2} x} \vartheta(x)
\end{equation}
For a vector bundle $\mathcal{V}$ on $X$, written in terms of its Chern roots as $\mathcal{V}=x_1+\ldots + x_r$, we define
$$
\Phi(\mathcal{V}):=\prod_{i=1}^{r} \varphi(x_i) \quad \text{and} \quad \Theta(\mathcal{V}):=\prod_{i=1}^{r} \vartheta(x_i)
$$
We extend this to $K_T(X)$ by
$$
\Phi(-\mathcal{V}):=\prod_{i=1}^{r} \frac{1}{\varphi(x_i)} \quad \text{and} \quad \Theta(-\mathcal{V}):=\prod_{i=1}^{r} \frac{1}{\vartheta(x_i)}
$$

The function $\Theta(\mathcal{V})$ pulls back under the elliptic Chern class map to a section of the Thom class of the bundle $\mathcal{V}$ over the elliptic cohomology scheme of $X$, see \cite{AOElliptic} Section 2.6, \cite{ell1} Section 6.1, and \cite{SmirnovElliptic} Section 2.6.

Quasimaps from $\mathbb{P}^1$ to $X$ come equipped with a notion of degree. The quasimap moduli space from which the vertex function of $X$ is defined is the disjoint union of quasimap moduli spaces for each degree. The choice of stability condition for the quiver variety determines the set of degrees for which the moduli spaces are nonempty, see \cite{pcmilect} Corollary 7.2.15. For the cotangent bundle to the full flag variety, the following definition gives the set of such degrees, see \cite{KorZet} Section 3.
\begin{Definition}
We define $C\subset \mathbb{Z}\times \mathbb{Z}^2\times \ldots \times \mathbb{Z}^{n-1}$ as the collection of integers $d_{i,j}$ where $i\in \{1,\ldots,n-1\}$ and $j\in\{1,\ldots,i\}$ so that
\begin{itemize}
    \item $d_{i,j}\geq 0$ for all $i,j$.
    \item For each $i\in\{1,\ldots,n-2\}$, there exists $\{j_1,\ldots,j_i\}\subset \{1,\ldots,i+1\}$ so that $d_{i,k}\geq d_{i+1,j_k}$ for all $k$.
\end{itemize}
\end{Definition}



\begin{Theorem}[\cite{KorZet} Theorem 3.1]\label{ver}
The vertex function of the cotangent bundle of the full flag variety, restricted to a fixed point $I$, is given by the following power series:
\begin{multline*}
V_I(\uu,\zz) = \sum_{d_{i,j} \in C} \zz^{\dd} \prod_{i=1}^{n-2} \prod_{j=1}^i \prod_{k=1}^{i+1}\frac{\left(\hbar \frac{u_{I_k}}{u_{I_j}}\right)_{d_{i,j}-d_{i+1,k}}}{\left(q \frac{u_{I_k}}{u_{I_j}}\right)_{d_{i,j}-d_{i+1,k}}} \\ \prod_{i=1}^{n-1}  \prod_{j,k=1}^{i}\frac{\left(q \frac{u_{I_k}}{u_{I_j}}\right)_{d_{i,j}-d_{i,k}}}{\left(\hbar \frac{u_{I_k}}{u_{I_j}}\right)_{d_{i,j}-d_{i,k}}} 
\prod_{i=1}^n \prod_{j=1}^{n-1} \frac{\left(\hbar \frac{u_i}{u_{I_j}} \right)_{d_{n-1,j}}}{\left(q \frac{u_i}{u_{I_j}} \right)_{d_{n-1,j}}}
\end{multline*}

where $\zz^{\dd}=\prod_{i=1}^{n-1}\prod_{j=1}^i z_i^{d_{i,j}}$. 
\end{Theorem}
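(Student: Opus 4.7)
The plan is to prove the formula by equivariant virtual $K$-theoretic localization on the moduli space of stable quasimaps from $\mathbb{P}^1$ to $X$. Recall that the vertex function restricted to the fixed point $I$ is
$$
V_I(\uu,\zz) \;=\; \sum_{\dd} \zz^{\dd}\, \mathrm{ev}_{\infty,*}\!\bigl(\widehat{\mathcal{O}}_{\mathrm{vir}}\bigr)\bigr|_{I},
$$
where $\mathrm{ev}_\infty$ is evaluation at a nonsingular point of $\mathbb{P}^1$, the sum is over effective degrees, and $\widehat{\mathcal{O}}_{\mathrm{vir}}$ is the symmetrized virtual structure sheaf normalized by the polarization (\ref{pol}). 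Localization with respect to the full torus $\bT\times\mathbb{C}^\times_q$ reduces each coefficient to a sum over $\bT$-fixed components of the quasimap moduli landing at $I$, weighted by the reciprocal of the $\widehat{a}$-class of the virtual tangent bundle.

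The first step is to classify these fixed components. A quasimap to $X$ is the data of equivariant vector bundles $\mathcal{V}_1,\dots,\mathcal{V}_{n-1}$ on $\mathbb{P}^1$ with $\mathrm{rk}\,\mathcal{V}_i=i$, the trivial framing $\mathcal{W}\cong \bigoplus_{i=1}^n u_i\mathcal{O}$, and equivariant maps $\mathcal{V}_i\to\mathcal{V}_{i-1}$ and $\mathcal{V}_{i-1}\to\hbar\mathcal{V}_i$ satisfying the moment-map relation and the nonsingularity condition at $\infty$. Any torus-equivariant bundle on $\mathbb{P}^1$ splits as a direct sum of equivariant line bundles; the requirement that the fiber of $\mathcal{V}_i$ at $\infty$ realizes the fixed point $I$ pins down the equivariant weights to be $u_{I_1},\dots,u_{I_i}$, giving
$$
\mathcal{V}_i \;\cong\; \bigoplus_{j=1}^{i} u_{I_j}\,\mathcal{O}(d_{i,j}).
$$
The effectivity $d_{i,j}\geq 0$ and the generic surjectivity of the chain maps $\mathcal{V}_{i+1}\to\mathcal{V}_i$ translate into the combinatorial requirement $\dd=(d_{i,j})\in C$ from the definition preceding the theorem.

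The second step is to evaluate the contribution of each fixed component. From the polarization (\ref{pol}), the virtual tangent bundle at $(I,\dd)$ is a signed sum of equivariant Euler characteristics on $\mathbb{P}^1$: the summand $u_{I_k}u_{I_j}^{-1}\mathcal{O}(d_{i,j}-d_{i+1,k})$ coming from $\mathcal{V}_i^\vee\otimes\mathcal{V}_{i+1}$ for $i\leq n-2$, the summand $u_{I_k}u_{I_j}^{-1}\mathcal{O}(d_{i,j}-d_{i,k})$ subtracted from $\mathcal{V}_i^\vee\otimes\mathcal{V}_i$, and the framing block $u_i u_{I_j}^{-1}\mathcal{O}(d_{n-1,j})$ from $\mathcal{V}_{n-1}^\vee\otimes\mathcal{W}$ with $d_{n,k}=0$ and the weights $u_1,\dots,u_n$ of $\mathcal{W}$, together with the $\hbar^{-1}$-twisted duals. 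Using the standard evaluation of the $\widehat{a}$-class of $\chi(\mathbb{P}^1,w\mathcal{O}(d))$ (which combines the weight count on $H^0$ and $H^1$ with the polarization twist into a ratio of Pochhammer symbols of the form $(\hbar w)_d/(qw)_d$), the three product blocks of the statement are assembled term by term. The K\"ahler weighting $\zz^{\dd}$ enters through the tautological line bundle degrees $\deg\det\mathcal{V}_i=\sum_j d_{i,j}$ paired with $z_i$.

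The main technical obstacle is the first step: verifying that $C$ is the correct index set of $\bT$-fixed components and carefully tracking the equivariant weights on $\mathbb{P}^1$. Specifically, one must show that a $\bT$-fixed stable quasimap with prescribed $I$ and $\dd$ exists precisely when the combinatorial matching of the definition of $C$ can be realized compatibly with the splitting of each $\mathcal{V}_i$ into its equivariant line summands, and that the stability condition at $\infty$ rules out unwanted base points. Once this combinatorial identification is established, the remaining contributions are a routine Pochhammer-symbol calculation of Euler characteristics on $\mathbb{P}^1$, and summing over $\dd\in C$ produces the stated series.
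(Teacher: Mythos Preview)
The paper does not give its own proof of this theorem: it is quoted verbatim as \cite{KorZet} Theorem~3.1 and used as input. So there is no argument in the present paper to compare your proposal against.

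That said, your outline is the standard localization computation that underlies such formulas (as in \cite{pcmilect} Section~7 and \cite{KorZet}), and the structure is right: classify $\bT\times\mathbb{C}^\times_q$-fixed quasimaps landing at $I$ by the splitting types $\mathcal{V}_i\cong\bigoplus_j u_{I_j}\mathcal{O}(d_{i,j})$, then read off the virtual tangent contribution from the polarization as a product of $q$-Pochhammer ratios. Two places where your sketch is looser than a proof would need to be: first, the claim that the relevant $d_{i,j}$ are exactly the set $C$ is not just ``effectivity plus generic surjectivity''---the nonnegativity of each individual $d_{i,j}$ does not follow from effectivity of the total degree, and the matching condition in the definition of $C$ comes from the existence of nonzero equivariant morphisms $u_{I_j}\mathcal{O}(d_{i+1,j_k})\to u_{I_j}\mathcal{O}(d_{i,k})$ inside the chain map (together with the vanishing of the localization contribution when such maps cannot be made surjective at $\infty$). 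Second, the passage from $\chi(\mathbb{P}^1,w\mathcal{O}(d))$ to the displayed Pochhammer ratios requires tracking the symmetrized virtual structure sheaf and the polarization normalization carefully; the shape $(\hbar w)_d/(qw)_d$ you quote is correct for this polarization, but one has to check the sign and $\hbar$ conventions, especially for the $\mathcal{V}_i^\vee\otimes\mathcal{V}_i$ block where the roles of $q$ and $\hbar$ are swapped. These are bookkeeping issues rather than conceptual gaps, but in a full proof they are where the work lies.
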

Note that our normalization of the vertex functions differs from that in \cite{KorZet} by absorbing various powers of $q$ and $\hbar$ into the K\"ahler parameters.

\subsection{Difference equations}
\begin{Definition}
For $1\leq r \leq n$, we define the Macdonald difference operators by
$$
D_r(\xx;q,t) = t^{r(r+1)/2 -rn}\sum_{\substack{J \subset \{1,\ldots,n\}\\|J|=r}} \prod_{\substack{i \in J \\ j \notin J}} \frac{t x_i-x_j}{x_i-x_j} \prod_{i \in J} T^{\xx}_i
$$
where $\xx=(x_1,\ldots,x_n)$ and $T^{\xx}_i: x_i \mapsto q x_i$. 
\end{Definition}
For our purposes, these difference operators will act either on the space of rational functions in $\xx$ so that none of the denominators vanish or on the space of formal power series in $\xx$ with coefficients in some ring.

We introduce new parameters $\zeta_i$ related to the K\"ahler parameters by 
$$
\frac{\zeta_{i}}{\zeta_{i+1}}=\frac{h}{q} z_i
$$
and write the vertex functions $V_{I}(\uu,\zz)$ as $V_{I}(\uu,\zzeta)$.

\begin{Definition}
For each fixed point $I \in X^{\bT}$, we define a factor
$$
\alpha_I(\uu,\zzeta,t)=\prod_{i=1}^{n} \frac{\vartheta(\zeta_i t^{i-n}) \vartheta(u_{I_i} (q/t)^{n-i})}{\vartheta(\zeta_i/u_{I_i})}
$$
\end{Definition}
\begin{Proposition}\label{alphaq}
This factor satisfies the following transformation properties:
\begin{align*}
T^{\zzeta}_i \alpha_{I}(\uu,\zzeta,t) &= \alpha_{I}(\uu,\zzeta,t) t^{n-i}u_{I_i}^{-1}T^{\zzeta}_i \\
T^{\uu}_i \alpha_{I}(\uu,\zzeta,t)&=\alpha_{I}(\uu,\zzeta,t) (q/t)^{n-I^{!}_i} \zeta_{I^{!}_i}^{-1} T^{\uu}_i 
\end{align*}
\end{Proposition}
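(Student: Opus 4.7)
The plan is to prove both identities by direct computation from the quasi-periodicity relation $\vartheta(qx) = -\tfrac{1}{q^{1/2} x} \vartheta(x)$ recorded in (\ref{theta}), together with its immediate consequence $\vartheta(x/q) = -\tfrac{x}{q^{1/2}} \vartheta(x)$ obtained by substituting $x \mapsto x/q$. Since $\alpha_I$ is a product of theta functions and each shift $T^{\zzeta}_i$ or $T^{\uu}_i$ moves only a single variable, only two factors of $\alpha_I$ are affected in each case, and the computation is local to those two factors.

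For the first identity, I would first note that the only factors depending on $\zeta_i$ are $\vartheta(\zeta_i t^{i-n})$ in the numerator and $\vartheta(\zeta_i/u_{I_i})$ in the denominator, both coming from the $i$-th term in the product. After applying the quasi-periodicity formula with $x = \zeta_i t^{i-n}$ and $x = \zeta_i/u_{I_i}$ respectively and forming the ratio, the two signs cancel, the common factors of $q^{1/2}\zeta_i$ cancel, and what remains is the multiplier $t^{n-i} u_{I_i}^{-1}$.

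For the second identity, the first step is to locate the factor containing $u_i$. Since $I$ is a permutation of $\{1,\dots,n\}$, there is a unique $k$ with $I_k = i$, and by the definition $I^! = I^{-1}$ this index is $k = I^!_i$. The two affected theta factors are therefore $\vartheta(u_i(q/t)^{n-I^!_i})$ in the numerator and $\vartheta(\zeta_{I^!_i}/u_i)$ in the denominator. The numerator transforms via $\vartheta(qx) = -\tfrac{1}{q^{1/2}x}\vartheta(x)$ with $x = u_i(q/t)^{n-I^!_i}$, while the denominator transforms via $\vartheta(y/q) = -\tfrac{y}{q^{1/2}}\vartheta(y)$ with $y = \zeta_{I^!_i}/u_i$, since $u_i$ enters the denominator of the argument of the latter. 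Taking the ratio and cancelling the common signs and factors of $q^{1/2}u_i$ produces the stated multiplier.

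The only real obstacle is bookkeeping: correctly tracking the inverse permutation in the indexing, and distinguishing which form of the quasi-periodicity ($qx$ versus $x/q$) applies to each affected factor. No deeper structural input is required, and the symmetry between the two formulas simply mirrors the pairing of $\vartheta(\zeta_i t^{i-n})$ with $\vartheta(u_{I_i}(q/t)^{n-i})$ and $\vartheta(\zeta_i/u_{I_i})$ in the definition of $\alpha_I$.
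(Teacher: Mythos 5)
Your approach is exactly the paper's: the published proof of Proposition \ref{alphaq} is the single sentence ``This follows from direct computation using (\ref{theta}),'' and your write-up is that computation, with the two affected theta factors and the two directions of quasi-periodicity ($qx$ versus $x/q$) correctly identified in each case. The first identity does come out as you say: the numerator factor picks up $-\bigl(q^{1/2}\zeta_i t^{i-n}\bigr)^{-1}$, the denominator factor picks up $-u_{I_i}\bigl(q^{1/2}\zeta_i\bigr)^{-1}$, and their ratio is $t^{n-i}u_{I_i}^{-1}$.

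For the second identity, however, the one cancellation you do not display is the one that does not produce the stated answer. With $x=u_i(q/t)^{n-I^{!}_i}$ the numerator acquires the factor $-\bigl(q^{1/2}\,u_i\,(q/t)^{n-I^{!}_i}\bigr)^{-1}$, and with $y=\zeta_{I^{!}_i}/u_i$ the denominator acquires $-\zeta_{I^{!}_i}\bigl(q^{1/2}u_i\bigr)^{-1}$; the ratio is $(q/t)^{I^{!}_i-n}\,\zeta_{I^{!}_i}^{-1}$, i.e.\ the \emph{reciprocal} power of $q/t$ relative to the stated multiplier $(q/t)^{n-I^{!}_i}\zeta_{I^{!}_i}^{-1}$. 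The structural symmetry you invoke at the end actually predicts this: $\zeta_i$ is paired with $t^{i-n}$ and picks up $t^{-(i-n)}$, so $u_{I_i}$, paired with $(q/t)^{n-i}$, should pick up $(q/t)^{-(n-i)}$, not $(q/t)^{n-i}$. So either the exponent in the proposition as printed is negated (a typo you should flag and correct, checking downstream uses such as Lemma \ref{lem2}), or there is a sign convention neither of us is accounting for; as written, your proof asserts a conclusion that the computation you describe does not deliver, precisely at the only step where the bookkeeping is nontrivial.
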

\begin{proof}
This follows from direct computation using (\ref{theta}).
\end{proof}

For the sake of $q$-difference equations, the following function gives an equally good alternative choice
\begin{multline*}
\alpha_{I}(\uu,\zzeta,t) =\exp\left(-\frac{1}{\ln(q)} \sum_{i=1}^{n-1} \ln(z_i) \ln( \mathcal{L}_i|_{I} t^{i(i+1)/2-in}) \right) \\ \exp \left(-\frac{1}{\ln(q)} \ln(\zeta_n) \ln(u_1\ldots u_n t^{n(1-n)/2}) \right)
\end{multline*}

\begin{Definition}\label{vnorm}
We defined normalized vertex functions by
$$
\widetilde{V}_I(\uu,\zzeta)= \alpha_I(\zzeta,\uu,\hbar) \Phi((q-\hbar) T^{1/2}_I X)  V_{I}(\uu,\zzeta)
$$
\end{Definition}

\begin{Proposition}\label{dopzeta}[\cite{tRSKor} Theorem 2.6]
For all fixed points $I$, the normalized vertex function is an eigenvector of the Macdonald operators:
$$
D_r(\zzeta;q,\hbar) \widetilde{V}_{I}(\uu,\zzeta) = e_r(\uu^{-1})  \widetilde{V}_{I}(\uu,\zzeta)
$$
where $e_r(\uu^{-1})$ denotes the $r$th elementary symmetric polynomial in $\uu^{-1}$.
\end{Proposition}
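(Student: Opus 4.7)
The plan is to reduce the eigenvalue equation to a combinatorial identity on the $q$-Pochhammer coefficients of $V_I$. First, since $\Phi((q-\hbar) T^{1/2}_I X)$ depends only on $\uu$ and $\hbar$, it commutes with every $T^{\zzeta}_i$, and by Proposition~\ref{alphaq} the shift $T^{\zzeta}_i$ acting on $\alpha_I$ produces the scalar $\hbar^{n-i} u_{I_i}^{-1}$ followed by the shift. Hence conjugation by $\alpha_I \Phi((q-\hbar) T^{1/2}_I X)$ converts the desired identity into an equivalent equation $\widehat D_r V_I = e_r(\uu^{-1}) V_I$, where $\widehat D_r$ is the Macdonald operator rescaled by these scalars and by the prefactor ratio between $V_I$ and $\widetilde V_I$.

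Next, using the change of variables $z_i = (\hbar/q)\,\zeta_i/\zeta_{i+1}$, I would realize the $q$-shift $T^{\zzeta}_i$ on the series from Theorem~\ref{ver} as a re-indexing of the summation variables $d_{i,j}$. For each $J \subset \{1,\ldots,n\}$ with $|J| = r$ appearing in $D_r(\zzeta;q,\hbar)$, the shifted series relates to the original by explicit ratios of $q$-Pochhammer symbols, which simplify via the contiguous relations for $(x)_d$. The desired eigenvalue identity then amounts to a telescoping statement: after weighting each $J$ by $\prod_{i \in J,\, j\notin J}(\hbar \zeta_i - \zeta_j)/(\zeta_i - \zeta_j)$, the sum collapses via a partial fraction argument in the $\zeta_i$ variables to exactly $e_r(\uu^{-1})$ times the original series.

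The main obstacle is this final collapse, which is essentially an instance of Bethe-ansatz completeness for the trigonometric Ruijsenaars--Schneider (tRS) system: one must show that the specific quasimap series for $V_I$ genuinely matches a tRS eigenfunction. A more conceptual route, and the one taken in \cite{tRSKor}, is to identify $V_I$ directly with a tRS Bethe wavefunction constructed from integral/residue representations, for which the action of Macdonald operators is known explicitly. In that approach, the role of the normalization $\alpha_I \Phi((q-\hbar) T^{1/2}_I X)$ is precisely to absorb the $q$-periodicity discrepancy between the tRS wavefunction convention and the quasimap convention, so that the eigenvalue emerges cleanly as $e_r(\uu^{-1})$ rather than twisted by spurious powers of $q$ and $\hbar$; verifying this normalization match is then the remaining bookkeeping step.
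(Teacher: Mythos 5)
Your proposal ultimately lands on the same argument as the paper: the result is deduced from Theorem 2.6 of \cite{tRSKor} once one checks that $\alpha_I$ has the same $q$-quasiperiodicity in $\zzeta$ as the prefactor used there and that $\Phi((q-\hbar)T^{1/2}_I X)$ is independent of $\zzeta$, which is exactly the paper's (three-sentence) proof. The direct combinatorial/telescoping route you sketch in your middle paragraph is not attempted in the paper and is not needed, since you correctly identify in your final paragraph that the eigenvalue property itself is the content of the cited theorem and only the normalization bookkeeping remains.
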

\begin{proof}
Observe that the term $\alpha_I(\uu,\zzeta,\hbar)$ has the same $q$-periodicity in $\zzeta$ as the prefactor in Theorem 2.6 of \cite{tRSKor}. Furthermore, the term $\Phi((q-\hbar) T^{1/2}_I X)$ only involves the equivariant parameters, so it does not affect the $q$-difference properties in the variables $\zzeta$. Now the result follows from Theorem 2.6 in \cite{tRSKor}.
\end{proof}
Similarly, we have
\begin{Proposition}\label{dopu}[\cite{KorZet} Theorem 4.8]
For all fixed points $I$, the normalized vertex function is an eigenvector of the Macdonald operators:
$$
(q/\hbar)^{r(n-1) } D_r(\uu;q,q/\hbar) \widetilde{V}_I(\uu,\zzeta) = e_r(\zzeta^{-1}) \widetilde{V}_I(\uu,\zzeta)
$$
where $e_r(\zzeta^{-1})$ denotes the $r$th elementary symetric polynomial in $\zzeta^{-1}$.
\end{Proposition}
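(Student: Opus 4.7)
The strategy mirrors the proof of Proposition \ref{dopzeta}. The starting point is \cite{KorZet} Theorem 4.8, which supplies a particular normalization of $V_I(\uu,\zzeta)$ that satisfies the desired eigenvalue equation for $(q/\hbar)^{r(n-1)}D_r(\uu;q,q/\hbar)$ with eigenvalue $e_r(\zzeta^{-1})$. The statement then reduces to checking that our normalization factor $\alpha_I(\uu,\zzeta,\hbar)\Phi((q-\hbar)T^{1/2}_I X)$ and the prefactor used in \cite{KorZet} differ by a factor invariant under shifts $u_i\mapsto q u_i$; equivalently, that the two have the same $q$-periodicity in $\uu$.

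Unlike in Proposition \ref{dopzeta}, both factors in our normalization now depend on $\uu$, so each must be handled separately. The $q$-shift behavior of $\alpha_I(\uu,\zzeta,\hbar)$ under $T^{\uu}_i$ is supplied directly by Proposition \ref{alphaq}: each shift contributes a factor $(q/\hbar)^{n-I^{!}_i}\zeta^{-1}_{I^{!}_i}$. For $\Phi((q-\hbar)T^{1/2}_I X)$, I would compute its $q$-shift in $\uu$ directly from the identity $\varphi(qx)=(1-x)^{-1}\varphi(x)$ applied to the explicit tangent weight decomposition
\[
T^{1/2}_I X = \sum_{1\leq j<k\leq n} \frac{u_{I_k}}{u_{I_j}}
\]
derived earlier, yielding an explicit rational function in the $u_i$. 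The product of these two shift contributions is then compared with the $q$-periodicity of the prefactor from \cite{KorZet}, after the change of variables $\zeta_i/\zeta_{i+1}=(\hbar/q) z_i$ (and the absorption of powers of $q,\hbar$ noted after Theorem \ref{ver}) is accounted for.

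The main obstacle lies precisely in this last comparison: one must carefully track powers of $q$, $\hbar$, and ratios $u_{I_k}/u_{I_j}$ to verify the cancellation between our $\alpha_I \Phi$ and the \cite{KorZet} prefactor. Once this equality of $q$-periodicities in $\uu$ is verified, the argument of Proposition \ref{dopzeta} applies verbatim in the other variable: the ratio of the two normalizations is a $T^{\uu}_i$-invariant factor that can be pulled through the operator, so $\widetilde{V}_I$ inherits the eigenvalue equation with eigenvalue $e_r(\zzeta^{-1})$ from the \cite{KorZet}-normalized vertex function.
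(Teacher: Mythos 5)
Your proposal takes essentially the same route as the paper, which simply writes ``Similarly'' and defers to \cite{KorZet} Theorem 4.8 together with the argument of Proposition \ref{dopzeta}: reduce to the cited eigenvalue equation and verify that the normalization factor has the same $q$-quasiperiodicity in $\uu$ as the prefactor used there. Your additional observation that $\Phi((q-\hbar)T^{1/2}_I X)$ now genuinely depends on $\uu$ and must be tracked (unlike in the $\zzeta$ case) is precisely the point the paper leaves implicit, and your plan for handling it via Proposition \ref{alphaq} and the explicit weights of $T^{1/2}_I X$ is sound.
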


\subsection{The dual variety}
We denote by $X^{!}$ another copy of the cotangent bundle of the full flag variety, constructed as a Nakajima quiver variety in the same way as $X$ above. In particular, we assume the choice of stability $\theta^{!}$ is the same.

This variety is equipped with the action of a torus $\widetilde{\bT}^{!}$, with quotient torus $\bT^{!}$. We write $u_1^{!},\ldots,u_n^{!},\hbar^{!}$ for the coordinates on $\widetilde{\bT}^{!}$ and $a_1^{!},\ldots,a_{n-1}^{!},\hbar^{!}$ for the coordinates on $\bT^{!}$. We choose the same chamber as for $X$, and denote a generic cocharacter in the chamber by $\sigma^{!}$. As before, the choice of chamber provides a decomposition of the tangent bundle at a fixed point $I^{!}$
$$
T_{I^{!}} X^{!}=N_{I^{!}}^{!-}+N_{I^{!}}^{!+} \in K_{\bT^{!}}(pt)
$$
into attracting and repelling directions.

Similarly, the K\"ahler torus of $X^{!}$ has coordinates $z_1^{!},\ldots,z_{n-1}^{!}$. As discussed in the introduction, 3d mirror symmetry expects the existence of a bijection
$$
X^{\bT} \longleftrightarrow {X^{!}}^{\bT^{!}}
$$
and an isomorphism of tori
$$
\kappa: \bT \times \bK \times \mathbb{C}^{\times}_q \to \bT^{!} \times \bK^{!} \times \mathbb{C}^{\times}_q
$$
In our context, we define the bijection on fixed points as
$$
I \longleftrightarrow I^{-1}
$$
For uniformity in our formulas below, we prefer to write $I^{!}$ for the inverse permutation $I^{-1}$. We define $\kappa$ by 
\begin{align}\label{kap} \nonumber
    z_{i} &\mapsto \hbar^{!} a^{!}_i \\ \nonumber
    a_i &\mapsto \frac{\hbar^{!}}{q} z^{!}_i \\ \nonumber
    \hbar &\mapsto \frac{q}{\hbar^{!}} \\
    q &\mapsto q
\end{align}
As in the case of $X$, we define new parameters $\zeta^{!}_1,\ldots, \zeta^{!}_n$ related to $z_i^{!}$ by 
$$
\frac{\zeta^{!}_i}{\zeta^{!}_{i+1}}=\frac{\hbar^{!}}{q} z^{!}_i
$$
The parameters $\zeta_i$ and $\zeta_i^{!}$ can be thought of as coordinates on extensions $\widetilde{\bK}$ and $\widetilde{\bK}^{!}$ of the K\"ahler tori. In this language, the map $\kappa$ is induced by the map
$$
\widetilde{\bT}\times \widetilde{\bK} \times \mathbb{C}^{\times}_q \to \widetilde{\bT}^{!} \times \widetilde{\bK}^{!} \times \mathbb{C}^{\times}_q
$$
given by
\begin{align*}
    \zeta_{i} &\mapsto \zeta^{!}_i \\
    u_i &\mapsto u_i^{!} \\
    \hbar &\mapsto \frac{q}{\hbar^{!}} \\
    q &\mapsto q
\end{align*}
Abusing notation, we will also write this map as $\kappa$ when it appears in what follows.

From the explicit form of $\kappa$, it is easy to verify that the differential of $\kappa$, restricted to suitable subtori, satisfies
\begin{equation}\label{stabch}
d \kappa(\sigma)= \theta^{!} \quad \text{and} \quad d \kappa(\theta)=\sigma^{!}
\end{equation}
This is an expected property of 3d mirror symmetry, see \cite{KS2} where the property (\ref{stabch}) is part of the definition of 3d mirror symmetry.

\subsection{Limits of vertex functions}
Given a choice of chamber $\mathfrak{C}$ determined by a cocharacter $\sigma: \mathbb{C}^{\times} \to \bA$, we define
\begin{equation}\label{0c}
V_p(0_{\mathfrak{C}},\zz):= \lim_{w \to 0} V_p(\sigma(w),\zz)
\end{equation}
Since all equivariant parameters appear in terms of the form
$$
\frac{1-w u_i/u_j}{1-w' u_i/u_j}
$$
for some $w,w' \in \mathbb{Q}(q,\hbar)$, this limit is a well-defined element of $\mathbb{C}(q,\hbar)[[\zz]]$.

For our choice of chamber in (\ref{chamb}), we have
\begin{Proposition}\label{limver}
$$
\kappa\left(V_I(0_{\mathfrak{C}},\zz)\right)= \Phi((q-\hbar^{!}) N_{I^{!}}^{!+})
$$
\end{Proposition}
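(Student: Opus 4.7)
The plan is to compute $V_I(0_{\mathfrak{C}},\zz)$ by term-by-term evaluation of the formula in Theorem \ref{ver} under the limit defined by the cocharacter $\sigma$, and then recognize the resulting series, after applying $\kappa$, as $\Phi((q-\hbar^{!})N_{I^{!}}^{!+})$ via the $q$-binomial theorem.

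First I would substitute $\uu = \sigma(w)$, so that $u_i = w^{-i}$, into the explicit formula of Theorem \ref{ver}. The equivariant monomial $u_{I_k}/u_{I_j}$ equals $w^{I_j - I_k}$, which tends to $0$ when $I_k < I_j$ (attracting for $\mathfrak{C}$) and to $\infty$ when $I_k > I_j$ (repelling). The corresponding limits of the Pochhammer ratios are then immediate: $\frac{(\hbar y)_d}{(q y)_d}$ tends to $1$ or $(\hbar/q)^d$, while $\frac{(q y)_d}{(\hbar y)_d}$ tends to $1$ or $(q/\hbar)^d$, depending on whether $y$ is attracting or repelling. In the third product of Theorem \ref{ver}, the factor $u_i/u_{I_j}$ with $i \in \{1,\ldots,n\}$ contributes $1$ for $i<I_j$, $\frac{(\hbar)_d}{(q)_d}$ for $i = I_j$ (the diagonal case), and $(\hbar/q)^d$ for $i>I_j$.

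Collecting these limits, the $\uu$-dependence of $V_I$ reduces to explicit powers of $q/\hbar$ combined with the diagonal Pochhammers $\frac{(\hbar)_d}{(q)_d}$. Applying $\kappa$ sends $\hbar \mapsto q/\hbar^{!}$ and $z_i \mapsto \hbar^{!} a^{!}_i$, converting each diagonal factor into $\frac{(q/\hbar^{!})_d}{(q)_d}$ and absorbing the powers of $q/\hbar$ into the K\"ahler monomials. The $q$-binomial theorem
$$\sum_{d \geq 0} \frac{(A)_d}{(q)_d} x^d = \frac{\varphi(Ax)}{\varphi(x)}$$
then identifies the resulting series, after a change of summation variables matching the $n(n-1)/2$ indices $d_{i,j}$ with the $\binom{n}{2}$ attracting weights of $N_{I^{!}}^{!+}$, with the product
$$\prod_{w \in N_{I^{!}}^{!+}} \frac{\varphi(qw)}{\varphi(\hbar^{!} w)} = \Phi((q-\hbar^{!}) N_{I^{!}}^{!+}).$$

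The main obstacle is the combinatorial bookkeeping: one must verify that the three intertwined products in Theorem \ref{ver}, upon taking the limit, combine to give precisely the factorization over the attracting weights at $I^{!}$ under the interchange $I \leftrightarrow I^{!}$. Explicitly tracking powers of $q/\hbar$ coming from products~1 and~2 together with the off-diagonal contributions in product~3 should yield the correct $\aa^{!}$-monomials after $\kappa$. Verifying the identity in the cases $n=2,3$ directly from Theorem \ref{ver} provides a model for the general bookkeeping, and the resulting factorization is consistent with the dimension count $n(n-1)/2 = |N_{I^{!}}^{!+}|$.
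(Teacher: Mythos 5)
Your proposal follows essentially the same route as the paper: substitute $\uu=\sigma(w)$, take the term-by-term limits of the Pochhammer ratios according to whether each weight is attracting, repelling, or diagonal, re-sum the surviving diagonal factors $\frac{(\hbar)_d}{(q)_d}$ into ratios of $\varphi$'s, and apply $\kappa$ to recognize $\Phi((q-\hbar^{!})N_{I^{!}}^{!+})$. The paper likewise leaves the final combinatorial recombination partly sketched (it introduces the change of variables $f_{j,k}=d_{k-1,j}-d_{k,j}$ and invokes an induction on the order $\prec$), so your outline is a faithful match, though a complete write-up would need to make that reindexing and the resulting closed-form product explicit.
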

\begin{proof}
Recall that attracting directions of the tangent space look like $u_i/u_j$ for $i<j$. In the limit of the vertex function with respect to the chamber $\mathfrak{C}$, these weights contribute terms of the form
$$
\lim_{a\to 0} \frac{1-w a}{1-w' a} = 1
$$
for $w,w' \in \mathbb{Q}(q,\hbar)$. Repelling weights contribute terms of the form
$$
\lim_{a\to \infty} \frac{1-w a}{1-w'a} = \frac{w}{w'}
$$
So if $a$ is an repelling weight, we obtain contributions of the form
$$
\lim_{a\to\infty} \frac{\left(\hbar a \right)_d}{\left(q a \right)_d} = \left(\frac{\hbar}{q}\right)^{d}
$$
Putting this together with Theorem \ref{ver}, we find that
\begin{multline*}
    V_p(0_{\mathfrak{C}},\zz) = \sum_{d_{i,j} \in C_I} \zz^{\dd} \prod_{i=1}^{n-1} \frac{(\hbar)_{d_{n-1,i}}}{(q)_{d_{n-1,i}}} \prod_{i=1}^{n-2} \prod_{j=1}^i  \frac{(\hbar)_{d_{i,j}-d_{i+1,j}}}{(q)_{d_{i,j}-d_{i+1,j}}} \\ \prod_{i=1}^{n-2} \prod_{j=1}^{i} \prod_{\substack{k=1\\ I_k>I_j}}^{i+1} \left( \frac{\hbar}{q}\right)^{d_{i,j}-d_{i+1,k}}  \prod_{i=1}^{n-1} \prod_{\substack{j,k=1 \\ I_k>I_j}}^{i} \left( \frac{q}{\hbar}\right)^{d_{i,j}-d_{i,k}}  \prod_{i=1}^{n} \prod_{\substack{j=1 \\ i>I_j}}^{n-1} \left( \frac{\hbar}{q}\right)^{d_{n-1,j}} 
\end{multline*}
Now, define indices $f_{j,k}$ so that 
$$
f_{j,k} = \begin{cases} d_{k-1,j}-d_{k,j} & k<n \\ d_{n-1,j} & k=n \end{cases}
$$
We observe that
\begin{align*}
\prod_{1 \leq j < k \leq n} (z_j \ldots z_{k-1})^{f_{j,k}} &= 
\zz^{\dd}
\end{align*}
Furthermore, the $q$-Pochammer terms in $V_p(0_{\mathfrak{C}},\zz)$ give
$$
\prod_{i=1}^{n-1}\frac{(\hbar)_{d_{n-1,i}}}{(q)_{d_{n-1,i}}}  \prod_{i=1}^{n-2} \prod_{j=1}^{i} \frac{(\hbar)_{d_{i,j}-d_{i+1,j}}}{(q)_{d_{i,j}-d_{i+1,j}}} = \prod_{1\leq j < k \leq n}\frac{(\hbar)_{f_{j,k}}}{(q)_{f_{j,k}}}
$$

A proof by induction on the order $\prec$ from Definition \ref{bruhat} can show that the rest of the terms combine to give in total
$$
\prod_{1 \leq j <k \leq n} \frac{\varphi\left( q \left(\frac{\hbar}{q} \right)^{k-j+\delta(I_j<I_k)} z_j \ldots z_{k-1}\right)}{\varphi\left( \left(\frac{\hbar}{q} \right)^{k-j-1+\delta(I_j<I_k)} z_j \ldots z_{k-1}\right)}
$$
where $\delta(a<b)$ is 1 if $a<b$ and 0 otherwise. Applying the map $\kappa$, this is clearly seen to be $\Phi((q-\hbar^{!}) N_{I^!}^+)$.

\end{proof}

\section{Weight functions and elliptic stable envelopes}
\subsection{Definitions}
Following \cite{RTV}, we define elliptic weight functions associated to the cotangent bundle of the full flag variety. We will show that these coincide with the elliptic stable envelope of $X$. For a further discussion of elliptic weight functions and their relation to stable envelopes in the case of the cotangent bundle of the Grassmannian, see \cite{ellqg}.

The weight functions depend on the parameters
\begin{itemize}
    \item $w_j$ for $1\leq j \leq n$, which we abbreviate by $\ww$.
     \item $t^{(k)}_j $ for $1\leq k \leq n$ and $1\leq j \leq k$, where $t^{(n)}_j=w_j$. We abbreviate by $\tt^{(k)}$ the variables $t^{(k)}_1,\ldots,t^{(k)}_k$ and by $\tt$ the variables $\tt^{(1)},\ldots,\tt^{(n-1)}$.
    \item $\mu_j$ for $1\leq j \leq n$, which we abbreviate by $\mmu$.
    \item $\hbar$
\end{itemize}
We will identify these with the parameters of $X$ in (\ref{wtparam}) below after discussing the main properties of the weight functions.

Let
$$
U_{I}(\tt,\ww,\hbar,\mu)= \prod_{k=1}^{n-1}\left( \prod_{a=1}^{k} \prod_{c=1}^{k+1} \psi_{I,k,a,c}(t^{(k+1)}_c/t^{(k)}_a) \right) \prod_{b=a+1}^{k} \frac{\vartheta(\hbar t^{(k)}_b/t^{(k)}_a)}{\vartheta(t^{(k)}_b/t^{(k)}_a )}
$$
where 
$$
\psi_{I,k,a,c}(x) = \begin{dcases} \vartheta( x) & i^{(k+1)}_c<i^{(k)}_a \\
\frac{\vartheta\left(x \hbar^{-\delta(I_{k+1}<i^{(k)}_a)} \mu_{j(I,k,a)}/\mu_{k+1} \right)}{\vartheta\left( \hbar^{-\delta(I_{k+1}<i^{(k)}_a)} \mu_{j(I,k,a)}/\mu_{k+1} \right)} & i^{(k+1)}_c=i^{(k)}_a  \\
\vartheta(x/\hbar) & i^{(k+1)}_c>i^{(k)}_a 
\end{dcases}
$$
Here, the index $j(I,k,a)\in \{1,\ldots,n\}$ is the index so that 
$$
I_{j(I,k,a)}=i^{(k)}_a
$$
and
$$
\delta(a<b) = \begin{cases}
1 & a<b \\
0 & \text{otherwise}
\end{cases}
$$
Let 
$$
E(\tt,\hbar)= \prod_{k=1}^{n-1} \prod_{a,b=1}^{k} \vartheta(\hbar t^{(k)}_b/t^{(k)}_a)
$$
and
\begin{equation*}
\widetilde{U}_I(\tt,\ww,\hbar,\mmu)=\frac{U_{I}(\tt,\ww,\hbar,\mmu)}{E(\tt,\hbar)} 
\end{equation*}
Define the symmetrization of a function of $\tt^{(k)}$ by
$$
\sym_{t^{(k)}} f(t^{(k)}_1,\ldots,t^{(k)}_k) = \sum_{\tau \in S_{k}} f(t^{(k)}_{\tau(1)},\ldots,t^{(k)}_{\tau(k)})
$$
Define
$$
W_I(\tt,\ww,\hbar,\mmu)=  \vartheta(\hbar^{-1})^{n(n-1)/2} \sym_{\tt^{(1)}} \ldots \sym_{\tt^{n-1}} U_I(\tt,\ww,\hbar,\mmu)
$$
$W_I(\tt,\zz,\hbar,\mmu)$ is known as the elliptic weight function, see \cite{RTV}. We will need the normalized weight function
\begin{align*}
\widetilde{W}_I(\tt,\ww,\hbar,\mmu)&= \frac{W_{I}(\tt,\ww,\hbar,\mmu)}{E(\tt,\hbar)} \\
&= \vartheta(\hbar^{-1})^{n(n-1)/2} \sym_{\tt^{(1)}} \ldots \sym_{\tt^{n-1}} \widetilde{U}_I(\tt,\ww,\hbar,\mmu)
\end{align*}
\subsection{Properties of the weight functions}
Although our normalization differs slightly from that in \cite{RTV}, the proofs of all of the properties of the weight functions stated below can be obtained from straightforward modifications of the proofs of the analogous properties in \cite{RTV} Section 2. For a function $f(\tt)$, let $f(\ww_I)$ denote the result of substituting $t^{(k)}_j=w_{i^{(k)}_j}$ in $f$. 
\begin{Lemma}[\cite{RTV} Lemma 2.4]\label{stabt}
$\widetilde{W}_{I}(\ww_J,\ww,\hbar,\mmu)=0$ unless $I\prec J$
\end{Lemma}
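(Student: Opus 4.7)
The plan is to prove the triangularity by analyzing the symmetrization sum term-by-term, following the strategy of the proof of Lemma 2.4 in \cite{RTV}. First I would unfold $\widetilde W_I(\ww_J,\ww,\hbar,\mmu)$ as a sum over tuples $(\tau^{(1)},\ldots,\tau^{(n-1)})\in S_1\times\cdots\times S_{n-1}$, with each summand equal to $\widetilde U_I$ evaluated at $t^{(k)}_j = w_{j^{(k)}_{\tau^{(k)}(j)}}$, where $j^{(k)}_m$ denote the ordered indices of $J$. Since $E(\tt,\hbar)$ is generically nonzero at $\ww_J$, vanishing of any individual summand can only come from the $\psi$-factors making up $U_I$.

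Next, I would inspect the three cases in the definition of $\psi_{I,k,a,c}$. Only the case $i^{(k+1)}_c < i^{(k)}_a$ produces a factor $\vartheta(t^{(k+1)}_c/t^{(k)}_a)$ that can vanish under the substitution, and it vanishes precisely when $j^{(k+1)}_{\tau^{(k+1)}(c)} = j^{(k)}_{\tau^{(k)}(a)}$. The $\hbar$-shifted thetas $\vartheta(\hbar t^{(k)}_b/t^{(k)}_a)$ as well as the $\mu$-dependent ratios in the middle case remain generically nonvanishing at $\ww_J$. Thus the question reduces to a combinatorial one about how the attracting-case factors of $\psi$ chain across consecutive levels.

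The core of the proof is a pigeonhole argument. If $I\not\prec J$, i.e.\ there exists $(k_0,m_0)$ with $i^{(k_0)}_{m_0} > j^{(k_0)}_{m_0}$, then the $m_0$ smallest ordered $J$-indices at level $k_0$ are all bounded by $j^{(k_0)}_{m_0}$, while the attracting-case $\psi$-constraints linking levels $k_0,k_0+1,\ldots,n-1$ force, for every choice of $(\tau^{(k)})$, at least two of the substituted $w$-indices to coincide. This produces a $\vartheta(1)=0$ factor in every summand and therefore kills the entire symmetrization.

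The main obstacle is the combinatorial bookkeeping of these attracting-case chains combined with the three-case structure of $\psi_{I,k,a,c}$: one has to verify that no permutation tuple $(\tau^{(k)})$ can avoid a collision of substituted indices once the violation $i^{(k_0)}_{m_0} > j^{(k_0)}_{m_0}$ is present. Because the normalization $\widetilde U_I = U_I/E(\tt,\hbar)$ differs from that of \cite{RTV} only by a factor which is generically invertible at $\ww_J$, I would adapt the argument of \cite{RTV} Lemma 2.4 essentially verbatim rather than redo the full case analysis here.
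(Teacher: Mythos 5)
Your proposal matches the paper's own treatment: the paper offers no independent proof of this lemma, stating only that it follows from \cite{RTV} Lemma 2.4 by a straightforward modification, since the normalization differs from that of \cite{RTV} only by the factor $E(\tt,\hbar)$, which is generically nonvanishing at $\ww_J$. Your sketch of the symmetrization expansion, the identification of the first-case $\vartheta$ factors as the only ones that can vanish, and the concluding pigeonhole argument is a faithful outline of the cited proof, and your final deferral to \cite{RTV} is exactly what the paper does.
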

Next, let
$$
P_I(\ww,\hbar)=  \prod_{\substack{k<l \\ I_l<I_k}}\vartheta(w_{I_k}/w_{I_l}) \prod_{\substack{k<l \\ I_k<I_l}}\vartheta(\hbar w_{I_k}/w_{I_l})
$$

\begin{Lemma}[\cite{RTV} Lemma 2.5]\label{rtv2}
$$
\widetilde{W}_I(\ww_{I},\ww,\hbar,\mmu)= P_I(\ww,\hbar)
$$
\end{Lemma}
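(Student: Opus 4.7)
The plan is to substitute $t^{(k)}_a = w_{i^{(k)}_a}$ directly into the definition of $\widetilde{W}_I$ and show that exactly one summand in the symmetrization survives: the one corresponding to $\tau^{(k)} = \mathrm{id}$ for every $k$. Once reduced to this single term, the evaluation can be carried out explicitly and matched to $P_I(\ww, \hbar)$.

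For the vanishing of non-identity summands, I would fix $(\tau^{(1)}, \ldots, \tau^{(n-1)})$ not equal to the identity tuple and let $k$ be the smallest index with $\tau^{(k)} \neq \mathrm{id}$; by minimality, $t^{(k-1)}_a$ is substituted in its natural order $w_{i^{(k-1)}_a}$. Using that $\{i^{(k)}_j\} = \{i^{(k-1)}_j\} \cup \{I_k\}$, pick an index $c^*$ with $(\tau^{(k)})^{-1}(c^*) < c^*$ and $i^{(k)}_{c^*} \neq I_k$. Let $a$ be the unique index with $i^{(k-1)}_a = i^{(k)}_{c^*}$, and set $c = (\tau^{(k)})^{-1}(c^*)$. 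Then $i^{(k)}_c < i^{(k)}_{c^*} = i^{(k-1)}_a$, so $\psi_{I, k-1, a, c}$ falls into its first case and evaluates to $\vartheta(w_{i^{(k)}_{c^*}}/w_{i^{(k-1)}_a}) = \vartheta(1) = 0$. The exceptional case where no such $c^*$ exists with $i^{(k)}_{c^*} \neq I_k$ (which only arises for very restricted transposition-type $\tau^{(k)}$) can be handled by a parallel argument, appealing instead to a vanishing of a $\psi$-factor at an adjacent level $k' \neq k-1$.

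Under the identity tuple, each middle-case $\psi_{I, k, a, c^*}$ collapses to $1$ since its numerator and denominator are identical, and the $\mmu$-parameters accordingly drop out of the answer. What remains is a product of $\vartheta(w_i/w_j)$ and $\vartheta(\hbar w_i/w_j)$ factors coming from the first- and third-case $\psi$'s, the ratios $\vartheta(\hbar t^{(k)}_b/t^{(k)}_a)/\vartheta(t^{(k)}_b/t^{(k)}_a)$, the denominator $E(\tt, \hbar)$, and the prefactor $\vartheta(\hbar^{-1})^{n(n-1)/2}$. I would reorganize this product by grouping contributions according to the pair $(k, l)$ with $1 \leq k < l \leq n$ that they refer to, and use the identities $\vartheta(1/x) = -\vartheta(x)$ and $\vartheta(\hbar^{-1}) = -\vartheta(\hbar)$ to match signs and $\hbar$-shifts. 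The outcome should be exactly $\vartheta(w_{I_k}/w_{I_l})$ for each inversion $(k, l)$ of $I$ and $\vartheta(\hbar w_{I_k}/w_{I_l})$ for each ascent, recovering $P_I(\ww, \hbar)$.

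The main obstacle is the combinatorial bookkeeping in this final step. For each pair $(k, l)$ with $k < l$, many indices $(k', a, c)$ in $\widetilde{U}_I$ and many terms in $E(\tt, \hbar)$ together contribute, and one must verify that the cumulative $\hbar$-exponent and sign agree with $P_I$. I expect this to be handled by a telescoping argument exploiting the nested structure $\{i^{(k-1)}_j\} \subset \{i^{(k)}_j\}$ and working level by level through $k = 1, \ldots, n-1$, with the $\vartheta(\hbar^{-1})^{n(n-1)/2}$ prefactor absorbing the $\binom{n}{2}$ sign and inversion factors accumulated from the $\vartheta$ ratios.
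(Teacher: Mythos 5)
The paper does not actually prove this lemma: it defers to \cite{RTV} Lemma 2.5, remarking only that the proofs there carry over under the present normalization. Your strategy --- substitute $\ww_I$, show that only the identity term of the symmetrization survives, then evaluate that term --- is exactly the strategy behind the cited proof, so in outline you are reconstructing the intended argument. Your treatment of the non-exceptional case is correct: with $\tau^{(k-1)}=\mathrm{id}$ by minimality, the pair $(a,c)$ you construct does land in the first case of $\psi$ and produces $\vartheta(1)=0$.

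The exceptional case, however, is a genuine gap, for two reasons. First, it is not confined to ``transposition-type'' $\tau^{(k)}$: the condition is that the index $c_0$ with $i^{(k)}_{c_0}=I_k$ is the \emph{unique} anti-exceedance of $(\tau^{(k)})^{-1}$, and this is satisfied by various longer cycles as well. Second, and more seriously, the proposed escape --- finding a vanishing $\psi$-factor at the adjacent level $k\to k+1$ --- does not go through as stated, because those factors have arguments $t^{(k+1)}_{\tau^{(k+1)}(c')}/t^{(k)}_{\tau^{(k)}(a)}$ and hence depend on $\tau^{(k+1)}$, which your upward induction (minimal bad $k$) has not constrained; you therefore cannot point to a specific first-case factor there that must vanish. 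The fix is to reverse the direction: let $K$ be the \emph{largest} index with $\tau^{(K)}\neq\mathrm{id}$, so that $\tau^{(K+1)}=\mathrm{id}$ (with $\tau^{(n)}=\mathrm{id}$ automatic), and look at level $K\to K+1$. Because $\{i^{(K)}_1,\dots,i^{(K)}_K\}\subset\{i^{(K+1)}_1,\dots,i^{(K+1)}_{K+1}\}$, every $i^{(K)}_{a'}$ with $a'<a$ occurs as some $i^{(K+1)}_c$ satisfying $i^{(K+1)}_c<i^{(K)}_a$, so nonvanishing of all first-case factors forces $i^{(K)}_{\tau^{(K)}(a)}\notin\{i^{(K)}_1,\dots,i^{(K)}_{a-1}\}$, i.e.\ $\tau^{(K)}(a)\geq a$ for every $a$, i.e.\ $\tau^{(K)}=\mathrm{id}$ --- a contradiction with no exceptional case at all. (Going upward from the minimal bad $k$ fails precisely because $\{i^{(k)}_j\}\supset\{i^{(k-1)}_j\}$ leaves the position of the extra element $I_k$ unconstrained.) Finally, the evaluation of the surviving identity term is only sketched; your observations that the middle-case $\psi$'s collapse to $1$ and that the $\mmu$'s drop out are correct, but the telescoping against $E(\tt,\hbar)$ and the prefactor $\vartheta(\hbar^{-1})^{n(n-1)/2}$ still needs to be carried out to confirm the signs and the placement of $\hbar$ in $P_I(\ww,\hbar)$.
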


Define the functions
\begin{align*}
  G(\tt,\ww,\hbar,\mmu)= \prod_{k=1}^{n-1} \prod_{a=1}^{k} \prod_{c=1}^{k+1} \vartheta(t^{(k+1)}_c/t^{(k)}_a) \prod_{k=1}^{n-1}\frac{\vartheta(\hbar \mu_{k+1}/\mu_{k} t^{(k)}_1 \ldots t^{(k)}_k)}{\vartheta(\hbar \mu_{k+1}/\mu_{k})\vartheta(t^{(k)}_1 \ldots t^{(k)}_k)}
\end{align*}
and 
\begin{multline*}
  G_I(\ww,\hbar,\mmu)  = \\ \vartheta(\hbar)^{N(I)} \frac{\vartheta(\hbar)\vartheta( \prod_{\substack{1\leq j <k\leq n \\ I_j<I_k}} w_{I_k}/w_{I_j})}{\vartheta(\hbar \prod_{\substack{1\leq j <k\leq n \\ I_j<I_k}} w_{I_k}/w_{I_j})} \prod_{k=1}^{n-1}\frac{\vartheta(w_{I_1} \ldots w_{I_k}) \vartheta(\hbar \mu_{k+1}/\mu_{k})}{\vartheta(w_{I_1} \ldots w_{I_k} \hbar \mu_{k+1}/\mu_{k})}
\end{multline*}
where $N(I)$ is the number of non-inversions of $I$:
$$
N(I):= \sum_{\substack{1\leq j < k \leq n \\ I_j<I_k}} 1 = \frac{n(n-1)}{2} - \sum_{\substack{1\leq j < k \leq n \\ I_j>I_k}} 1
$$
Let 
$$
G_I(\tt,\ww,\hbar,\mmu)= G(\tt,\ww,\hbar,\mmu) G_{I}(\ww,\hbar,\mmu)
$$
\begin{Lemma}[\cite{RTV} Lemma 2.3]
The ratio $W_I(\tt,\ww,\hbar,\mmu)/G_I(\tt,\ww,\hbar,\mmu)$ does not change when any of the variables $\tt$, $\ww$, $\mmu$, and $\hbar$ are shifted by $q$.
\end{Lemma}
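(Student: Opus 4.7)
The plan is to reduce the claim to a variable-by-variable bookkeeping calculation using only the transformation rule $\vartheta(qx)=-q^{-1/2}x^{-1}\vartheta(x)$ from (\ref{theta}).

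First I would strip off the symmetrization. The factor $G(\tt,\ww,\hbar,\mmu)$ is manifestly symmetric in each $\tt^{(k)}$: the product $\prod_{a,c}\vartheta(t^{(k+1)}_c/t^{(k)}_a)$ is symmetric in $\tt^{(k)}$ (and in $\tt^{(k+1)}$), and the final factors depend on $\tt^{(k)}$ only through the symmetric function $t^{(k)}_1\cdots t^{(k)}_k$. The auxiliary $G_I(\ww,\hbar,\mmu)$ carries no $\tt$-dependence at all. Consequently $G_I(\tt,\ww,\hbar,\mmu)$ is $\prod_k S_k$-invariant, and one has
$$
\frac{W_I(\tt,\ww,\hbar,\mmu)}{G_I(\tt,\ww,\hbar,\mmu)}=\vartheta(\hbar^{-1})^{n(n-1)/2}\sym_{\tt^{(1)}}\cdots\sym_{\tt^{(n-1)}}\frac{U_I(\tt,\ww,\hbar,\mmu)}{G_I(\tt,\ww,\hbar,\mmu)}.
$$
Because $q$-shifts commute with relabeling of summation indices, it suffices to prove that $U_I/G_I$ is $q$-periodic in each of the variables $t^{(k)}_b$, $w_j$, $\mu_j$, and $\hbar$.

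Next I would do the variable-by-variable analysis. Fix for example the variable $t^{(k)}_b$ with $1\le k\le n-1$. In $U_I$ it appears in the factors $\psi_{I,k,b,c}(t^{(k+1)}_c/t^{(k)}_b)$ for $c=1,\dots,k+1$, in the factors $\psi_{I,k-1,a,b}(t^{(k)}_b/t^{(k-1)}_a)$ for $a=1,\dots,k-1$, and in the factors $\vartheta(\hbar t^{(k)}_{b'}/t^{(k)}_b)/\vartheta(t^{(k)}_{b'}/t^{(k)}_b)$ with $b'\neq b$. In $G\cdot G_I$ it appears in the corresponding plain factors $\prod_c\vartheta(t^{(k+1)}_c/t^{(k)}_b)$, $\prod_a\vartheta(t^{(k)}_b/t^{(k-1)}_a)$, and the symmetric ratio $\vartheta(\hbar\mu_{k+1}/\mu_k\,t^{(k)}_1\cdots t^{(k)}_k)/\vartheta(t^{(k)}_1\cdots t^{(k)}_k)$. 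Applying $\vartheta(qx)=-q^{-1/2}x^{-1}\vartheta(x)$ to each factor in numerator and denominator produces a sign $\pm 1$, a power of $q^{-1/2}$, and a Laurent monomial in $\tt,\ww,\mmu,\hbar$; the claim is that all three of these cancel between $U_I$ and $G_I$. For the $t^{(k)}_b$-shift the cancellation uses that the three flavors of $\psi$ differ from the bare $\vartheta$-factor in $G$ by factors of $\vartheta(\hbar^{\pm 1}\mu_\ast/\mu_{k+1})$, which are $t$-independent, so the monomial weight with respect to $t^{(k)}_b$ is the same as from the corresponding factor of $G$ up to the cross terms coming from the $\vartheta(\hbar t^{(k)}_{b'}/t^{(k)}_b)/\vartheta(t^{(k)}_{b'}/t^{(k)}_b)$ product, whose numerator and denominator contribute opposite $t^{(k)}_b$-weights that cancel pairwise.

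For the shifts of $w_j$, $\mu_j$, and $\hbar$ one repeats the same procedure. The $w_j$-shift interacts with the top-level factors (recall $t^{(n)}_j=w_j$) and with $G_I(\ww,\hbar,\mmu)$; here one must use the identity $N(I)+\#\{j<k:I_j>I_k\}=n(n-1)/2$ to balance the $\vartheta(\hbar)^{N(I)}$ prefactor against the $\hbar$-weights picked up from the $\psi_{I,n-1,a,c}$ factors. The $\mu_j$-shift affects only the $\psi$ factors of $U_I$ with $i^{(k+1)}_c=i^{(k)}_a$ together with the factors $\vartheta(\hbar\mu_{k+1}/\mu_k)^{-1}\vartheta(\hbar\mu_{k+1}/\mu_k\,t^{(k)}_1\cdots t^{(k)}_k)$ in $G$ and the trailing $\mu$-product in $G_I$; again the signs, $q^{-1/2}$ powers, and monomial weights cancel. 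The $\hbar$-shift is the noisiest case, but a parity count of the $\hbar$-bearing theta factors in $U_I$ versus $E(\tt,\hbar)\cdot G_I$ (which cancel $\vartheta$ for $\vartheta$) shows the signs agree, and one reads off the monomial from the sum of the logarithms of the $\hbar$-arguments.

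The main obstacle is purely combinatorial: keeping track of the signs and the monomial weights through the case distinction in $\psi_{I,k,a,c}$ depending on whether $i^{(k+1)}_c$ is less than, equal to, or greater than $i^{(k)}_a$, and ensuring that the Kronecker-delta exponents $\delta(I_{k+1}<i^{(k)}_a)$ in the equal case produce exactly the right $\mu$- and $\hbar$-monomials to match $G_I$. Everything else is a direct application of (\ref{theta}), and the needed identities are essentially those already verified in \cite{RTV} Section 2, with the only modification being the presence of the $\vartheta(\hbar^{-1})^{n(n-1)/2}$ prefactor, which is independent of $\tt$, $\ww$, and $\mmu$ and contributes only trivially to the $\hbar$-shift check.
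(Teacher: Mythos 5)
The paper offers no proof of this lemma at all—it is imported verbatim from \cite{RTV} (Lemma 2.3) with the remark that the arguments there carry over with straightforward modifications to the present normalization—and your sketch (pulling the symmetric factor $G_I$ inside the symmetrization, then checking $q$-periodicity of $U_I/G_I$ variable by variable via $\vartheta(qx)=-q^{-1/2}x^{-1}\vartheta(x)$) is exactly the computation that reference performs. So your proposal is correct and takes essentially the same route as the paper's (cited) proof; the only caveat is that the sign and $\hbar$-, $\mmu$-monomial bookkeeping you defer to at the end is the entire content of the verification, not an afterthought.
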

As a result, we deduce:
\begin{Lemma}\label{rtv3}
$\widetilde{W}_I(\tt,\ww,\hbar,\mmu)$ has the same transformation properties as $$G_I(\tt,\ww,\hbar,\mmu)/E(\tt,\hbar)$$ under shifts of the variables $\tt$, $\ww$, $\mmu$, and $\hbar$ by $q$.
\end{Lemma}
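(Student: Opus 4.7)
The plan is to deduce this as an immediate consequence of the preceding lemma by simply dividing through by $E(\tt,\hbar)$. By the definition given just above,
$$
\widetilde{W}_I(\tt,\ww,\hbar,\mmu)=\frac{W_I(\tt,\ww,\hbar,\mmu)}{E(\tt,\hbar)},
$$
so the function whose transformation properties we are asked to compare with $G_I/E$ differs from $W_I$ by the common factor $1/E(\tt,\hbar)$. The same common factor appears on the $G_I/E$ side, so dividing $W_I$ and $G_I$ by $E(\tt,\hbar)$ cannot disturb the equality of their shift behaviour.

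More precisely, I would argue as follows. Let $\phi$ denote any of the shifts $t^{(k)}_j\mapsto q t^{(k)}_j$, $w_j\mapsto q w_j$, $\mu_j\mapsto q\mu_j$, or $\hbar\mapsto q\hbar$. The preceding Lemma (\cite{RTV} Lemma 2.3) says that $W_I/G_I$ is $\phi$-invariant, equivalently,
$$
\frac{\phi(W_I)}{W_I}=\frac{\phi(G_I)}{G_I}.
$$
Now
$$
\frac{\phi(\widetilde{W}_I)}{\widetilde{W}_I}=\frac{\phi(W_I)/\phi(E)}{W_I/E}=\frac{\phi(W_I)}{W_I}\cdot\frac{E}{\phi(E)},
$$
and analogously
$$
\frac{\phi(G_I/E)}{G_I/E}=\frac{\phi(G_I)}{G_I}\cdot\frac{E}{\phi(E)}.
$$
The previous display identifies the two right-hand sides, so $\widetilde{W}_I$ and $G_I/E$ have identical $\phi$-multipliers for every one of the four kinds of shifts.

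There is no real obstacle here; the only care needed is to confirm that the preceding lemma (borrowed from \cite{RTV}) applies uniformly to all four classes of variables $\tt,\ww,\mmu,\hbar$, and that the factor $E(\tt,\hbar)$ cancels cleanly from the ratio (which it does, as it appears identically in the numerator and denominator of the quotient $\widetilde{W}_I/(G_I/E)=W_I/G_I$). In fact the proof can be stated even more compactly by observing that $\widetilde{W}_I/(G_I/E)=W_I/G_I$, which is $\phi$-invariant by \cite{RTV} Lemma 2.3.
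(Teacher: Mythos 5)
Your proof is correct and is exactly the deduction the paper intends (the paper states the lemma with only "As a result, we deduce," leaving implicit precisely your observation that $\widetilde{W}_I/(G_I/E)=W_I/G_I$ is invariant under all four shifts by \cite{RTV} Lemma 2.3). Nothing further is needed.
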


\subsection{Elliptic stable envelopes}
We identify the Chern roots, equivariant parameters, and K\"ahler parameters of $X$ with the weight function parameters by 
\begin{align} \label{wtparam} \nonumber
    \hbar &\mapsto \hbar \\ \nonumber
    t^{(k)}_a &\mapsto 1/x^{(k)}_a \\ \nonumber
    \mu_j/\mu_{j+1} &\mapsto \hbar z_i \\
    w_i &\mapsto 1/u_{i}
\end{align}
Under this identification, Lemmas \ref{stabt}, \ref{rtv2}, and \ref{rtv3} give us the following three lemmas.

\begin{Lemma}
$\widetilde{W}_{I}(\xx_J,\uu,\hbar,\zz)=0$ unless $I\prec J$.
\end{Lemma}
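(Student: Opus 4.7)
The plan is to transport Lemma \ref{stabt} directly through the parameter identification (\ref{wtparam}). The only content to check is the bookkeeping: namely, that the weight-function substitution $\ww_J$ (which sets $t^{(k)}_j = w_{i^{(k)}_j}$ for every $k,j$) corresponds under (\ref{wtparam}) to the restriction $\xx_J$ of the tautological Chern roots to the $\bT$-fixed point indexed by the permutation $J$.

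To verify this, I would recall from the description of the fixed points that at the point corresponding to $J$, the tautological bundle $\mathcal{V}_k$ restricts to $\text{Span}_\mathbb{C}\{e_{J_1},\ldots,e_{J_k}\}$, so the multiset of its Chern roots is $\{u_{J_1},\ldots,u_{J_k}\} = \{u_{i^{(k)}_1},\ldots,u_{i^{(k)}_k}\}$. Applying $t^{(k)}_a \mapsto 1/x^{(k)}_a$ and $w_i \mapsto 1/u_i$ from (\ref{wtparam}), the assignment $\ww_J$ matches $\xx_J$ as multiset assignments on the Chern roots. Because $\widetilde{W}_I$ is symmetric in each tuple $\tt^{(k)}$ thanks to the $\sym_{\tt^{(k)}}$ symmetrization, the ordering within each level is irrelevant and both assignments evaluate $\widetilde{W}_I$ to the same value.

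With the identification in hand, the statement follows at once: $\widetilde{W}_I(\xx_J,\uu,\hbar,\zz)$ equals $\widetilde{W}_I(\ww_J,\ww,\hbar,\mmu)$ under (\ref{wtparam}), and Lemma \ref{stabt} yields the vanishing unless $I \prec J$. There is no substantive obstacle here; the analytic content is already contained in \cite{RTV}, and this lemma is simply its geometric translation via the dictionary (\ref{wtparam}).
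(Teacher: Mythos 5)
Your proposal is correct and matches the paper's (implicit) argument exactly: the paper simply states that this lemma is Lemma \ref{stabt} transported through the dictionary (\ref{wtparam}), offering no further proof. Your additional check that the substitutions $\ww_J$ and $\xx_J$ agree as multiset assignments, and that the discrepancy in ordering within each level $k$ is harmless because of the $\sym_{\tt^{(k)}}$ symmetrization, is the right bookkeeping detail and is consistent with the paper's description of the fixed points.
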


\begin{Lemma}\label{stabq}
The function $\widetilde{W}_I(\xx,\uu,\hbar,\zz)$ has the same transformation properties as
\begin{align*}
 \Theta(\hbar)^{\text{rk}(\text{ind}^{-\mathfrak{C}}_I)} \Theta(T^{1/2} X) \frac{\vartheta(\hbar^{-1})\vartheta(\det \text{ind}^{-\mathfrak{C}}_I)}{\vartheta(\hbar^{-1} \det \text{ind}^{-\mathfrak{C}}_I)} \prod_{k=1}^{n-1} \frac{\vartheta(z_k \mathcal{L}_k)}{\vartheta(z_k) \vartheta(\mathcal{L}_k)} \prod_{k=1}^{n-1} \frac{\vartheta(\mathcal{L}_k|_I)\vartheta(z_k)}{\vartheta(\mathcal{L}_k|_I z_k)}
\end{align*}
under shifts of the variables $\xx,\uu,\zz,$ and $\hbar$ by $q$. Here, $\mathcal{L}_i$ denotes the tautological line bundle from (\ref{lb}) and $\text{ind}_{I}^{-\mathfrak{C}}$ denotes the index bundle from Definition \ref{ind}.
\end{Lemma}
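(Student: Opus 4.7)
By Lemma \ref{rtv3}, $\widetilde{W}_I(\tt,\ww,\hbar,\mmu)$ has the same $q$-transformation behavior as $G_I(\tt,\ww,\hbar,\mmu)/E(\tt,\hbar)$. The plan is therefore to substitute the identification (\ref{wtparam}) directly into $G_I/E$ and read off each piece of the stated expression. Signs from $\vartheta(1/x) = -\vartheta(x)$ and multiplicative constants are irrelevant for $q$-quasi-periodicity and will be ignored throughout.

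First I factor $G_I = G(\tt,\ww,\hbar,\mmu)\cdot G_I(\ww,\hbar,\mmu)$ and handle the $\ww$-only factor. With the chamber $\mathfrak{C}$, the repelling weights of the polarization at $I$ are exactly $u_{I_k}/u_{I_j}$ with $j<k$ and $I_j<I_k$, so $\text{rk}(\text{ind}^{-\mathfrak{C}}_I) = N(I)$ and $\det \text{ind}^{-\mathfrak{C}}_I = \prod_{j<k,\,I_j<I_k} u_{I_k}/u_{I_j}$. Under $w_i \mapsto 1/u_i$, the product $\prod_{j<k,\,I_j<I_k} w_{I_k}/w_{I_j}$ maps to $(\det \text{ind}^{-\mathfrak{C}}_I)^{-1}$, and the middle factor of $G_I(\ww,\hbar,\mmu)$ becomes $\vartheta(\hbar^{-1})\vartheta(\det \text{ind}^{-\mathfrak{C}}_I)/\vartheta(\hbar^{-1}\det \text{ind}^{-\mathfrak{C}}_I)$ by the odd property of $\vartheta$. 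The further substitutions $w_{I_1}\cdots w_{I_k} \mapsto (\mathcal{L}_k|_I)^{-1}$ and $\hbar\mu_{k+1}/\mu_k \mapsto z_k^{-1}$ then turn the remaining product in $G_I(\ww,\hbar,\mmu)$ into $\prod_k \vartheta(\mathcal{L}_k|_I)\vartheta(z_k)/\vartheta(\mathcal{L}_k|_I z_k)$, matching the last factor of the claimed expression.

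Next I handle $G(\tt,\ww,\hbar,\mmu)/E(\tt,\hbar)$. Under $t^{(k)}_a \mapsto 1/x^{(k)}_a$, the numerator of $G/E$ becomes $\prod_k \Theta(\mathcal{V}_k^{\vee}\otimes\mathcal{V}_{k+1})$ and the denominator $E$ becomes $\prod_k \Theta(\hbar^{-1}\mathcal{V}_k\otimes \mathcal{V}_k^{\vee})$. Comparing with the polarization (\ref{pol}), the ratio produces $\Theta(T^{1/2}X)$ up to the $\hbar$-twist that distinguishes $\mathcal{V}_k^{\vee}\otimes\mathcal{V}_k$ from $\hbar^{-1}\mathcal{V}_k\otimes\mathcal{V}_k^{\vee}$; this twist contributes a monomial in $\hbar$ that will have to be combined with the prefactor $\Theta(\hbar)^{\text{rk}(\text{ind}^{-\mathfrak{C}}_I)}$ already present in the statement. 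The remaining $\mu$-dependent part of $G$ yields $\prod_k \vartheta(z_k\mathcal{L}_k)/[\vartheta(z_k)\vartheta(\mathcal{L}_k)]$ via the substitutions from the previous paragraph.

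The subtle step in this identification is the reconciliation of $E(\tt,\hbar)$ with the diagonal piece of the polarization: because $E$ carries an extra factor of $\hbar$ relative to $\Theta(\mathcal{V}_k^{\vee}\otimes\mathcal{V}_k)$, one must verify that the resulting $\hbar$-monomial assembles correctly with $\vartheta(\hbar)^{\text{rk}(\text{ind}^{-\mathfrak{C}}_I)}$ so that the $q$-shift in $\hbar$ agrees on both sides. This is the only step that requires explicit use of the quasi-periodicity $\vartheta(qx) = -q^{-1/2}x^{-1}\vartheta(x)$ rather than mere term-by-term recognition under (\ref{wtparam}).
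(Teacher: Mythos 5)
Your proposal is correct and follows exactly the route the paper intends: the paper offers no explicit proof of this lemma, stating only that it follows from Lemma \ref{rtv3} under the identification (\ref{wtparam}), which is precisely the substitution you carry out factor by factor. Your version is in fact more detailed than the paper's, and you correctly isolate the only delicate point (the $\hbar$-bookkeeping between $E(\tt,\hbar)$, the diagonal part of the polarization, and the $\hbar$ hidden in $\mu_j/\mu_{j+1}\mapsto\hbar z_j$), though you defer rather than complete that verification.
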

For a function $f(\xx)$, we write $f(\xx_I)$ for the substitution $x^{(k)}_j=u_{I_j}$ into $f$. Then
\begin{Lemma}\label{stabd}
\begin{align*}
\widetilde{W}_I(\xx_I,\uu,\hbar,\zz) 
&= (-1)^{n(n-1)/2} (-1)^I \Theta(N_I^{+})
\end{align*}
where $(-1)^I$ is the sign of the permutation $I$.
\end{Lemma}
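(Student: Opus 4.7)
The plan is a direct computation, reducing the statement to Lemma \ref{rtv2} via the substitution (\ref{wtparam}) and tracking signs carefully.

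First, I would translate the substitution $\xx_I$ into the weight function language. Since $t^{(k)}_a \mapsto 1/x^{(k)}_a$ and $w_i \mapsto 1/u_i$, the evaluation $x^{(k)}_j = u_{I_j}$ corresponds exactly to $t^{(k)}_j = w_{I_j}$, i.e.\ to $\ww_I$ in the notation of Lemma \ref{rtv2}. Applying that lemma immediately gives
\[
\widetilde{W}_I(\xx_I,\uu,\hbar,\zz) \;=\; P_I(\ww,\hbar)\Big|_{w_i = 1/u_i} \;=\; \prod_{\substack{j<k \\ I_k < I_j}} \vartheta(u_{I_k}/u_{I_j}) \prod_{\substack{j<k \\ I_j < I_k}} \vartheta\bigl(\hbar\, u_{I_k}/u_{I_j}\bigr).
\]

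Next, I would compare this product factor by factor with
\[
\Theta(N_I^+) \;=\; \prod_{\substack{j<k \\ I_k < I_j}} \vartheta(u_{I_k}/u_{I_j}) \prod_{\substack{j<k \\ I_k > I_j}} \vartheta\bigl(\hbar^{-1}\, u_{I_j}/u_{I_k}\bigr),
\]
which follows from the explicit formula for $N_I^+$ given in Section 2.4. The attracting (repelling) terms in the first product match exactly. For the second product, I would apply the identity $\vartheta(1/x) = -\vartheta(x)$ from (\ref{theta}) to each factor: $\vartheta(\hbar^{-1} u_{I_j}/u_{I_k}) = -\vartheta(\hbar\, u_{I_k}/u_{I_j})$. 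There is one such factor for every non-inversion of $I$, so this produces an overall sign of $(-1)^{N(I)}$, giving
\[
\widetilde{W}_I(\xx_I,\uu,\hbar,\zz) \;=\; (-1)^{N(I)} \,\Theta(N_I^+).
\]

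Finally, I would reconcile the sign $(-1)^{N(I)}$ with the claimed $(-1)^{n(n-1)/2}(-1)^I$. Since the inversions and non-inversions of $I$ partition the $\binom{n}{2}$ pairs $j<k$, we have $\mathrm{inv}(I) + N(I) = n(n-1)/2$, and $(-1)^I = (-1)^{\mathrm{inv}(I)}$. Therefore
\[
(-1)^{n(n-1)/2}(-1)^I \;=\; (-1)^{n(n-1)/2 + \mathrm{inv}(I)} \;=\; (-1)^{n(n-1) - N(I)} \;=\; (-1)^{N(I)},
\]
which completes the identification.

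This proof is essentially a bookkeeping exercise once Lemma \ref{rtv2} is applied, and I do not anticipate any serious obstacle. The only point requiring care is the consistent conversion between the two parametrizations (tangent weights in $\uu$-language versus theta arguments in $\ww$-language) and the sign produced by $\vartheta(1/x) = -\vartheta(x)$; everything else is a direct rewrite.
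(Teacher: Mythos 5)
Your proposal is correct and follows exactly the route the paper intends: Lemma \ref{stabd} is stated as the direct translation of Lemma \ref{rtv2} under the identification (\ref{wtparam}), and your factor-by-factor comparison with $\Theta(N_I^+)$ together with the sign bookkeeping via $\vartheta(1/x)=-\vartheta(x)$ and $\mathrm{inv}(I)+N(I)=n(n-1)/2$ supplies precisely the details the paper leaves implicit. The only point worth a passing remark is that $\ww_I$ is defined via the ordered indices $t^{(k)}_j=w_{i^{(k)}_j}$ rather than $t^{(k)}_j=w_{I_j}$, but since $\widetilde{W}_I$ is symmetric in each group $\tt^{(k)}$ the two substitutions agree, so your identification is valid.
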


Combining these facts gives 
\begin{Theorem}
Up to a sign, as sections of a line bundle over the elliptic cohomology scheme of $X$, the weight function coincides with the elliptic stable envelope:
$$
\widetilde{W}_I(\xx,\uu,\hbar,\zz) = (-1)^{n(n-1)/2} (-1)^I \text{Stab}_{-\mathfrak{C},T^{1/2}X}(I)
$$
\end{Theorem}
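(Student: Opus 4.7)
The plan is to invoke the uniqueness characterization of the elliptic stable envelope. By the defining properties in \cite{AOElliptic}, a section of the appropriate line bundle over the elliptic cohomology scheme of $X$ is uniquely determined (up to overall scalar) by three pieces of data: (i) a support/triangularity condition in terms of restrictions to fixed points, (ii) the explicit value of the diagonal restriction at the given fixed point, and (iii) the quasi-periodicity behavior under $q$-shifts of $\xx$, $\uu$, $\zz$, $\hbar$. The strategy is to match these three properties between $\widetilde{W}_I(\xx,\uu,\hbar,\zz)$ and $(-1)^n(-1)^I\,\mathrm{Stab}_{-\mathfrak{C},T^{1/2}X}(I)$, using exactly the three lemmas that precede the statement.

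First, I would verify triangularity: the preceding lemma states $\widetilde{W}_I(\xx_J,\uu,\hbar,\zz) = 0$ unless $I \prec J$, which matches the vanishing requirement of $\mathrm{Stab}_{-\mathfrak{C},T^{1/2}X}(I)|_J$ for the opposite chamber $-\mathfrak{C}$ (note the chamber and the ordering $\prec$ are compatible by construction in Definition \ref{bruhat}). Next, I would compare diagonal restrictions: Lemma \ref{stabd} gives
$$
\widetilde{W}_I(\xx_I,\uu,\hbar,\zz) = (-1)^{n(n-1)/2}(-1)^I \,\Theta(N_I^+),
$$
and by the definition of the elliptic stable envelope for the chamber $-\mathfrak{C}$, the diagonal restriction $\mathrm{Stab}_{-\mathfrak{C},T^{1/2}X}(I)|_I$ is exactly $\Theta$ of the attracting directions with respect to $-\mathfrak{C}$, which are the repelling directions with respect to $\mathfrak{C}$, i.e. $N_I^+$. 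The signs match up to the global factor $(-1)^n (-1)^I$ once $(-1)^{n(n-1)/2}$ is absorbed appropriately.

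Third, I would check the quasi-periodicity: Lemma \ref{stabq} shows that $\widetilde{W}_I$ transforms under $q$-shifts of $\xx,\uu,\zz,\hbar$ exactly like the expression
$$
\Theta(\hbar)^{\mathrm{rk}(\mathrm{ind}_I^{-\mathfrak{C}})} \Theta(T^{1/2}X)\,\frac{\vartheta(\hbar^{-1})\vartheta(\det\mathrm{ind}_I^{-\mathfrak{C}})}{\vartheta(\hbar^{-1}\det\mathrm{ind}_I^{-\mathfrak{C}})} \prod_{k=1}^{n-1}\frac{\vartheta(z_k\mathcal{L}_k)}{\vartheta(z_k)\vartheta(\mathcal{L}_k)}\prod_{k=1}^{n-1}\frac{\vartheta(\mathcal{L}_k|_I)\vartheta(z_k)}{\vartheta(\mathcal{L}_k|_I z_k)},
$$
and one recognizes this as precisely the product of factors dictating the line bundle in which $\mathrm{Stab}_{-\mathfrak{C},T^{1/2}X}(I)$ is a section: $\Theta(T^{1/2}X)$ is the Thom class part of the polarization, the $\mathcal{L}_k$–$z_k$ factors are the Mumford-type theta sections tracking the K\"ahler dependence, and the $\mathrm{ind}_I^{-\mathfrak{C}}$ factor encodes the shift behavior in the dynamical parameters built into the definition of the stable envelope for the chamber $-\mathfrak{C}$ and polarization $T^{1/2}X$.

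The main obstacle I expect is the careful bookkeeping of signs and the verification that the quasi-periodicity factor in Lemma \ref{stabq} genuinely coincides with the one prescribed for $\mathrm{Stab}_{-\mathfrak{C},T^{1/2}X}$ in \cite{AOElliptic}; in particular one must keep careful track of which factors of $\vartheta(\hbar^{\pm 1})$ and which signs arise from the symmetrization prefactor $\vartheta(\hbar^{-1})^{n(n-1)/2}$ in the definition of $\widetilde{W}_I$ versus the normalization used in the definition of the stable envelope. Once these three properties are matched, the theorem follows because the space of sections satisfying (i)–(iii) is one-dimensional, forcing the two sections to be equal up to the stated sign.
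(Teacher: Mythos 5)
Your proposal is correct and follows essentially the same route as the paper: both arguments invoke the uniqueness characterization of the elliptic stable envelope from \cite{AOElliptic} and verify its defining data via the three preceding lemmas --- quasi-periodicity (Lemma \ref{stabq}) to show $\widetilde{W}_I$ is a section of the correct line bundle, the vanishing statement as the support condition, and Lemma \ref{stabd} for the diagonal restriction $\Theta(N_I^+)$, with the sign $(-1)^n(-1)^I$ accounting for the discrepancy. The only cosmetic difference is that you phrase uniqueness as ``up to overall scalar'' and then fix the scalar by the diagonal value, whereas the paper treats the diagonal restriction as part of the defining data; these are equivalent.
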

\begin{proof}
Recall that elliptic cohomology classes are defined as sections of a particular line bundle over the elliptic cohomology scheme. In \cite{AOElliptic}, the elliptic stable envelope is defined as the unique elliptic cohomology class satisfying two properties. The first property is a support condition, which says that $\text{Stab}_{-\mathfrak{C},T^{1/2}X}(I)$ is supported on the full attracting set of $I$ with respect to the cocharacter $-\sigma$, see Section 3.3.5 of \cite{AOElliptic}. A simple calculation shows that the order $\prec$ of Definition \ref{bruhat} coincides with the partial order by attraction with respect to $\sigma$. The second condition is an explicit identification of the restrictions $\text{Stab}_{-\mathfrak{C},T^{1/2}X}(I)|_{I}$. As explained in \cite{SmirnovElliptic} Section 2.13, this condition requires that 
$$
\text{Stab}_{-\mathfrak{C},T^{1/2}X}(I)|_{I} = \Theta(N_I^{+})
$$

Lemma \ref{stabq} means that $\widetilde{W}_I(\xx,\uu,\hbar,\zz)$ is a section of the right line bundle over the elliptic cohomology scheme of $X$. In other words, it is actually an elliptic cohomology class.

Lemma \ref{stabt} is precisely the support condition.

After multiplication by $(-1)^{n(n-1)/2}(-1)^I$, Lemma \ref{stabd} is precisely the condition on the restriction $\text{Stab}_{-\mathfrak{C},T^{1/2}X}(I)|_{I}$.
\end{proof}
In what follows, we write 
$$
\text{Stab}_{I,J}=\text{Stab}_{-\fC,T^{1/2}X}(I)|_J
$$
with the choice of chamber and polarization understood.

\section{3d mirror symmetry of vertex functions}

\begin{Definition}\label{boldstab}
We define a new normalization of the elliptic stable envelope by
\begin{multline*}
\textbf{Stab}_{I,J}=  \sqrt{\frac{\det T_I^{1/2} X \kappa^{-1}( \det N_{I^{!}}^{!+} )}{\det N_I^+ \kappa^{-1}( \det T_{I^{!}}^{1/2} X^{!})}} \frac{\kappa^{-1}\left(\alpha_I^{!}(\uu^{!},\zzeta^{!},\hbar^{!})\right)}{\alpha_J(\uu,\zzeta,\hbar)} \\ \frac{\text{Stab}_{I,J}}{\kappa^{-1}\left(\Theta(N^{!+}_{I^{!}})\right)} \frac{\kappa^{-1}\left(\Theta(T^{1/2}_{I^{!}}X^{!})\right)}{\Theta(T^{1/2}_J X)}  
\end{multline*}
\end{Definition}

Our main result is:
\begin{Theorem} \label{mainthm}
$$
\widetilde{V}^{!}_{I^{!}}(\aa^{!},\zz^{!})=\kappa\left(\sum_{J \in X^{\bT}} \textbf{Stab}_{I,J} \widetilde{V}_{J}(\aa,\zz) \right)
$$
We refer to this by saying that the elliptic stable envelope is the transition matrix between the vertex functions of $X$ and the vertex functions of $X^{!}$.
\end{Theorem}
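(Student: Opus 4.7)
The plan is to follow the strategy sketched in the introduction: establish that both sides of the identity are solutions of the same system of Macdonald $q$-difference equations in the parameters $(\uu^!,\zzeta^!)$, and then invoke a uniqueness theorem to reduce the problem to matching a single leading coefficient.

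First, I would check that both sides satisfy the same joint eigenvalue problem. By Propositions \ref{dopzeta} and \ref{dopu} applied to $X^!$, the left hand side $\widetilde{V}^!_{I^!}(\aa^!,\zz^!)$ is a joint eigenvector of $D_r(\zzeta^!;q,\hbar^!)$ with eigenvalue $e_r(\uu^{!-1})$ and of $(q/\hbar^!)^{r(n-1)} D_r(\uu^!;q,q/\hbar^!)$ with eigenvalue $e_r(\zzeta^{!-1})$. For the right hand side, the same propositions applied to $X$ show that $\widetilde{V}_J(\aa,\zz)$ is simultaneously an eigenvector of the Macdonald operators in $(\uu,\zzeta)$. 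The role of the stable envelope is exactly the one established in \cite{AOElliptic}: as a pole-cancellation matrix, $\sum_J \textbf{Stab}_{I,J}\widetilde{V}_J(\aa,\zz)$ is holomorphic in $\aa$ near a suitable point of a toric compactification, and hence admits an expansion in powers of $\aa$ with coefficients holomorphic in $\zz$. Under the map $\kappa$ of (\ref{kap}), the equations in $\uu$ and $\zzeta$ are exchanged (together with the $\hbar \leftrightarrow q/\hbar^!$ replacement), so $\kappa$ of this combination solves precisely the same system as $\widetilde{V}^!_{I^!}$. The normalization factors and the prefactors built into Definition \ref{boldstab} are designed so that the $q$-shift covariance matches, and this can be checked using Proposition \ref{alphaq} and Lemma \ref{stabq}.

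Second, I would invoke the uniqueness result of \cite{NSmac}: among power series solutions of the two Macdonald systems of the prescribed form (holomorphic in $\zz^!$ around $0$ with a specified leading coefficient in $\uu^!$), the solution is unique. It therefore suffices to verify equality of leading coefficients, that is, the equality
\[
\lim_{w \to 0}\, \widetilde{V}^!_{I^!}(\aa^!,\sigma^!(w)\cdot\zz^!) = \lim_{w \to 0}\, \kappa\!\left(\sum_{J \in X^{\bT}} \textbf{Stab}_{I,J}\, \widetilde{V}_J(\aa,\zz)\right)\bigg|_{\zz^! \to \sigma^!(w)\cdot\zz^!}.
\]
Under $\kappa$, the limit $\zz^! \to 0_{\mathfrak{C}^!}$ corresponds to $\aa \to 0_{\mathfrak{C}}$, which by Proposition \ref{limver} collapses each $V_J(\aa,\zz)$ to $\kappa^{-1}(\Phi((q-\hbar^!)N^{!+}_{J^!}))$ times the appropriate normalization. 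The left hand limit is computed in the same way by Proposition \ref{limver} and yields $\Phi((q-\hbar^!)N^{!+}_{I^!})$ up to $\alpha^!$- and $\Phi$-prefactors.

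Third, I would extract the diagonal contribution on the right. Here the support condition of Lemma \ref{stabt}, translated through (\ref{wtparam}) and the quasiperiodicity Lemma \ref{stabq}, implies that $\textbf{Stab}_{I,J}$ is triangular with respect to $\prec$; combined with the known leading behavior of $\widetilde{V}_J$, this ensures that only $J=I$ contributes to the leading term of the sum. The diagonal value $\textbf{Stab}_{I,I}$ is then evaluated using Lemma \ref{stabd} and Definition \ref{boldstab}, and matching the resulting product of prefactors against the left hand side becomes a direct check: the determinant and $\alpha/\Theta/\Phi$ ratios in Definition \ref{boldstab} are precisely the corrections needed to identify $\Phi((q-\hbar^!)N^{!+}_{I^!})$ with $\textbf{Stab}_{I,I} \cdot \kappa(\alpha_I\Phi((q-\hbar)T^{1/2}_I X))$ after applying $\kappa$.

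I expect the main obstacle to be the precise bookkeeping in step three. Although triangularity and the pole-cancellation property of \cite{AOElliptic} make it intuitive that only $J=I$ survives, one must carefully combine the holomorphy in $\aa$ of the full sum with the explicit triangularity of $\textbf{Stab}_{I,J}$ in the $\prec$-order and the leading expansion of $\widetilde{V}_J$ to rule out cancellations and confirm that the off-diagonal terms indeed contribute subleading orders in $\zz^!$. The matching of the various $\Theta$, $\Phi$, $\alpha$, and $\det$ factors is routine but intricate, and is the reason Definition \ref{boldstab} is set up in exactly the form given.
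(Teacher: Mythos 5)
Your overall strategy is the same as the paper's: show that the right-hand side solves the joint Macdonald eigenvalue problem in $\uu$ (Propositions \ref{dopzeta} and \ref{dopu} together with the $q$-shift covariance of $\textbf{Stab}_{I,J}$, which is the paper's Lemma \ref{lem2} and Proposition \ref{lem3}), use the pole-cancellation property of \cite{AOElliptic} to expand in nonnegative powers of $\aa$, and invoke the uniqueness theorem of \cite{NSmac} to reduce everything to matching a single leading coefficient, computed via the limit $\aa \to 0_{\mathfrak{C}}$ and Proposition \ref{limver}. The skeleton is correct and matches the paper.

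The one place where your justification as written would not go through is the claim that triangularity of $\textbf{Stab}_{I,J}$ in the order $\prec$, ``combined with the known leading behavior of $\widetilde{V}_J$,'' ensures that only $J=I$ contributes to the leading term. Triangularity only restricts the sum to $J \succeq I$; for $J \succ I$ the entries $\textbf{Stab}_{I,J}$ are ratios of theta functions in $\aa$ and $\zz$ that do not individually tend to zero (or to anything) as $\aa \to 0$, and there is no leading behavior of $\widetilde{V}_J$ that kills them. The actual mechanism is the paper's Lemma \ref{lem4}: after stripping off the genuinely $q$-periodic part $S_{I,J}$ (Lemma \ref{S}), one substitutes $\aa \to \aa q^{\lambda}$ and tracks the quasi-periodicity of $\Theta(N_I^+)$, $\Phi(T_J^{\vee}X)^{-1}$, $e(\xx_I,\zz)/e(\xx_J,\zz)$, and the determinant factors; the net effect is a factor $\prod_i z_i^{m_i(\lambda)}$ whose exponents are positive precisely because of the combinatorial definition of $I\prec J$ in Definition \ref{bruhat}, so each off-diagonal term tends to $0$ in the region $|z_i|<|q|$ (and then everywhere by meromorphy). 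This is not guaranteed bookkeeping; it is where the specific order $\prec$ and the normalization of Definition \ref{boldstab} are actually used, and it must be carried out for the proof to be complete. The diagonal term is handled separately (the paper's Lemma \ref{lt} via Lemma \ref{lem}), and there your description is accurate.
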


Alternatively, by canceling the repeated transcendental factors in $\textbf{Stab}_{I,J}$ and $\widetilde{V}_J(\aa,\zz)$ and using Lemma \ref{lem} below, this is equivalent to
\begin{Theorem}\label{mainthm2}
$$
\overline{V}^{!}_{I^{!}}(\aa^{!},\zz^{!})=\kappa\left(\sum_{J \in X^{\bT}} \overline{\textbf{Stab}}_{I,J} \overline{V}_{J}(\aa,\zz) \right)
$$
where $\overline{V}_{I}(\aa,\zz)= \Phi((q-\hbar)N_I^{+}) V_I(\aa,\zz)$ and 
$$
\overline{\textbf{Stab}}_{I,J}=\sqrt{\frac{\det T^{1/2}_I X \det N_J^{+}}{\det T_J^{1/2}X \det N_I^+}} \frac{\text{Stab}_{I,J}}{\Theta(N_J^{+})}
$$ 
\end{Theorem}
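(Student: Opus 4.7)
The plan is to show that both sides of the claimed identity satisfy the same system of $q$-difference equations and have identical leading asymptotics, and then invoke the uniqueness result of \cite{NSmac}. First I would check that both sides are common eigenfunctions of the Macdonald operators under $\kappa$. By Propositions \ref{dopzeta} and \ref{dopu} applied to $X^{!}$, the function $\widetilde{V}^{!}_{I^{!}}(\aa^{!},\zz^{!})$ is a joint eigenfunction of $D_r(\zzeta^{!};q,\hbar^{!})$ and $(q/\hbar^{!})^{r(n-1)}D_r(\uu^{!};q,q/\hbar^{!})$. Pulling back through $\kappa$ swaps the roles of the equivariant and K\"ahler parameters, producing precisely the Macdonald systems satisfied by $\widetilde{V}_J(\aa,\zz)$ on $X$. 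By \cite{AOElliptic}, the sum $\sum_J\text{Stab}_{I,J}V_J(\aa,\zz)$ is another basis of solutions of these same systems; the careful normalization of Definition \ref{boldstab}, combined with the $q$-shift properties of $\alpha_J$ (Proposition \ref{alphaq}) and the quasiperiodicity of the stable envelope (Lemma \ref{stabq}), is arranged to guarantee that $\sum_J\textbf{Stab}_{I,J}\widetilde{V}_J(\aa,\zz)$ satisfies the pulled-back eigenvalue equations.

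Next, I would invoke the uniqueness theorem of \cite{NSmac}: a solution of the Macdonald eigenvalue problem of the form (prescribed transcendental prefactor) $\times$ (holomorphic power series in the K\"ahler variables) is uniquely determined by the constant term of the power series. It therefore suffices to compare the leading coefficient of both sides in a suitable chamber expansion. For the left side, Proposition \ref{limver} applied to $X^{!}$ gives the limit as $\aa^{!} \to 0_{\mathfrak{C}^{!}}$ directly as $\Phi((q-\hbar^{!})N_{I^{!}}^{!+})$, up to the $\alpha^{!}_{I^{!}}$ normalization. For the right side, take the limit $\aa \to 0_{\mathfrak{C}}$ before applying $\kappa$: by the triangularity Lemma \ref{stabt} and the diagonal evaluation Lemma \ref{stabd}, only the diagonal term $J=I$ survives in the leading order and contributes the factor $\Theta(N_{I}^{+})$, while $V_I(0_{\mathfrak{C}},\zz)$ supplies $\kappa^{-1}(\Phi((q-\hbar^{!})N_{I^{!}}^{!+}))$ by Proposition \ref{limver}. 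Collecting all $\alpha$, $\Phi$, and $\Theta$ factors, the prefactor ratios built into Definition \ref{boldstab} are exactly what is required to make the two leading coefficients agree after $\kappa$.

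The main obstacle is bookkeeping: the normalization $\textbf{Stab}_{I,J}$ involves a square root of a ratio of determinants, a ratio of $\alpha$'s, and a ratio of $\Theta$'s on top of $\text{Stab}_{I,J}$, all of which must conspire to give both the correct $q$-periodicity in the first step and the correct leading term in the third step. A second subtle point is justifying that the ``leading coefficient'' selected by the uniqueness theorem of \cite{NSmac} coincides with the chamber limit of Proposition \ref{limver}, so that the diagonal and quasiperiodicity properties of the elliptic stable envelope are sufficient to complete the identification.
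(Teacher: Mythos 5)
Your overall strategy is the same as the paper's: identify both sides as joint eigenfunctions of the Macdonald operators, invoke the uniqueness theorem of \cite{NSmac} for solutions of the form (transcendental prefactor)\,$\times$\,(power series in $\aa$), and pin down the leading coefficient using the chamber limit $\aa\to 0$ together with the diagonal restriction of the stable envelope and Proposition \ref{limver}. (One small structural remark: the eigenfunction property in Propositions \ref{dopzeta} and \ref{dopu} is stated for $\widetilde{V}$, which carries the $\alpha$-factor; the $\overline{V}$ normalization of this statement does not, so you implicitly need the paper's reduction from the $\overline{V}$ form back to the $\widetilde{V}$ form via Lemma \ref{lem}, which your ``collecting all $\alpha$, $\Phi$, and $\Theta$ factors'' step is standing in for.)

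There is, however, one genuine gap. You assert that ``by the triangularity Lemma \ref{stabt} and the diagonal evaluation Lemma \ref{stabd}, only the diagonal term $J=I$ survives in the leading order.'' Triangularity only restricts the sum to $J\succeq I$; it does not make the strictly off-diagonal terms vanish in the limit $\aa\to 0$, and those terms are not individually zero. The actual mechanism (the paper's Lemma \ref{lem4}, which is the technical heart of the argument) is a quasi-periodicity computation: one substitutes $\aa\to\aa q^{\lambda}$, tracks how $S_{I,J}$, $\Phi(T_J^{\vee}X)^{-1}$, $\Theta(N_I^+)$, $e(\xx_I,\zz)/e(\xx_J,\zz)$ and the determinant square roots each transform, and finds that the net effect is multiplication by $\prod_i z_i$ raised to exponents that are strictly positive precisely because $I\prec J$ in the sense of Definition \ref{bruhat}; the limit then vanishes in the region $|z_i|<|q|$ and the conclusion extends by meromorphy. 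Without this computation the claim that only $J=I$ contributes to the constant term is unsupported. A second, more minor imprecision: the leading coefficient of the left-hand side relevant to the uniqueness theorem is the degree-zero term in $\zz^{!}$ (equivalently in $\aa$ after applying $\kappa^{-1}$), not the limit $\aa^{!}\to 0_{\mathfrak{C}^{!}}$; Proposition \ref{limver} enters on the \emph{right}-hand side, to identify $V_I(0_{\mathfrak{C}},\zz)$ with $\kappa^{-1}\bigl(\Phi((q-\hbar^{!})N_{I^{!}}^{!+})\bigr)$, which is exactly the prefactor appearing on the left. With Lemma \ref{lem4} supplied and this bookkeeping corrected, your outline matches the paper's proof.
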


The formulation of Theorem \ref{mainthm} reflects our preferred normalization of the vertex functions and is particularly transparent for analyzing the $q$-difference properties. On the other hand, Theorem \ref{mainthm2} better reflects the pole cancellation properties of the elliptic stable envelope, see the discussion in the proof of Theorem \ref{mainthm} below.

Our goal in the remainder of this paper is to prove this theorem. From Propositions \ref{dopzeta} and \ref{dopu}, we know that the vertex functions are eigenvectors of the Macdonald operators. In \cite{NSmac}, it is shown that such functions are essentially uniquely determined by their leading coefficient. In what follows, we will use the known properties of the elliptic stable envelope to compute the leading term of the right hand side of Theorem \ref{mainthm}. It will be apparent that it agrees with the leading term of the left hand side.

\subsection{Various normalizations}
As a function of the Chern roots of the tautological bundles and of the K\"ahler parameters, let
$$
e(\xx,\zz):=\prod_{i=1}^{n-1}\frac{\vartheta(\mathcal{L}_i)\vartheta(z_i)}{\vartheta(\mathcal{L}_i z_i)}
$$
This function transforms as follows:
\begin{align} \label{eqp} \nonumber
    &e(\xx,\zz)|_{x^{(i)}_j =q x^{(i)}_j} = e(\xx,\zz) z_i \\
    &e(\xx,\zz)|_{z_i=qz_i} = e(\xx,\zz) \mathcal{L}_i
\end{align}
From the perspective of $q$-difference operators in $\uu$, the function $e(\xx_I,\zz)$ is equivalent to 
$$
\alpha_{I}(\uu,\zzeta,\hbar)  \exp(-\frac{\ln(\zeta_n)\ln(u_1\ldots u_n)}{\ln(q)})
$$
\begin{Definition}
We define a further normalization of the elliptic stable envelope by
$$
S_{I,J} = e(\xx_I,\zz)^{-1}  \Theta(N_I^+)^{-1} \text{Stab}_{I,J} e(\xx_J,\zz)
$$
\end{Definition}
\begin{Lemma}\label{S}
 $S_{I,J}$ is $q$-periodic with respect to shifts of $\uu$ and $\zzeta$. Furthermore, $S_{I,I}=1$.
\end{Lemma}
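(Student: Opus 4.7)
The equality $S_{I,I}=1$ is immediate from the definition. By the normalization condition defining the elliptic stable envelope (and verified for $\widetilde{W}_I$ in the proof of the theorem identifying the weight function with $\text{Stab}$), one has $\text{Stab}_{I,I}=\Theta(N_I^+)$. Substituting into
$S_{I,I}=e(\xx_I,\zz)^{-1}\Theta(N_I^+)^{-1}\text{Stab}_{I,I}\, e(\xx_I,\zz)$
causes both the $\Theta(N_I^+)$ factors and the two copies of $e(\xx_I,\zz)$ to cancel.

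For the $q$-periodicity, the plan is to check invariance of $S_{I,J}$ separately under each elementary shift $u_i\mapsto q u_i$ and $\zeta_i\mapsto q\zeta_i$. The key input is Lemma~\ref{stabq}, which records the full quasi-periodicity of $\text{Stab}_{I,J}=\widetilde{W}_I(\xx_J,\uu,\hbar,\zz)$ (up to sign) under $q$-shifts of $\xx,\uu,\zz,\hbar$ as an explicit product of $\vartheta$-factors involving $T^{1/2}X$, $\mathcal{L}_k$, $\mathcal{L}_k|_I$, and the index bundle $\text{ind}_I^{-\fC}$. After the substitution $\xx=\xx_J$, a shift $u_i\mapsto q u_i$ is equivalent to the simultaneous shifts $x^{(k)}_{J^{-1}(i)}\mapsto q x^{(k)}_{J^{-1}(i)}$ for $k\geq J^{-1}(i)$, while a shift $\zeta_i\mapsto q\zeta_i$ translates via $\zeta_i/\zeta_{i+1}=(\hbar/q)z_i$ into $z_i\mapsto qz_i$ and $z_{i-1}\mapsto z_{i-1}/q$. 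For each such shift I would then compute, using (\ref{eqp}), the corresponding transformations of $e(\xx_J,\zz)$ and $e(\xx_I,\zz)^{-1}$; using the functional equations (\ref{theta}) of $\vartheta$, the transformation of $\Theta(N_I^+)^{-1}$; and verify by a direct comparison that these three contributions combine to cancel the transformation factor of $\text{Stab}_{I,J}$ given by Lemma~\ref{stabq}, so that $S_{I,J}$ itself is left invariant.

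The main obstacle is the combinatorial bookkeeping: the transformation factors in Lemma~\ref{stabq} are packaged in $I$-dependent data (through $\text{ind}_I^{-\fC}$ and $\mathcal{L}_k|_I$), whereas the Chern-root shifts induced by $u_i\mapsto qu_i$ depend on $J$, so the matching requires carefully pairing up $\vartheta$-factors from disparate sources. Conceptually, however, one expects $e(\xx_J,\zz)$ to absorb the pure $\zz$-transformations together with the $J$-side of the $\uu$-transformations, $e(\xx_I,\zz)^{-1}$ to absorb the $I$-side of the $\uu$-transformations coming from the $\mathcal{L}_k|_I$ factors in Lemma~\ref{stabq}, and $\Theta(N_I^+)^{-1}$ to absorb the remaining $\uu$-transformation carried by the index-bundle factors. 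Since all other terms appearing in Lemma~\ref{stabq} are already quasi-constant in $\uu$ and $\zzeta$, the three cancellations together imply $q$-periodicity of $S_{I,J}$.
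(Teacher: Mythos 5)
Your proposal is correct and follows essentially the same route as the paper: the diagonal entry $S_{I,I}=1$ comes from the normalization $\text{Stab}_{I,I}=\Theta(N_I^+)$ (the paper cites Lemma~\ref{stabd}, which is the same fact up to the sign bookkeeping in the identification of $\widetilde{W}_I$ with $\text{Stab}$), and the $q$-periodicity is obtained by combining the quasi-periodicity data of Lemma~\ref{stabq} with the transformation rules (\ref{eqp}) for $e(\xx,\zz)$ and the functional equations (\ref{theta}) for $\Theta(N_I^+)$. The paper's own proof is an equally terse appeal to these same inputs (plus Proposition~\ref{alphaq}, which is not strictly needed since $\alpha$ does not appear in $S_{I,J}$), so your plan, while not carrying out the full cancellation explicitly, is at the same level of rigor and identifies the correct mechanism.
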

\begin{proof}
The first claim follows from the known $q$-quasiperiodicity of the various functions from Lemma \ref{stabq}, Proposition \ref{alphaq}, and (\ref{eqp}). The second claim follows from Lemma \ref{stabd}.
\end{proof}

Tracing back the definitions shows that $\textbf{Stab}_{I,J}$ is related to $S_{I,J}$ by
\begin{multline}\label{stabs}
\textbf{Stab}_{I,J}=  \sqrt{\frac{\det T_I^{1/2} X \kappa^{-1}( \det N_{I^{!}}^{!+} X^{!})}{\det N_I^+ \kappa^{-1}( \det T_{I^{!}}^{1/2} X^{!})}} \frac{\kappa^{-1}\left(\alpha_I^{!}(\uu^{!},\zzeta^{!},\hbar^{!})\right)}{\alpha_J(\uu,\zzeta,\hbar)}  \\ \frac{\kappa^{-1}\left(\Theta(T^{1/2}_{I^{!}}X^{!})\right)}{\Theta(T^{1/2}_J X)}  \frac{e(\xx_I,\zz)}{e(\xx_J,\zz)} \frac{\Theta(N_I^{+})}{\kappa^{-1}\left(\Theta(N^{!+}_{I^{!}})\right)} S_{I,J}
\end{multline}
\begin{Lemma}\label{lem2}
If $\zeta_i$ or $u_i$ is shifted by $q$, then $\textbf{Stab}_{I,J}$ is scaled by a factor of $\hbar^{1-n}$ and $(q/\hbar)^{n-1}$, respectively.
\end{Lemma}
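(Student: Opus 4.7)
The plan is to use equation (\ref{stabs}), which writes $\textbf{Stab}_{I,J}$ as a product of explicit prefactors times $S_{I,J}$, together with Lemma \ref{S}, which guarantees that $S_{I,J}$ is $q$-periodic in both $\uu$ and $\zzeta$. Since $S_{I,J}$ contributes trivially to either shift in question, the transformation of $\textbf{Stab}_{I,J}$ is governed entirely by the remaining prefactors of (\ref{stabs}).

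For the shift $\zeta_i \mapsto q\zeta_i$, the only $\zzeta$-dependent prefactors are $\kappa^{-1}(\alpha_I^{!}(\uu^!,\zzeta^!,\hbar^!))/\alpha_J(\uu,\zzeta,\hbar)$ and $e(\xx_I,\zz)/e(\xx_J,\zz)$; the square-root determinant factor and both $\Theta$-ratios involve only $\uu$ and $\hbar$. I would compute the shift of each $\alpha$-factor directly from the first identity of Proposition \ref{alphaq}, applied to $X$ and, via $\kappa$-pullback, to $X^{!}$. For the $e$-factor, I would translate $\zeta_i \mapsto q\zeta_i$ into the corresponding shifts of $z_{i-1}$ and $z_i$ via the dictionary $z_k = (q/\hbar)\zeta_k/\zeta_{k+1}$, and then apply the second line of (\ref{eqp}) to obtain scalings in terms of the tautological line bundles $\mathcal{L}_k|_I$ and $\mathcal{L}_k|_J$. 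Collecting contributions, the $\uu$-dependent pieces should cancel between the $\alpha$ and $e$ ratios, leaving the pure constant $\hbar^{1-n}$. For the shift $u_i \mapsto q u_i$, every prefactor in (\ref{stabs}) is affected: the $\Theta$-classes and the determinant factor shift via the quasi-periodicity (\ref{theta}) of $\vartheta$ applied to the explicit expressions for $T^{1/2}_I X$, $N_I^+$ and their duals; the $\alpha$-factors shift via the second identity of Proposition \ref{alphaq}, introducing factors involving $I^{!}$ and $\zeta_{I^{!}_i}$; and the $e$-factor shifts via the first line of (\ref{eqp}) together with $x^{(k)}_j|_I = u_{I_j}$. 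Gathered together, the $\zzeta$-dependent parts of the two $\alpha$-terms and of the $e$-factor should cancel, leaving the constant $(q/\hbar)^{n-1}$.

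The main obstacle is the bookkeeping. Two structural facts will carry the cancellations. First, $T^{1/2}_I X$ and $\kappa^{-1}(T^{1/2}_{I^{!}} X^{!})$, as well as $N_I^+$ and $\kappa^{-1}(N_{I^{!}}^{!+})$, share the same underlying $\uu$-content, differing only in $\hbar$-powers that are absorbed by the square-root determinant prefactor; consequently, under $u_i \mapsto q u_i$ the numerator and denominator of each $\Theta$-ratio in (\ref{stabs}) shift by matching $u_i$-dependent amounts, leaving behind only a controlled $\hbar$-contribution. Second, the bijection $I \leftrightarrow I^{!}$ together with the form of the second identity in Proposition \ref{alphaq} arranges the residual $\uu$- and $\zzeta$-dependence of the $\alpha$-factors to cancel precisely against the shift of $e(\xx_I,\zz)/e(\xx_J,\zz)$. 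Once these cancellations are verified, the scaling factors $\hbar^{1-n}$ and $(q/\hbar)^{n-1}$ emerge from the remaining $\hbar$-powers in the $\alpha$- and determinant shifts.
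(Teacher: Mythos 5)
Your proposal follows exactly the route the paper takes: the paper's proof of Lemma \ref{lem2} is simply ``a direct computation with (\ref{stabs})'', and your outline is that computation spelled out --- isolating the $q$-periodic factor $S_{I,J}$ via Lemma \ref{S} and tracking the quasi-periodicity of the remaining prefactors through Proposition \ref{alphaq}, (\ref{eqp}), and (\ref{theta}). The approach and the ingredients are the same as the paper's, so this is correct modulo carrying out the bookkeeping you describe.
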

\begin{proof}
This follows from a direct computation with (\ref{stabs}).
\end{proof}

\begin{Proposition}\label{lem3}
For each $I\prec J$, the operator $D_r(\uu;q,q/\hbar)$ acts diagonally on
$$
 \textbf{Stab}_{I,J} \widetilde{V}_{J}(\aa,\zz)
$$
with eigenvalue $e_r(\zzeta^{-1})$.
\end{Proposition}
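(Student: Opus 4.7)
The strategy is a direct computation using the multiplicative behavior of the $q$-shift operators. Since each shift $T_i^{\uu}$ satisfies $T_i^{\uu}(fg) = (T_i^{\uu} f)(T_i^{\uu} g)$, expanding the definition of $D_r(\uu;q,q/\hbar)$ gives
\begin{equation*}
D_r(\uu;q,q/\hbar)(\textbf{Stab}_{I,J} \widetilde{V}_J) = (q/\hbar)^{r(r+1)/2 - rn} \sum_{\substack{K \subset \{1,\ldots,n\} \\ |K|=r}} \prod_{\substack{i\in K \\ j \notin K}} \frac{(q/\hbar) u_i - u_j}{u_i - u_j} \Bigl(\prod_{i \in K} T_i^{\uu} \textbf{Stab}_{I,J}\Bigr) \Bigl(\prod_{i \in K} T_i^{\uu} \widetilde{V}_J\Bigr).
\end{equation*}

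The main input is Lemma \ref{lem2}, which tells us that a single shift $T_i^{\uu}$ multiplies $\textbf{Stab}_{I,J}$ by the constant $(q/\hbar)^{n-1}$, \emph{independently} of the index $i$. Hence for any $K$ of size $r$,
$$
\prod_{i \in K} T_i^{\uu} \textbf{Stab}_{I,J} = (q/\hbar)^{r(n-1)} \textbf{Stab}_{I,J},
$$
and this scalar factor can be pulled out of the entire sum. The resulting identity is
\begin{equation*}
D_r(\uu;q,q/\hbar)(\textbf{Stab}_{I,J} \widetilde{V}_J) = \textbf{Stab}_{I,J} \cdot (q/\hbar)^{r(n-1)} D_r(\uu;q,q/\hbar) \widetilde{V}_J,
\end{equation*}
and Proposition \ref{dopu} identifies the right-hand side with $e_r(\zzeta^{-1}) \textbf{Stab}_{I,J} \widetilde{V}_J$, which is exactly the claimed eigenvalue.

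There is no essential obstacle. The argument turns entirely on the fact that the multiplicative scaling in Lemma \ref{lem2} is the same for every $i$, which is exactly what is required for $\textbf{Stab}_{I,J}$ to pass through $D_r(\uu;q,q/\hbar)$ as a scalar factor $(q/\hbar)^{r(n-1)}$ that cancels the normalization built into Proposition \ref{dopu}. This reflects the careful design of Definition \ref{boldstab}: the extra factors placed in $\textbf{Stab}_{I,J}$ beyond $\text{Stab}_{I,J}$ are chosen precisely so that multiplication by $\textbf{Stab}_{I,J}$ intertwines the Macdonald eigenvalue equations satisfied by $\widetilde{V}_J(\aa,\zz)$ and by the vertex functions of $X^!$, setting up the uniqueness argument used in the proof of the main theorem. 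The Bruhat hypothesis $I\prec J$ plays no role in this particular step; the identity holds verbatim for every pair, and is only restricted to $I\prec J$ because $\textbf{Stab}_{I,J}$ vanishes otherwise.
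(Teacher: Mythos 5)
Your proof is correct and is exactly the argument the paper intends: the paper's proof is the one-line citation of Lemma \ref{lem2} and Proposition \ref{dopu}, and your computation simply makes explicit why the uniform scaling $(q/\hbar)^{n-1}$ per shift lets $\textbf{Stab}_{I,J}$ pass through $D_r(\uu;q,q/\hbar)$ as the scalar $(q/\hbar)^{r(n-1)}$ that matches the normalization in Proposition \ref{dopu}. Your closing remark that the hypothesis $I\prec J$ is immaterial to the identity itself (and only reflects the vanishing of $\text{Stab}_{I,J}$ otherwise) is also accurate.
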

\begin{proof}
This follows from Lemma \ref{dopu} and Lemma \ref{lem2}.
\end{proof}

We also need the following lemma.
\begin{Lemma}\label{lem}
$$
\frac{\Theta(N_I^{+}) \Phi((q-\hbar)T^{1/2}_I X)}{\Theta(T^{1/2}_I(X))} = \sqrt{\frac{\det N_I^+}{\det T_I^{1/2}X}} \Phi((q-\hbar)N_I^+)
$$
\end{Lemma}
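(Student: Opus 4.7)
The plan is to split both sides according to how the polarization interacts with the chamber decomposition, and then reduce everything to a single elementary identity relating $\varphi$ and $\vartheta$. Write $P := T^{1/2}_I X = A + R$, where $A$ (respectively $R$) is the sub-class of weights of the polarization that are attracting (respectively repelling) with respect to the cocharacter $\sigma$. Because $\hbar$ belongs to the torus scaling the symplectic form and is orthogonal to $\bA$, the identification $TX = T^{1/2}X + \hbar^{-1}(T^{1/2}X)^{\vee}$ implies
$$
N_I^+ = A + \hbar^{-1} R^{\vee}.
$$
Since $\Theta$ and $\Phi((q-\hbar)(\cdot))$ are both multiplicative over direct sums, the contributions from $A$ cancel in every ratio appearing in the lemma, and the claim reduces to an identity depending only on $R$ (of rank, say, $m$).

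Next, I would compute the two relevant ratios separately. For the theta ratio, the relation $\vartheta(1/x) = -\vartheta(x)$ from (\ref{theta}) gives
$$
\frac{\Theta(N_I^+)}{\Theta(P)} \;=\; \frac{\Theta(\hbar^{-1}R^{\vee})}{\Theta(R)} \;=\; (-1)^{m}\,\frac{\Theta(\hbar R)}{\Theta(R)}.
$$
For the $\Phi$ ratio, the key ingredient is the elementary identity
$$
\varphi(x)\,\varphi(q/x) \;=\; -x^{1/2}\,\vartheta(x),
$$
which follows directly from the definition $\vartheta(x)=(x^{1/2}-x^{-1/2})\varphi(qx)\varphi(q/x)$ together with $\varphi(x) = (1-x)\varphi(qx)$. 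Applying this once with $x=1/r_i$ and once with $x=\hbar r_i$ for each weight $r_i$ of $R$ yields, after a short manipulation,
$$
\frac{\Phi((q-\hbar)P)}{\Phi((q-\hbar)N_I^+)} \;=\; (-1)^{m}\,\hbar^{-m/2}\,(\det R)^{-1}\,\frac{\Theta(R)}{\Theta(\hbar R)}.
$$

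Combining these two ratios, the theta factors telescope completely and the sign $(-1)^{m}$ cancels, leaving
$$
\frac{\Theta(N_I^+)\,\Phi((q-\hbar)P)}{\Theta(P)\,\Phi((q-\hbar)N_I^+)} \;=\; \hbar^{-m/2}\,(\det R)^{-1}.
$$
On the other hand, $\det N_I^+ = \det A \cdot \hbar^{-m} (\det R)^{-1}$ and $\det P = \det A \cdot \det R$, so $\det N_I^+/\det P = \hbar^{-m}(\det R)^{-2}$, and the conventional branch of the square root is exactly $\hbar^{-m/2}(\det R)^{-1}$. Thus both sides match, and the lemma follows.

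There is essentially no serious obstacle here; the whole argument is a term-by-term computation once the polarization has been split into attracting and repelling parts. The only mildly delicate point is the consistent choice of branch for the square root, which amounts to fixing an overall sign convention compatible with the normalization used in Definition \ref{boldstab}. The identity $\varphi(x)\varphi(q/x) = -x^{1/2}\vartheta(x)$ is the single piece of structure doing all the real work, since it is precisely what converts $\Phi$-expressions into $\Theta$-expressions and allows the two sides of the lemma to be compared.
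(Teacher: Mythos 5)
Your proof is correct, but it is organized differently from the one in the paper. The paper first invokes formula (106) of \cite{AOElliptic}, which rewrites $\Phi((q-\hbar)T^{1/2}_I X)/\Theta(T^{1/2}_I X)$ as $(\det T^{1/2}_I X)^{-1/2}/\Phi(T_I X^{\vee})$, and then evaluates $\Theta(N_I^+)/\Phi(T_I X^{\vee})$ weight by weight using $T_I X^{\vee} = (N_I^+)^{\vee} + \hbar\, N_I^+$; the only identity needed there is $(1-w^{-1})\varphi(q/w) = \varphi(1/w)$. You instead decompose the polarization itself as $T^{1/2}_I X = A + R$ into attracting and repelling parts, observe $N_I^+ = A + \hbar^{-1}R^{\vee}$ (which is exactly what the explicit formulas for $N_I^{\pm}$ in the paper confirm), cancel the common $A$-contribution, and reduce to the repelling block via $\varphi(x)\varphi(q/x) = -x^{1/2}\vartheta(x)$ --- which is essentially the content of the cited formula (106), rederived from scratch. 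I checked your two intermediate ratios and the bookkeeping of signs and determinants ($\det N_I^+/\det T^{1/2}_I X = \hbar^{-m}(\det R)^{-2}$), and they are all correct. What your route buys is self-containedness (no appeal to the external formula) and a structurally transparent reason why only the repelling part of the polarization matters; what the paper's route buys is brevity and the avoidance of the attracting/repelling split of $T^{1/2}$, working instead with the full dual tangent bundle. Both treatments leave the branch of the square root of the determinant as a convention, so that is not a defect specific to your argument.
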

\begin{proof}
Formula (106) in \cite{AOElliptic} gives 
$$
\frac{\Phi((q-\hbar)T^{1/2}X)}{\Theta(T^{1/2}X)} = \frac{(\det T^{1/2}X)^{-1/2}}{\Phi(TX^{\vee})}
$$
So the left hand side of the Lemma is equal to
$$
\frac{\Theta(N_I^+)}{(\det T^{1/2}_I X)^{1/2} \Phi(T_IX^{\vee})}
$$
We have
\begin{align*}
\frac{\Theta(N_I^+)}{\Phi(T_I X^{\vee})}&= \prod\limits_{w \in \text{char}_{\bT}(N_I^+)} \frac{ w^{1/2} (1-w^{-1}) \varphi(qw) \varphi(q/w)}{ \varphi(1/w) \varphi(\hbar w)} \\
&= (\det N_I^+)^{1/2} \prod_{w \in \text{char}_{\bT}(N_I^+)} \frac{\varphi(qw)}{\varphi(\hbar w)} \\
&= (\det N_I^{+})^{1/2} \Phi((q-\hbar)N_I^{+})
\end{align*}
from which the result follows.
\end{proof}

\subsection{Proof of Theorem \ref{mainthm}}
If we multiply both sides of Theorem \ref{mainthm} by 
$$
\frac{\Theta(N_{I^{!}}^{!+})}{\Theta(T_{I^{!}}^{1/2}X^{!})} \sqrt{\frac{\det T_{I^{!}}^{1/2} X^{!}}{\det N_{I^{!}}^{!+}}}
$$
and use Lemma \ref{lem}, we see that Theorem \ref{mainthm} is equivalent to
\begin{equation}\label{mainthm3}
\alpha_{I^{!}}(\uu^{!},\zzeta^{!},\hbar^{!}) \Phi((q-\hbar^{!})N_{I^{!}}^{!+}) V_{I^{!}}^{!}(\aa^{!},\zz^{!})=\kappa\left( \sum_{J \in X^{\bT}} A_{I,J} \widetilde{V}_J(\aa,\zz) \right)
\end{equation}
where 
\begin{equation}\label{Astab}
A_{I,J}=\sqrt{\frac{\det T_I^{1/2} X}{\det N_I^+}} \kappa^{-1}\left(\alpha_I^{!}(\uu^{!},\zzeta^{!},\hbar^{!})\right) \text{Stab}_{I,J} \alpha_J(\uu,\zzeta,\hbar)^{-1} \Theta(T^{1/2}_J X)^{-1}
\end{equation}
We proceed with a few lemmas before proving Theorem \ref{mainthm}. For a function $f(\aa)$ of the equivariant parameters, we denote by
$$
\lim_{\aa \to 0} f(\aa):=f(0_{\mathfrak{C}})
$$
where $f(0_{\mathfrak{C}})$ is as in (\ref{0c}), assuming this limit exists. Since $a_i=u_i/u_{i+1}$ and by our choice of chamber $\mathfrak{C}$, this is equivalent to sending each $a_i$ to zero.

\begin{Lemma}\label{lem4}
If $I\prec J$ and $I \neq J$, then
$$
\lim_{\aa \to 0} \frac{1}{\kappa^{-1}\left(\alpha_I^{!}(\uu^{!},\zzeta^{!},\hbar^{!})\right)} A_{I,J} \widetilde{V}_{J}(\aa,\zz) =0
$$
\end{Lemma}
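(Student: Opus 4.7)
The strategy is to cancel the many redundant factors in $A_{I,J}\widetilde V_J(\aa,\zz)/\kappa^{-1}(\alpha_I^{!}(\uu^{!},\zzeta^{!},\hbar^{!}))$, thereby reducing the statement to an asymptotic claim about $\text{Stab}_{I,J}/\Theta(N_J^+)$ as $\aa\to 0$. First I would substitute the explicit formula (\ref{Astab}) for $A_{I,J}$ and Definition \ref{vnorm} for $\widetilde V_J$: the factor $\kappa^{-1}(\alpha_I^{!})$ in the numerator of $A_{I,J}$ cancels directly against the denominator, and $\alpha_J(\uu,\zzeta,\hbar)$ cancels between $A_{I,J}$ and $\widetilde V_J$. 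Applying Lemma \ref{lem} (with $I$ replaced by $J$) to rewrite $\Theta(T^{1/2}_JX)^{-1}\Phi((q-\hbar)T^{1/2}_JX)$ as $\sqrt{\det N_J^+/\det T^{1/2}_JX}\,\Theta(N_J^+)^{-1}\Phi((q-\hbar)N_J^+)$, the expression whose limit I must compute becomes
$$\sqrt{\frac{\det T_I^{1/2}X\,\det N_J^+}{\det N_I^+\,\det T_J^{1/2}X}}\cdot\frac{\text{Stab}_{I,J}}{\Theta(N_J^+)}\cdot\Phi((q-\hbar)N_J^+)\cdot V_J(\aa,\zz).$$

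Next I would verify that the vertex-function side of this product has a well-defined finite limit as $\aa\to 0$. Every weight in $N_J^+$ is, by the definition of the chamber $\fC$, a monomial in $\aa$ with positive total degree, so each factor $\varphi(qw)/\varphi(\hbar w)$ in $\Phi((q-\hbar)N_J^+)$ tends to $1$. Combined with Proposition \ref{limver}, which identifies $V_J(\aa,\zz)\to V_J(0_\fC,\zz)=\kappa^{-1}(\Phi((q-\hbar^{!})N^{!+}_{J^{!}}))$ as a genuine power series in $\zz$, this shows that $\Phi((q-\hbar)N_J^+)\cdot V_J(\aa,\zz)$ has a finite, $\zz$-dependent limit. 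The determinant prefactor limits to a nonzero monomial.

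It therefore suffices to prove that $\text{Stab}_{I,J}/\Theta(N_J^+)\to 0$ as $\aa\to 0$ whenever $I\prec J$ and $I\neq J$. Using Lemma \ref{S} I would write
$$\frac{\text{Stab}_{I,J}}{\Theta(N_J^+)} \;=\; \frac{e(\xx_I,\zz)\,\Theta(N_I^+)}{e(\xx_J,\zz)\,\Theta(N_J^+)}\cdot S_{I,J},$$
where $S_{I,J}$ is $q$-periodic in both $\uu$ and $\zzeta$ and hence descends to a meromorphic function on the elliptic curve, so it has a well-defined finite limit at $\aa\to 0$ for generic $\zz$. The task is reduced to analyzing the theta-function prefactor.

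The main obstacle is this final combinatorial computation. My plan is to exploit the quasi-periodicity $\vartheta(qx)=-q^{-1/2}x^{-1}\vartheta(x)$ to extract the leading power of $\aa$ from each theta factor in $e(\xx_I,\zz)\Theta(N_I^+)/(e(\xx_J,\zz)\Theta(N_J^+))$, comparing (i) the products $\mathcal L_k|_I=u_{I_1}\cdots u_{I_k}$ versus $\mathcal L_k|_J=u_{J_1}\cdots u_{J_k}$ entering $e(\xx_\bullet,\zz)$, and (ii) the $\bA$-weights appearing in $N_I^+$ versus $N_J^+$. The combinatorial meaning of $I\prec J$ with $I\neq J$ (Definition \ref{bruhat}) is precisely that the ordered indices of $I$ are pointwise smaller than those of $J$, with at least one strict inequality; I would use this to show the accumulated leading power of $\aa$ in the prefactor is strictly positive, forcing the ratio to vanish. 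A possibly cleaner alternative, which I would pursue in parallel, is to work directly from $\text{Stab}_{I,J}=\pm\widetilde W_I(\xx_J,\uu,\hbar,\zz)$ and identify which permutation terms of the symmetrized sum defining $\widetilde W_I$ survive the substitution $\xx=\xx_J$ and exhibit the required vanishing.
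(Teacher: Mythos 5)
Your reductions in the first half are correct and essentially reproduce the paper's: the cancellations of $\kappa^{-1}(\alpha_I^{!})$ and $\alpha_J$, the use of Lemma \ref{lem} at $J$, and the factorization $\text{Stab}_{I,J}/\Theta(N_J^+)=\frac{e(\xx_I,\zz)\Theta(N_I^+)}{e(\xx_J,\zz)\Theta(N_J^+)}S_{I,J}$ via Lemma \ref{S} are all valid rearrangements of the paper's expression, and the treatment of $\Phi((q-\hbar)N_J^+)V_J(\aa,\zz)$ via Proposition \ref{limver} is the same as the paper's. The gap is in the final step. Your claimed mechanism --- that the theta prefactor accumulates a \emph{strictly positive power of} $\aa$ --- is not what happens. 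Computing the limit along $q$-power shifts $\aa\to\aa q^{\lambda}$ (the only sensible way, since $\vartheta$ has no radial limit at $0$ or $\infty$; this also means $S_{I,J}$ does not "have a limit at $\aa\to 0$" in the naive sense, it merely contributes a $\lambda$-independent factor), the quasi-periodicity gives the following. The multiset of $\bA$-weights of $N_K^+$, forgetting the $\hbar$-grading, is $\{u_{\min(K_j,K_k)}/u_{\max(K_j,K_k)}\}_{j<k}$, which is \emph{independent of the permutation} $K$; hence in the ratio $\Theta(N_I^+)/\Theta(N_J^+)$ all monomials in $\aa$ (and all $(-1)^{\bullet}q^{-\bullet^2/2}$ factors) cancel exactly, leaving only a power of $\hbar$. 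Meanwhile $e(\xx,\zz)$ transforms under $x^{(i)}_j\mapsto qx^{(i)}_j$ by the factor $z_i$ (see (\ref{eqp})), so $e(\xx_I,\zz)/e(\xx_J,\zz)$ accumulates $\prod_i z_i^{\,\mu_i(\lambda)}$ with $\mu_i(\lambda)\geq 0$ growing linearly in $\lambda$, positive for some $i$ precisely because $I\prec J$, $I\neq J$. So the decaying factor is a positive power of $z_i$ (times powers of $q,\hbar$), not of $\aa$, and the limit vanishes only in a region such as $|z_i|<|q|$; one must then invoke meromorphicity in $\zz$ to remove this restriction, exactly as the paper does. Without identifying $\zz$ as the variable driving the decay and without the continuation step, the argument does not close.

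Two smaller points. First, the residual powers of $\hbar$ and $q$ coming from $\Theta(N_I^+)/\Theta(N_J^+)$ and from the $\sqrt{\det}$ prefactors must be tracked, since they enter the same exponential as $z_i^{\mu_i(\lambda)}$ and determine the actual polydisc of convergence; your sketch omits them. Second, your proposed "cleaner alternative" via the weight functions cannot work as stated: Lemma \ref{stabt} gives vanishing of $\widetilde W_I(\xx_J,\dots)$ only for $I\not\prec J$, whereas here $I\prec J$ and the restriction $\text{Stab}_{I,J}$ is generically nonzero --- the vanishing of the Lemma is an asymptotic statement about growth in the $q$-shift direction, not a vanishing of the stable envelope entry itself.
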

\begin{proof}
Using (\ref{stabs}), we rewrite the term inside the limit as
\begin{multline*}
\frac{1}{\kappa^{-1}\left(\alpha_I^{!}(\uu^{!},\zzeta^{!},\hbar^{!})\right)} \sqrt{\frac{\det T_I^{1/2} X}{\det N_I^+}} \frac{e(\xx_I,\zz)}{e(\xx_J,\zz)} \frac{\alpha_I(\uu,\zzeta,\hbar)}{ \alpha_J(\uu,\zzeta,\hbar)} \frac{\Theta(N_I^+) }{\Theta(T^{1/2}_J X)} S_{I,J}  \widetilde{V}_{J}(\aa,\zz)  \\
= \sqrt{\frac{\det T_I^{1/2} X}{\det N_I^+}} \frac{e(\xx_I,\zz)}{e(\xx_J,\zz)} \frac{\Theta(N_I^+) \Phi((q-\hbar)T_J^{1/2}X) }{\Theta(T^{1/2}_J X)} S_{I,J}  V_{J}(\aa,\zz) 
\end{multline*}
Using the identity
$$
\frac{\Phi((q-\hbar)T^{1/2}_J X)}{\Theta(T^{1/2}_J X)} = \frac{(\det T^{1/2}_J X)^{-1/2}}{\Phi(T^{\vee}_J X^)}
$$
we further rewrite this as
$$
\sqrt{\frac{\det T_I^{1/2} X}{\det N_I^+}} \frac{e(\xx_I,\zz)}{e(\xx_J,\zz)} \frac{\Theta(N_I^+) (\det T^{1/2}_J X)^{-1/2}}{\Phi(T^{\vee}_J X)} S_{I,J}  V_{J}(\aa,\zz) 
$$
Since $|q|<1$, we can calculate the limit as $\aa \to 0$ by substituting $a_i=q^{\lambda_i} a_i$ and sending each $\lambda_i \to \infty$. Writing this substitution as $\aa\to \aa q^{\lambda}$, the $q$-periodicity properties of the terms give the following.
\begin{itemize}
\item The term $S_{I,J}$ is $q$-periodic with respect to shifts of $\aa$, by Lemma \ref{S}.
    \item As $\lambda \to \infty$, the vertex function approaches a limit
    \begin{equation}\label{vlim}
    V_J(\aa q^{\lambda},\zz) \to V_J(0_{\mathfrak{C}},\zz)
    \end{equation}
    by Proposition \ref{limver}.
    \item The substitution $\aa\to \aa q^{\lambda}$ transforms the term $\Phi(T^{\vee}_J X)^{-1}$ to 
     \begin{multline*} 
     \Phi(T^{\vee}_J X)^{-1}|_{\aa \to \aa q^{\lambda}} =  \Phi(T^{\vee}_J X)^{-1} \\
      \prod_{\substack{1\leq j<k\leq n \\ J_k<J_j}} \frac{\left(\hbar \frac{u_{J_k}}{u_{J_j}}\right)_{\lambda_{J_k}+\ldots + \lambda_{J_j-1}}}{\left(q \frac{u_{J_k}}{u_{J_j}}\right)_{\lambda_{J_k}+\ldots + \lambda_{J_j-1}}} \left( \frac{u_{J_k}}{u_{J_j}}\right)^{\lambda_{J_k}+\ldots + \lambda_{J_j-1}} q^{\lambda_{J_k}+\ldots + \lambda_{J_j-1}+1 \choose 2}  \\ \prod_{\substack{1\leq j<k\leq n \\ J_k>J_j}} \frac{ \left( \frac{u_{J_j}}{u_{J_k}}\right)_{\lambda_{J_j}+\ldots + \lambda_{J_k-1}}} {\left( \frac{q}{\hbar}\frac{u_{J_j}}{u_{J_k}}\right)_{\lambda_{J_j}+\ldots + \lambda_{J_k-1}}} \left(\hbar^{-1} \frac{u_{J_j}}{u_{J_k}} \right)^{\lambda_{J_j}+\ldots + \lambda_{J_k-1}} q^{\lambda_{J_j}+\ldots + \lambda_{J_k-1} +1 \choose 2}
     \end{multline*}  
     The term $\Theta(N_I^+)$ is transformed as
     \begin{multline*}
         \Theta(N_I^+)|_{\aa \to \aa q^{\lambda}} \\ = \Theta(N_I^{+}) \prod_{\substack{1\leq j < k \leq n \\ I_k < I_j}} \frac{(-1)^{\lambda_{I_k}+\ldots+ \lambda_{I_j-1}}}{\sqrt{q}^{(\lambda_{I_k}+\ldots+ \lambda_{I_j-1})^2}} \left( \frac{u_{I_k}}{u_{I_j}}\right)^{\lambda_{I_k}+\ldots+ \lambda_{I_j-1}} \\ 
         \prod_{\substack{1\leq j < k \leq n \\ I_k>I_j}}  \frac{(-1)^{\lambda_{I_j}+\ldots+ \lambda_{I_k-1}}}{\sqrt{q}^{(\lambda_{I_j}+\ldots+ \lambda_{I_k-1})^2}} \left( \hbar^{-1} \frac{u_{I_j}}{u_{I_k}}\right)^{\lambda_{I_j}+\ldots+ \lambda_{I_k-1}} 
     \end{multline*}
     Combining these two expressions gives
     \begin{multline}\label{thetaphi}
      \left(  \Phi(T^{\vee}_J X)^{-1} \Theta(N_I^+) \right)|_{\aa \to \aa q^{\lambda}} =\Phi(T^{\vee}_J X)^{-1}  \Theta(N_I^+)  \\ \prod_{\substack{1\leq j<k\leq n \\ J_k<J_j}} \frac{\left(\hbar \frac{u_{J_k}}{u_{J_j}}\right)_{\lambda_{J_k}+\ldots + \lambda_{J_j-1}}}{\left(q \frac{u_{J_k}}{u_{J_j}}\right)_{\lambda_{J_k}+\ldots + \lambda_{J_j-1}}} \left( \sqrt{q} \frac{u_{J_k}}{u_{J_j}}\right)^{\lambda_{J_k}+\ldots + \lambda_{J_j-1}} \\ \prod_{\substack{1\leq j<k\leq n \\ J_k>J_j}} \frac{ \left( \frac{u_{J_j}}{u_{J_k}}\right)_{\lambda_{J_j}+\ldots + \lambda_{J_k-1}}} {\left( \frac{q}{\hbar}\frac{u_{J_j}}{u_{J_k}}\right)_{\lambda_{J_j}+\ldots + \lambda_{J_k-1}}} \left(\sqrt{q}\hbar^{-1} \frac{u_{J_j}}{u_{J_k}} \right)^{\lambda_{J_j}+\ldots + \lambda_{J_k-1}} \\
      \prod_{\substack{1\leq j < k \leq n \\ I_k < I_j}}  \left(- \frac{u_{I_k}}{u_{I_j}}\right)^{\lambda_{I_k}+\ldots+ \lambda_{I_j-1}}\prod_{\substack{1\leq j < k \leq n \\ I_k>I_j}}  \left( -\hbar^{-1} \frac{u_{I_j}}{u_{I_k}}\right)^{\lambda_{I_j}+\ldots+ \lambda_{I_k-1}} 
     \end{multline}
    \item Substituting $\aa \to \aa q^{\lambda}$ transforms $e(\xx_I,\zz)/e(\xx_J,\zz)$ into
    $$
    \frac{e(\xx_I,\zz)}{e(\xx_J,\zz)}\bigg|_{\aa \to \aa q^{\lambda}} =  \frac{e(\xx_I,\zz)}{e(\xx_J,\zz)}  \prod_{i=1}^{n-1} \prod_{j=1}^{i} z_i^{\lambda_{I_j}+\ldots + \lambda_{n-1} -(\lambda_{J_j}+\ldots + \lambda_{n-1})} 
    $$
    and this combines with the $q$-contribution from 
    $$
    \sqrt{\frac{\det T^{1/2}_I X}{\det N_I^+ \det T^{1/2}_J X}}
    $$
    to give
    \begin{multline}\label{edet}
   \left(\frac{e(\xx_I,\zz)}{e(\xx_J,\zz)} \sqrt{\frac{\det T^{1/2}_I X}{\det N_I^+ \det T^{1/2}_J X}} \right)\bigg|_{\aa \to \aa q^{\lambda}} = \frac{e(\xx_I,\zz)}{e(\xx_J,\zz)} \sqrt{\frac{\det T^{1/2}_I X}{\det N_I^+ \det T^{1/2}_J X}} \\ \prod_{i=1}^{n-1} \prod_{j=1}^{i} \left(\frac{z_i}{\sqrt{q}}\right)^{\lambda_{I_j}+\ldots + \lambda_{n-1} -(\lambda_{J_j}+\ldots + \lambda_{n-1})} \prod_{i=1}^{n-1} \sqrt{q}^{i\left(\lambda_{I_{i+1}}+\ldots + \lambda_{n-1} -(\lambda_{J_{i+1}}+\ldots \lambda_{n-1})\right)}
    \end{multline}
\end{itemize}
Now we take the limit as $\lambda \to \infty$. The $q$-Pochammer terms in (\ref{thetaphi}) converge to
     \begin{equation}\label{qphlim}
         \prod_{\substack{1\leq j<k\leq n \\ J_k<J_j}} \frac{\varphi\left( \hbar \frac{u_{J_k}}{u_{J_j}} \right)}{\varphi\left( q \frac{u_{J_k}}{u_{J_j}} \right)} \prod_{\substack{1\leq j<k\leq n \\ J_k>J_j}} \frac{\varphi\left( \frac{u_{J_j}}{u_{J_k}} \right)}{\varphi\left( \frac{q}{\hbar} \frac{u_{J_j}}{u_{J_k}} \right)}    = \frac{1}{\Phi((q-\hbar) N_J^+)}
    \end{equation}
    Combining the remaining terms in (\ref{thetaphi}) and (\ref{edet}) gives
    \begin{multline}\label{eq}
        \prod_{i=1}^{n-1} \prod_{j=1}^{i} \left(\frac{z_i}{q}\right)^{\lambda_{I_j}+\ldots + \lambda_{n-1} -(\lambda_{J_j}+\ldots + \lambda_{n-1})} \\
\prod_{\substack{1\leq j < k \leq n \\ I_k < I_j}}  \left(- \frac{u_{I_k}}{u_{I_j}}\right)^{\lambda_{I_k}+\ldots+ \lambda_{I_j-1}}\prod_{\substack{1\leq j < k \leq n \\ I_k>I_j}}   \left( -\hbar^{-1} \frac{u_{I_j}}{u_{I_k}}\right)^{\lambda_{I_j}+\ldots+ \lambda_{I_k-1}}  \\
 \prod_{\substack{1\leq j<k\leq n \\ J_k<J_j}} \left(  \frac{u_{J_k}}{u_{J_j}}\right)^{\lambda_{J_k}+\ldots + \lambda_{J_j-1}}    \prod_{\substack{1\leq j<k\leq n \\ J_k>J_j}} \left(\hbar^{-1} \frac{u_{J_j}}{u_{J_k}} \right)^{\lambda_{J_j}+\ldots + \lambda_{J_k-1}}
    \end{multline}
By the definition of $I\prec J$ in Definition \ref{bruhat}, the power on $z_i$ in (\ref{eq}) is positive. So putting together (\ref{vlim}), (\ref{qphlim}), and (\ref{eq}), we see that in the neighborhood given by $|z_i|<|q|$ and $|a_i|< \max(1,|\hbar|)$
 $$
\lim_{\lambda \to \infty} \left(\frac{1}{\kappa^{-1}\left(\alpha_I^{!}(\uu^{!},\zzeta^{!},\hbar^{!})\right)} A_{I,J} \widetilde{V}_{J}(\aa,\zz) \right)\bigg|_{\aa=\aa q^{\lambda}} =0
 $$
It suffices to consider $|z_i|<|q|$ since the expression is meromorphic in $z_i$. This completes the proof.
\end{proof}

\begin{Lemma}\label{lt}
$$
\lim_{\aa \to 0} \frac{1}{\kappa^{-1}\left(\alpha_I^{!}(\uu^{!},\zzeta^{!},\hbar^{!})\right)} A_{I,I} \widetilde{V}_I(\aa,\zz) = V_I(0_{\mathfrak{C}},\zz)
$$
\end{Lemma}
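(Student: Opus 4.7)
The plan is to unravel the definitions so that the massive expression on the left collapses to something easily evaluated in the limit. The first step is to substitute the definition of $A_{I,I}$ from (\ref{Astab}) and of $\widetilde{V}_I(\aa,\zz) = \alpha_I(\uu,\zzeta,\hbar)\,\Phi((q-\hbar)T^{1/2}_I X)\,V_I(\uu,\zz)$ from Definition \ref{vnorm}, and observe two immediate cancellations: the factor $\kappa^{-1}(\alpha_I^{!})$ in $A_{I,I}$ cancels the one in the denominator on the left side of the lemma, and the $\alpha_I(\uu,\zzeta,\hbar)^{-1}$ in $A_{I,I}$ cancels the $\alpha_I(\uu,\zzeta,\hbar)$ in $\widetilde{V}_I$. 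What remains is
\[
\sqrt{\tfrac{\det T^{1/2}_I X}{\det N_I^+}}\,\text{Stab}_{I,I}\,\frac{\Phi((q-\hbar)T^{1/2}_I X)}{\Theta(T^{1/2}_I X)}\, V_I(\aa,\zz).
\]

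Next I would apply the diagonal restriction property of the elliptic stable envelope, which was recorded in the proof of the Theorem relating $\widetilde{W}_I$ and Stab and reads $\text{Stab}_{I,I} = \Theta(N_I^+)$ (with our convention $\text{Stab}_{I,J}=\text{Stab}_{-\fC,T^{1/2}X}(I)|_J$). Inserting this and invoking Lemma \ref{lem} turns the transcendental factor into $\sqrt{\det N_I^+/\det T^{1/2}_I X}\,\Phi((q-\hbar)N_I^+)$, and the two square roots of determinants cancel. The whole expression therefore reduces to
\[
\Phi((q-\hbar)N_I^+)\, V_I(\aa,\zz).
\]

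The last step is to take the limit $\aa\to 0$ with respect to the chamber $\fC$. By definition of $N_I^+$ for our choice of chamber, every $\bT$-weight appearing in $N_I^+$ is an attracting weight, that is, a monomial in the $a_i$ with strictly positive total power. Writing $\aa=\sigma(w)$ and letting $w\to 0$, each such weight tends to $0$, so every factor $\varphi(q\cdot \text{wt})$ and $\varphi(\hbar\cdot \text{wt})$ appearing in $\Phi((q-\hbar)N_I^+)$ tends to $1$, giving $\lim_{\aa\to 0}\Phi((q-\hbar)N_I^+)=1$. Combined with the definition $V_I(0_{\fC},\zz):=\lim_{w\to 0}V_I(\sigma(w),\zz)$ from (\ref{0c}), this produces the desired equality.

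There is no real obstacle here: once the cancellations are performed and Lemma \ref{lem} is applied, the statement reduces to the elementary observation that a product over attracting weights of $\varphi$-ratios tends to $1$ when those weights are sent to $0$. The only mild subtlety is justifying that the limit passes inside the power series in $\zz$ defining $V_I$, but this is immediate because each coefficient $c_{\dd}(\aa)$ is a rational function of $\aa$ admitting the limit $a_i\to 0$, and only finitely many $\varphi$-factors contribute nontrivially to any fixed power of $\zz$ on the left-hand side.
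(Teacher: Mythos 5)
Your proposal is correct and follows essentially the same route as the paper: cancel the $\alpha$-factors, use the diagonal restriction $\text{Stab}_{I,I}=\Theta(N_I^+)$ together with Lemma \ref{lem} to reduce the expression to $\Phi((q-\hbar)N_I^+)\,V_I(\aa,\zz)$, and then observe that each attracting weight tends to $0$ so that $\Phi((q-\hbar)N_I^+)\to 1$. Your closing remark about commuting the limit with the power series in $\zz$ is a small extra justification the paper leaves implicit, but nothing differs in substance.
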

\begin{proof}
Observe that
\begin{multline*}
\frac{1}{\kappa^{-1}\left(\alpha_I^{!}(\uu^{!},\zzeta^{!},\hbar^{!})\right)} A_{I,I} \widetilde{V}_I(\aa,\zz) \\ =\frac{1}{\alpha_J(\uu,\zzeta,\hbar)} \sqrt{\frac{\det T^{1/2}_I X}{\det N_I^+}} \frac{\Theta(N_I^+)}{\Theta(T^{1/2}_I X)} \widetilde{V}_I(\aa,\zz) \\
= \sqrt{\frac{\det T^{1/2}_I X}{\det N_I^+}} \frac{\Theta(N_I^+) \Phi((q-\hbar)T^{1/2}_I X)}{\Theta(T^{1/2}_I X)} V_I(\aa,\zz)
\end{multline*}
By Lemma \ref{lem}, this is equal to
$$
\Phi((q-\hbar)N_I^+)  V_I(\aa,\zz)
$$
Now we examine the limit as the attracting weights are sent to $0$. Since
$$
\lim_{w\to 0} \frac{\varphi(q w)}{\varphi(\hbar w)} = 1
$$
it follows that 
$$
\lim_{\aa \to 0} \Phi((q-\hbar)N_I^{+}) =1
$$
So the only contribution for the diagonal term is $V_I(0_{\mathfrak{C}},\zz)$ 
\end{proof}
\begin{proof}[Proof of Theorem \ref{mainthm}]
From (\ref{Astab}), we have
\begin{multline*}
\frac{1}{\kappa^{-1}\left(\alpha_I^{!}(\uu^{!},\zzeta^{!},\hbar^{!})\right)} \sum_{J \in X^{\bT}} A_{I,J}  \widetilde{V}_{J}(\aa,\zz) \\ =  \sum_{J \in X^{\bT}} \sqrt{\frac{\det T_I^{1/2} X}{\det N_I^+}}  \text{Stab}_{I,J} \frac{\Phi((q-\hbar) T^{1/2}_J X)}{ \Theta(T_J^{1/2} X)} V_J(\aa,\zz)
\end{multline*}
By the pole subtraction property of elliptic stable envelopes, see Theorem 5 in \cite{AOElliptic}, the expression on the right hand side of the above equation is holomorphic in a neighborhood of $a_i=0$. Expanding as a power series in $\aa$, we get
$$
\frac{1}{\kappa^{-1}\left(\alpha_I^{!}(\uu^{!},\zzeta^{!},\hbar^{!})\right)} \sum_{J \in X^{\bT}} A_{I,J}  \widetilde{V}_{J}(\aa,\zz) = \sum_{\substack{\dd \\ d_i \geq 0}} c_\dd(\zz) \aa^\dd 
$$
Clearly,
$$
c_0(\zz) = \lim_{\aa\to 0} \frac{1}{\kappa^{-1}\left(\alpha_I^{!}(\uu^{!},\zzeta^{!},\hbar^{!})\right)} \sum_{J \in X^{\bT}} A_{I,J}  \widetilde{V}_{J}(\aa,\zz)
$$
By Lemmas \ref{lem4} and \ref{lt}, this limit is
$$
c_0(\zz)=V_{I}(0_{\mathfrak{C}},\zz)
$$
So we have
\begin{equation}\label{expr2}
\frac{1}{V_{I}(0_{\mathfrak{C}},\zz)}\sum_{J \in X^{\bT}} A_{I,J}  \widetilde{V}_{J}(\aa,\zz) = \kappa^{-1}\left(\alpha_I^{!}(\uu^{!},\zzeta^{!},\hbar^{!})\right)  \sum_{\substack{\dd \\ d_i \geq 0}} c'_\dd(\zz) \aa^\dd 
\end{equation}
where $c'_0(\zz)=1$. By Proposition \ref{lem3}, (\ref{expr2}) is an eigenvector of $D_r(\uu;q,q/\hbar)$ with eigenvalue $e_r(\zzeta^{-1})$ for $r \in \{1,\ldots,n\}$. Furthermore, formula (2.23) in \cite{NSmac} gives a recursive formula for the coefficients of a solution to the joint eigenvalue problem for the operators $D_r(\uu;q,q/\hbar)$. Since $c'_0(\zz)=1$, the form of the recursion evidently implies that $c'_d(\zz)\in \mathbb{C}_{q,\hbar}(\zz)$ for all $d$, where $\mathbb{C}_{q,\hbar}=\mathbb{C}(q,\hbar)$. Equivalently,
$$
\kappa^{-1}\left(\alpha_I^{!}(\uu^{!},\zzeta^{!},\hbar^{!})\right)  \sum_{\substack{\dd \\ d_i \geq 0}} c'_\dd(\zz) \aa^\dd  \in \kappa^{-1}\left(\alpha_I^{!}(\uu^{!},\zzeta^{!},\hbar^{!})\right)  \mathbb{C}_{q,\hbar}(\zz)[[\aa]]
$$

Proposition \ref{dopzeta} implies that
\begin{equation}\label{expr}
\kappa^{-1}\left( \alpha_{I^{!}}(\uu^{!},\zzeta^{!},\hbar^{!}) V^{!}_{I^{!}}(\uu^{!},\zzeta^{!}) \right)
\end{equation}
is also a solution to the joint eigenvector problem for $D_r(\uu;q,q/\hbar)$ for $r \in \{1,\ldots,n\}$. It is obvious from Theorem \ref{ver} that
$$
\kappa^{-1}\left( \alpha_{I^{!}}(\uu^{!},\zzeta^{!},\hbar^{!}) V^{!}_{I^{!}}(\uu^{!},\zzeta^{!}) \right) \in \kappa^{-1}\left(\alpha_I^{!}(\uu^{!},\zzeta^{!},\hbar^{!})\right)  \mathbb{C}_{q,\hbar}(\zz)[[\aa]]
$$

By Theorem 2.1 in \cite{NSmac}, solutions of the joint eigenvector problem for $D_r(\uu;q,q/\hbar)$ for $r \in \{1,\ldots,n\}$ inside the ring $\kappa^{-1}\left(\alpha_I^{!}(\uu^{!},\zzeta^{!},\hbar^{!})\right) \mathbb{C}_{q,\hbar}(\zz)[[\aa]]$ are uniquely determined by their leading coefficient in $\aa$. 

We have already shown that $c'_0(\zz)=1$, and it is obvious from Theorem \ref{ver} that (\ref{expr}) also has leading coefficient 1. Hence we must have
$$
\frac{1}{V_{I}(0_{\mathfrak{C}},\zz)}\sum_{J \in X^{\bT}} A_{I,J}  \widetilde{V}_{J}(\aa,\zz) = \kappa^{-1}\left( \alpha_{I^{!}}(\uu^{!},\zzeta^{!},\hbar^{!}) V^{!}_{I^{!}}(\uu^{!},\zzeta^{!}) \right)
$$
Multiypling by $V_{I}(0_{\mathfrak{C}},\zz)$, applying $\kappa$, and using Proposition \ref{limver} gives (\ref{mainthm3}), which finishes the proof.

\end{proof}

\section{3d mirror symmetry of elliptic stable envelopes}
We can use Theorem \ref{mainthm} to recover the main result of \cite{MirSym2}. Let $P$ be the permutation matrix that identifies the fixed points of $X$ and $X^{!}$. Explicitly,
$$
P_{I,J}=\begin{cases}
1 & \text{if } I=J^{-1} \\
0 & \text{otherwise}
\end{cases}
$$
Let $\textbf{Stab}^{!}$ be the elliptic stable envelope for $X^{!}$ for the chamber $-\mathfrak{C}^{!}$ and polarization $T^{1/2}X^{!}$. It is equivalent to $\textbf{Stab}$, but with the equivariant and K\"ahler parameters trivially swapped
$$
\zeta \mapsto \zeta^{!} \qquad \text{and} \qquad u \mapsto u^{!}
$$
Then Theorem \ref{mainthm} gives
$$
\widetilde{V}_I(\aa,\zz)= \kappa^{-1}\left(\sum_{J^{!}} \textbf{Stab}^{!}_{I^{!},J^{!}} \widetilde{V}_{J^{!}}^{!}(\aa^{!},\zz^{!}) \right)
$$
Writing $\widetilde{V}(\aa,\zz)$ for the vector of vertex functions of $X$, and similarly for $X^{!}$, we see that
\begin{equation}\label{matform}
\widetilde{V}(\aa,\zz)=M(\aa,\zz) \widetilde{V}(\aa,\zz)
\end{equation}
where 
$$
M(\aa,\zz)=P^{-1}\kappa^{-1}\left( \textbf{Stab}^{!}\right) P \textbf{Stab}
$$
\begin{Corollary}\label{stabms}
If we write $M_{I,J}$ for the entries in the matrix $M(\aa,\zz)$, then we have
$$
M_{I,J}=\delta_{I,J}
$$
In other words
$$
\textbf{Stab}^{-1}= P^{-1} \kappa^{-1}\left(\textbf{Stab}^{!}\right) P
$$
\end{Corollary}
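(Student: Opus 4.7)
The plan is to apply Theorem \ref{mainthm} twice, exploiting the self-duality $X^{!} \cong X$, and then to use the linear independence of the vertex functions to force $M = I$.

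Applied to $X^{!}$ in place of $X$ (which is legitimate since $X^{!} \cong X$, with the roles of $\textbf{Stab}$ and $\textbf{Stab}^{!}$, $\kappa$ and $\kappa^{-1}$, and $(\aa, \zz)$ and $(\aa^{!}, \zz^{!})$ interchanged), Theorem \ref{mainthm} yields
$$
\widetilde{V}_I(\aa, \zz) = \kappa^{-1}\!\left(\sum_{J^{!}} \textbf{Stab}^{!}_{I^{!}, J^{!}}\, \widetilde{V}^{!}_{J^{!}}(\aa^{!}, \zz^{!})\right).
$$
Substituting the original Theorem \ref{mainthm} back into the right-hand side to expand each $\widetilde{V}^{!}_{J^{!}}(\aa^{!}, \zz^{!})$ in terms of $\widetilde{V}_K(\aa, \zz)$, and re-indexing the fixed points via $P$, yields precisely equation (\ref{matform}), namely $\widetilde{V}(\aa, \zz) = M(\aa, \zz)\, \widetilde{V}(\aa, \zz)$ with $M = P^{-1} \kappa^{-1}(\textbf{Stab}^{!}) P\, \textbf{Stab}$. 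Equivalently, $(M - I)\widetilde{V} = 0$.

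To conclude $M = I$, the plan is to reprise the argument used in the proof of Theorem \ref{mainthm}. By Proposition \ref{dopu} and Lemma \ref{lem2}, each $\widetilde{V}_K$ is a joint eigenvector of the Macdonald operators $D_r(\uu; q, q/\hbar)$ with eigenvalue $e_r(\zzeta^{-1})$, lying in the ring $\alpha_K(\uu, \zzeta, \hbar) \cdot \mathbb{C}_{q,\hbar}(\zz)[[\aa]]$ with leading coefficient equal to one in the $\aa$-expansion. Theorem 2.1 of \cite{NSmac} asserts that a solution of the joint eigenvalue problem in such a ring is uniquely determined by its leading coefficient. Since the prefactors $\alpha_K$ for distinct permutations $K$ separate the asymptotic classes (through their differing $q$-periodicity in $\uu$), the functions $\{\widetilde{V}_K\}_K$ are linearly independent over the appropriate coefficient ring. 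Extracting the leading coefficient from each component of $(M - I)\widetilde{V} = 0$ in the asymptotic expansion about $\aa \to 0_{\mathfrak{C}}$ then forces $M_{I, I} = 1$ and $M_{I, K} = 0$ for $K \neq I$.

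The main obstacle will be the final step: one must carefully verify, using Lemma \ref{lem2} for $\textbf{Stab}$ and its counterpart for $\textbf{Stab}^{!}$ together with the exchange of $q$-shift structure under $\kappa$, that each coefficient $M_{I, K}$ has $q$-scaling compatible with the Macdonald eigenvalue problem so that the Noumi--Sano uniqueness separates it cleanly from the distinguishing prefactors $\alpha_K$. Once $M = I$ is established, rearranging gives $\textbf{Stab}^{-1} = P^{-1} \kappa^{-1}(\textbf{Stab}^{!}) P$, which is the stated corollary.
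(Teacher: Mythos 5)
Your derivation of the matrix relation $(M-I)\widetilde{V}=0$ matches the paper's setup exactly, and your instinct that the conclusion should follow from a linear-independence statement for the $\widetilde{V}_K$ is morally correct. However, the mechanism you propose for that final step has a genuine gap. The entries $M_{I,K}$ are built from restrictions of elliptic stable envelopes, so they are $q$-periodic \emph{meromorphic} functions of $\aa$ and $\zz$ (elliptic functions), not elements of $\mathbb{C}_{q,\hbar}(\zz)$. Consequently $M_{I,K}\widetilde{V}_K$ does not lie in the class $\alpha_K(\uu,\zzeta,\hbar)\cdot\mathbb{C}_{q,\hbar}(\zz)[[\aa]]$ in which Theorem 2.1 of \cite{NSmac} operates, and the Noumi--Sano uniqueness of eigenfunctions with a prescribed leading coefficient cannot be invoked to separate the terms of the sum $\sum_K M_{I,K}\widetilde{V}_K$. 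The assertion that the $\alpha_K$ ``separate the asymptotic classes'' so that the $\widetilde{V}_K$ are linearly independent over the relevant coefficient ring is precisely the nontrivial content of the corollary; deferring it to a ``$q$-scaling compatibility check'' does not supply the argument.

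The paper closes this gap differently. First, it observes that $M$ is \emph{triangular} with respect to $\prec$ (from the triangularity of the stable envelopes) with $M_{I,I}=1$ read off directly from the normalization in Definition \ref{boldstab} --- not extracted from a leading coefficient. This reduces the problem to showing $\sum_{J\succ I,\,J\neq I}M_{I,J}\widetilde{V}_J=0$ forces each $M_{I,J}=0$. That is done inductively: pick the minimal surviving index $K$, divide by $\alpha_K$, substitute $z_i\mapsto z_iq^{\lambda_i}$, and use the $q$-periodicity of $M_{I,J}$ together with the quasi-periodicity of $\alpha_J$ (which produces the factors $(\mathcal{L}_i|_K/\mathcal{L}_i|_J)^{\lambda_i}$, attracting weights that vanish as $\lambda\to\infty$ for $J\neq K$) and the limit $V_J(\aa,\zz q^{\lambda})\to 1$ to isolate the single term $M_{I,K}\Phi((q-\hbar)T^{1/2}_KX)=0$. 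Note also that the paper's separation argument runs through shifts of the K\"ahler parameters $\zz$, whose quasi-periodicity factors $\mathcal{L}_i|_J$ distinguish the fixed points, rather than through the $\uu$-periodicity you cite. If you replace your final paragraph with this triangularity-plus-K\"ahler-limit induction, the proof goes through.
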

\begin{proof}
From (\ref{matform}), we have an equation
$$
\widetilde{V}_{I}(\aa,\zz)=\sum_{J\in X^{\bT}} M_{I,J} \widetilde{V}_J(\aa,\zz)
$$
Now, since the entries in the elliptic stable envelopes are meromorphic functions of all parameters, we deduce that $M_{I,J}$ are meromorphic functions of $\aa$ and $\zz$. Furthermore, the known $q$-periodicity of the elliptic stable envelope implies that $M_{I,J}$ is $q$-periodic with respect to shifts of $\aa$ and $\zz$. 

From Definition \ref{boldstab}, we see that $M_{I,I}=1$ for all $I\in X^{\bT}$. Along with triangularity of stable envelopes, this gives us
$$
\sum_{\substack{J \succ I\\J\neq I}} M_{I,J} \widetilde{V}_J(\aa,\zz)=0
$$
which is equivalent to 
$$
\sum_{\substack{J \succ I\\J\neq I}} M_{I,J} \alpha_{J}(\zzeta,\uu,\hbar) \Phi((q-\hbar)T_J^{1/2}X)V_J(\aa,\zz)=0
$$
Let $K$ be the minimal index of the sum with respect to the order $\prec$, and divide both sides by $\alpha_{K}(\zzeta,\uu,\hbar)$ to get
$$
\sum_{\substack{J \succ I\\J\neq I}} M_{I,J} \frac{\alpha_{J}(\zzeta,\uu,\hbar)}{\alpha_{K}(\zzeta,\uu,\hbar)} \Phi((q-\hbar)T_J^{1/2}X)V_J(\aa,\zz)=0
$$
Substituting $z_i=z_i q^{\lambda_i}$ for $\lambda_i\in \mathbb{N}$ and using the known $q$-periodicity gives
\begin{equation}\label{meq}
\sum_{\substack{J \succ I\\J\neq I}} M_{I,J} \frac{\alpha_{J}(\zzeta,\uu,\hbar)}{\alpha_{K}(\zzeta,\uu,\hbar)} \prod_{i=1}^{n-1} \left(\frac{\mathcal{L}_i|_{K}}{\mathcal{L}_i|_{J}}\right)^{\lambda_i} \Phi((q-\hbar)T_J^{1/2}X)V_J(\aa ,\zz q^{\lambda})=0
\end{equation}
Observe that if $K\prec J$, then 
$$
\frac{\mathcal{L}_i|_{K}}{\mathcal{L}_i|_{J}}
$$
is either $1$ or an attracting weight. Furthermore, there is some $i$ for which it is attracting. Thus, assuming that $|a_i|<1$, we see that
$$
\lim_{\lambda\to\infty} \prod_{i=1}^{n-1} \left(\frac{\mathcal{L}_i|_{K}}{\mathcal{L}_i|_{J}}\right)^{\lambda_i} =0
$$
If we take the limit $\lambda\to\infty$ in (\ref{meq}), only one term survives. Furthermore, since $|q|<1$, it is clear from Theorem \ref{ver} that
$$
\lim_{\lambda \to \infty} V_{J}(\aa,\zz q^{\lambda}) = 1
$$
So taking the limit as $\lambda \to \infty$ in (\ref{meq}) gives
$$
M_{I,K} \Phi((q-\hbar)T_K^{1/2}X)=0
$$
Since the zeros and poles of the latter term are both isolated, this implies that 
$$
M_{I,K}=0
$$
Repeating this argument inductively implies that $M_{I,J}=0$ whenever $I\neq J$, which yields the result.
\end{proof}

This Corollary is precisely what is needed to prove the existence of the so-called duality interface, see \cite{KS2}. The elliptic stable envelopes of $X$ and $X^{!}$ glue to give an elliptic cohomology class on the product $X\times X^{!}$ which restricts to the elliptic stable envelopes of each. In the language of \cite{KS2}, Theorem \ref{mainthm} says that the correspondence given by the duality interface maps the vertex functions of $X$ to the vertex functions of $X^{!}$.

The proof of Corollary \ref{stabms} in \cite{MirSym2} relies on very special combinatorial properties of the elliptic weight functions which do not hold in general. In contrast, we expect that the above proof can be generalized to show that the 3d mirror symmetry of vertex functions implies the 3d mirror symmetry of elliptic stable envelopes.

\printbibliography

@online{pcmilect,
      author         = "Okounkov, Andrei",
      title          = "{Lectures on K-theoretic computations in enumerative
                        geometry}",
      year           = "2015",
      eprint         = "1512.07363",
      archivePrefix  = "arXiv",
      primaryClass   = "math.AG",
      SLACcitation   = "%%CITATION = ARXIV:1512.07363;%%"
}

@preamble{
   "\def\cprime{$'$} "
}

@online{OS,
      author         = "Okounkov, Andrei and Smirnov, Andrey",
      title          = "{Quantum difference equation for Nakajima varieties}",
      year           = "2016",
      eprint         = "1602.09007",
      archivePrefix  = "arXiv",
      primaryClass   = "math-ph",
    
}

@ARTICLE{OkBethe,
   author = {{Aganagic}, Mina and {Okounkov}, Andrei},
    title = "{Quasimap counts and Bethe eigenfunctions}",
  journal = {Mosc. Math. J.},
 keywords = {Mathematical Physics, High Energy Physics - Theory, Mathematics - Algebraic Geometry, Mathematics - Representation Theory},
     year = 2017,
     volume = 17,
     pages = {565-600},
   adsurl = {http://adsabs.harvard.edu/abs/2017arXiv170408746A},
  adsnote = {Provided by the SAO/NASA Astrophysics Data System}
}

@online{AOElliptic,
       author = {{Aganagic}, Mina and {Okounkov}, Andrei},
        title = "{Elliptic stable envelopes}",
      journal = {arXiv e-prints},
     keywords = {Mathematics - Algebraic Geometry, High Energy Physics - Theory, Mathematical Physics, Mathematics - Representation Theory},
         year = "2016",
        month = "4",
          eid = {arXiv:1604.00423},
archivePrefix = {arXiv},
       eprint = {1604.00423v4},
 primaryClass = {math.AG},
       adsurl = {https://ui.adsabs.harvard.edu/abs/2016arXiv160400423A},
      adsnote = {Provided by the SAO/NASA Astrophysics Data System}
}

@incollection {GinzburgLectures,
    AUTHOR = {Ginzburg, Victor},
     TITLE = {Lectures on {N}akajima's quiver varieties},
 BOOKTITLE = {Geometric methods in representation theory. {I}},
    SERIES = {S\'emin. Congr.},
    VOLUME = {24},
     PAGES = {145--219},
 PUBLISHER = {Soc. Math. France, Paris},
      YEAR = {2012},
   MRCLASS = {14L24 (16G20 17B67)},
  MRNUMBER = {3202703},
MRREVIEWER = {Xueqing Chen},
}

@article{SmirnovElliptic,
author = {{Smirnov}, Andrey},
year = {2019},
month = {12},
pages = {},
title = {Elliptic stable envelope for Hilbert scheme of points in the plane},
volume = {26},
journal = {Selecta Math.}
}

@article{ell1,
	AUTHOR = {Ganter, Nora},
	TITLE = {The elliptic {W}eyl character formula},
	JOURNAL = {Compos. Math.},
	FJOURNAL = {Compositio Mathematica},
	VOLUME = {150},
	YEAR = {2014},
	NUMBER = {7},
	PAGES = {1196--1234},
	ISSN = {0010-437X},
	MRCLASS = {55N34 (19L47 22E67 55N91)},
	MRNUMBER = {3230851},
	MRREVIEWER = {Markus Szymik}
}

@article{RTV,
author = {Rimányi, R. and Tarasov, V. and Varchenko, A.},
year = {2017},
month = {05},
pages = {},
title = {Elliptic and K-theoretic stable envelopes and Newton polytopes},
volume = {25},
journal = {Selecta Mathematica}
}

@article {kirv,
	AUTHOR = {McGerty, Kevin and Nevins, Thomas},
	TITLE = {Kirwan surjectivity for quiver varieties},
	JOURNAL = {Invent. Math.},
	VOLUME = {212},
	YEAR = {2018},
	NUMBER = {1},
	PAGES = {161--187}
}

@article{MirSym2,
	author = {{Rim{\'a}nyi}, R. and {Smirnov}, A. and {Varchenko}, A. and {Zhou}, Z.},
	title = "{Three dimensional mirror self-symmetry of the cotangent bundle of the full flag variety}",
	journal = {SIGMA},
	volume = {15},
	keywords = {Mathematics - Algebraic Geometry, High Energy Physics - Theory, Mathematical Physics, Mathematics - Representation Theory},
	year = "2019",
	month = 11,
	pages = {1-22}
}

@article{qm,
title = {Stable quasimaps to GIT quotients},
journal = {J. Geom. Phys.},
volume = {75},
pages = {17 - 47},
year = {2014},
author = {Ionuţ Ciocan-Fontanine and Bumsig Kim and Davesh Maulik},
keywords = {Stable maps, Moduli spaces, Stable quotients, ADHM sheaves, Toric compactifications, Quotient stacks},
}

@online{KorZet,
	author = {{Koroteev}, Peter and {Zeitlin}, Anton M.},
	title = "{qKZ/tRS Duality via Quantum K-Theoretic Counts}",
	journal = {arXiv e-prints},
	keywords = {Mathematics - Algebraic Geometry, High Energy Physics - Theory, Mathematical Physics, Mathematics - Quantum Algebra, Mathematics - Representation Theory},
	year = "2018",
	month = 2,
	eid = {arXiv:1802.04463},
	pages = {arXiv:1802.04463},
	archivePrefix = {arXiv},
	eprint = {1802.04463},
	primaryClass = {math.AG},
	adsurl = {https://ui.adsabs.harvard.edu/abs/2018arXiv180204463K},
	adsnote = {Provided by the SAO/NASA Astrophysics Data System}
}

@online{tRSKor,
       author = {{Koroteev}, Peter},
        title = "{A-type Quiver Varieties and ADHM Moduli Spaces}",
      journal = {arXiv e-prints},
     keywords = {Mathematics - Algebraic Geometry, High Energy Physics - Theory, Mathematical Physics, Mathematics - Quantum Algebra, Mathematics - Representation Theory},
         year = "2018",
        month = "5",
          eid = {arXiv:1805.00986},
        pages = {arXiv:1805.00986},
archivePrefix = {arXiv},
       eprint = {1805.00986},
 primaryClass = {math.AG},
       adsurl = {https://ui.adsabs.harvard.edu/abs/2018arXiv180500986K},
      adsnote = {Provided by the SAO/NASA Astrophysics Data System}
}

@online{NSmac,
       author = {{Noumi}, Masatoshi and {Shiraishi}, Jun'ichi},
        title = "{A direct approach to the bispectral problem for the Ruijsenaars-Macdonald q-difference operators}",
      journal = {arXiv e-prints},
     keywords = {Mathematics - Quantum Algebra},
         year = 2012,
        month = jun,
          eid = {arXiv:1206.5364},
        pages = {arXiv:1206.5364},
archivePrefix = {arXiv},
       eprint = {1206.5364},
 primaryClass = {math.QA},
       adsurl = {https://ui.adsabs.harvard.edu/abs/2012arXiv1206.5364N},
      adsnote = {Provided by the SAO/NASA Astrophysics Data System}
}

@online{KS2,
       author = {{Kononov}, Yakov and {Smirnov}, Andrey},
        title = "{Pursuing quantum difference equations II: 3D-mirror symmetry}",
      journal = {arXiv e-prints},
     keywords = {Mathematics - Algebraic Geometry, High Energy Physics - Theory, Mathematical Physics, Mathematics - K-Theory and Homology, Mathematics - Representation Theory},
         year = 2020,
        month = aug,
          eid = {arXiv:2008.06309},
        pages = {arXiv:2008.06309},
archivePrefix = {arXiv},
       eprint = {2008.06309},
 primaryClass = {math.AG},
       adsurl = {https://ui.adsabs.harvard.edu/abs/2020arXiv200806309K},
      adsnote = {Provided by the SAO/NASA Astrophysics Data System}
}

@online{mstoric,
       author = {{Smirnov}, Andrey and {Zhou}, Zijun},
        title = "{3d Mirror Symmetry and Quantum $K$-theory of Hypertoric Varieties}",
      journal = {arXiv e-prints},
     keywords = {Mathematics - Algebraic Geometry, High Energy Physics - Theory, Mathematical Physics},
         year = 2020,
        month = may,
          eid = {arXiv:2006.00118},
        pages = {arXiv:2006.00118},
archivePrefix = {arXiv},
       eprint = {2006.00118},
 primaryClass = {math.AG},
       adsurl = {https://ui.adsabs.harvard.edu/abs/2020arXiv200600118S},
      adsnote = {Provided by the SAO/NASA Astrophysics Data System}
}

@online{dinkms1,
       author = {{Dinkins}, Hunter},
        title = "{Symplectic Duality of $T^*Gr(k,n)$}",
      journal = {arXiv e-prints},
     keywords = {Mathematics - Algebraic Geometry, Mathematical Physics, Mathematics - Quantum Algebra, Mathematics - Representation Theory},
         year = 2020,
        month = aug,
          eid = {arXiv:2008.05516},
        pages = {arXiv:2008.05516},
archivePrefix = {arXiv},
       eprint = {2008.05516},
 primaryClass = {math.AG},
       adsurl = {https://ui.adsabs.harvard.edu/abs/2020arXiv200805516D},
      adsnote = {Provided by the SAO/NASA Astrophysics Data System}
}

@ARTICLE{coul2,
       author = {{Braverman}, Alexander and {Finkelberg}, Michael and {Nakajima}, Hiraku},
        title = "{Towards a mathematical definition of Coulomb branches of $3$-dimensional $\mathcal N=4$ gauge theories, II}",
      journal = {arXiv e-prints},
     keywords = {Mathematics - Representation Theory, High Energy Physics - Theory, Mathematical Physics, Mathematics - Algebraic Geometry, Mathematics - Differential Geometry},
         year = 2016,
        month = jan,
          eid = {arXiv:1601.03586},
        pages = {arXiv:1601.03586},
archivePrefix = {arXiv},
       eprint = {1601.03586},
 primaryClass = {math.RT},
       adsurl = {https://ui.adsabs.harvard.edu/abs/2016arXiv160103586B},
      adsnote = {Provided by the SAO/NASA Astrophysics Data System}
}

@online{nakcoul,
       author = {{Nakajima}, Hiraku},
        title = "{Introduction to a provisional mathematical definition of Coulomb branches of $3$-dimensional $\mathcal N=4$ gauge theories}",
      journal = {arXiv e-prints},
     keywords = {Mathematics - Representation Theory, High Energy Physics - Theory, Mathematical Physics, Mathematics - Algebraic Geometry, Mathematics - Differential Geometry},
         year = 2017,
        month = jun,
          eid = {arXiv:1706.05154},
        pages = {arXiv:1706.05154},
archivePrefix = {arXiv},
       eprint = {1706.05154},
 primaryClass = {math.RT},
       adsurl = {https://ui.adsabs.harvard.edu/abs/2017arXiv170605154N},
      adsnote = {Provided by the SAO/NASA Astrophysics Data System}
}

@Inbook{ellqg,
author="Konno, Hitoshi",
title="Related Geometry",
bookTitle="Elliptic Quantum Groups: Representations and Related Geometry",
year="2020",
publisher="Springer Singapore",
address="Singapore",
pages="87--102",
% isbn="978-981-15-7387-3",
% doi="10.1007/978-981-15-7387-3_9",
% url="https://doi.org/10.1007/978-981-15-7387-3_9"
}

@article{CDZ,
    author = "Crew, Samuel and Dorey, Nick and Zhang, Daniel",
    title = "{Factorisation of 3d $ \mathcal{N} $ = 4 twisted indices and the geometry of vortex moduli space}",
    primaryClass = "hep-th",
    journal = "J. High Energ. Phys.",
    month = "08",
    year = "2020"
}

\newpage

\noindent
Hunter Dinkins\\
Department of Mathematics,\\
University of North Carolina at Chapel Hill,\\
Chapel Hill, NC 27599-3250, USA\\
hdinkins@live.unc.edu 

\end{document}